\numberwithin{equation}{section}
\theoremstyle{definition}
\newtheorem{definition}{Definition}[section]
\newtheorem*{definition*}{Definition}
\newtheorem*{thm*}{Theorem}
\newtheorem{remark}[definition]{Remark}
\theoremstyle{plain}
\newtheorem{theorem}[definition]{Theorem}
\newtheorem{proposition}[definition]{Proposition}
\newtheorem{lemma}[definition]{Lemma}
\newcommand{\cB}{\mathcal{B}}
\newcommand{\per}[2]{\mathrm{P}\!\left(#1;#2\right)}
\newcommand{\bPhi}{\boldsymbol{\Phi}}
\newcommand{\rel}{\partial_{\mathrm{rel}}}
\newcommand{\cH}{{\mathcal{H}}}
\newcommand{\R}{\mathbb{R}}
\newcommand{\bbS}{\mathbb{S}}
\newcommand{\sing}{\mathrm{sing}}
\newcommand{\reg}{\mathrm{reg}}
\newcommand{\spt}{\mathrm{spt}}
\newcommand{\rmd}{\mathrm{d}}
\newcommand{\sdist}{\rmd_{\omega}}
\newcommand{\aniEn}{\bar{\nu}_H^\Phi}
\newcommand{\bU}{\mathbf{U}}
\newcommand{\bp}{{\mathbf{p}}}
\newcommand{\cW}{\mathcal{W}}
\newcommand\res{\mathop{\hbox{\vrule height 7pt width .3pt depth 0pt
\vrule height .3pt width 5pt depth 0pt}}\nolimits}
\newcommand{\udens}[3]{\Theta^*_{#1}(#2,#3)}
\newcommand{\ldens}[3]{\Theta_*^{#1}(#2,#3)}
\newcommand{\dens}[3]{\Theta^{#1}(#2,#3)}
\newcommand{\var}{\mathrm{var}}
\newcommand{\reach}{\mathrm{reach}_{\omega}}
\newcommand{\Om}{\Omega}
\newcommand{\om}{\omega}
\newcommand{\na}{{\nabla}}
\newcommand{\pa}{{\partial}}
\newcommand{\PEn}{{\mathcal{E}_{\omega}}}
\renewcommand{\div}{{\text{div}}}
\newcommand{\nustar}{e_n}
\def\e{\epsilon}
\def\w{\omega}
\def\TVF{\mathfrak{X}_T(\overline{H})}
\def\Fv{\delta_{\bPhi}}
\def\@tocline#1#2#3#4#5#6#7{\relax
  \ifnum #1>\c@tocdepth 
  \else
    \par \addpenalty\@secpenalty\addvspace{#2}%
    \begingroup \hyphenpenalty\@M
    \@ifempty{#4}{%
      \@tempdima\csname r@tocindent\number#1\endcsname\relax
    }{%
      \@tempdima#4\relax
    }%
    \parindent\z@ \leftskip#3\relax \advance\leftskip\@tempdima\relax
    \rightskip\@pnumwidth plus4em \parfillskip-\@pnumwidth
    #5\leavevmode\hskip-\@tempdima
      \ifcase #1
       \or\or \hskip 1em \or \hskip 2em \else \hskip 3em \fi%
      #6\nobreak\relax
    \hfill\hbox to\@pnumwidth{\@tocpagenum{#7}}\par
    \nobreak
    \endgroup
  \fi}
\title{\bf{Rigidity of critical points of hydrophobic capillary functionals}}
\author{A. De Rosa, R. Neumayer, and R. Resende}
\begin{document}

\date{}

\begin{abstract}
We prove the rigidity, among sets of finite perimeter, 
of volume-preserving critical points of the capillary energy in the half space, in the case where the prescribed interior contact angle is between $90^\circ$  and $120^\circ$. No structural or regularity assumption is required on the finite perimeter sets.
Assuming that the ``tangential'' part of the capillary boundary is $\mathcal{H}^n$-null, this rigidity theorem extends to the full hydrophobic regime of interior contact angles between $90^\circ$  and $180^\circ$.
 Furthermore, we establish the anisotropic counterpart of this theorem under the assumption of lower density bounds.
\end{abstract}

\maketitle

\section{Introduction}

An equilibrated liquid droplet on a solid substrate exhibits capillarity: the cohesive forces driven by surface tension and adhesive forces between the liquid and solid substrate together cause the droplet to adhere to the substrate at a certain angle determined by Young's law. Analogous phenomena emerge for the equilibrated shapes of small solid particles, e.g. silver films \cite{WINTERBOTTOM1967303}, on a substrate.
Starting from the work of Gauss, the shapes of these equilibria have been modeled by critical points $E\subset \R^{n+1}$ ($n=2$ being the physical case) of free energy functionals subject to a volume constraint. In the absence of gravity and for a flat, homogeneous substrate $H:=\{x_{n+1}>0\}\subset \R^{n+1}$, the free energy functional {for a liquid droplet }is
\begin{align}\label{e: iso cap}
	 \PEn(E) := P(E; H) - \omega\,  P(E;\partial H).
\end{align}
Here, the adhesion coefficient $\omega \in (-1,1)$ is determined by the molecular makeup of the liquid and the substrate, and the relative perimeter $P(E; A)$ of a set of locally finite perimeter $E$, defined in Section~\ref{sec: prelimininaries}, is simply equal to $\mathcal{H}^n(\partial E \cap A)$ when the boundary $\partial E$ is sufficiently smooth.

 The goal of the paper is to classify {finite-volume} sets of finite perimeter $E$ in $\R^{n+1}$ that are critical points of $\PEn$---or of its anisotropic counterpart defined in \eqref{E:ani capillary energy SOFP}---with respect to volume-preserving variations. By this we mean $\frac{d}{dt}|_{t=0}\ \PEn(\Psi_t(E)) =0$ for every smooth one-parameter family $\{\Psi_t\}_{t\in (-\e,\e)}$ of diffeomorphisms on $\overline{H}$ such that $\Psi_t(\partial H) = \partial H$ with $|\Psi_t(E)| =|E|.$ This stationarity condition reads,  for some $\lambda>0$, 
\begin{equation}
	\label{eqn: isotropic stationary}
0 = \int_{\pa^* E \cap H} \div_E X - \omega  \int_{\pa^*E \cap \pa H} \div_H X  - \lambda \int_{\pa^*E \cap H} X \cdot \nu_E, \qquad \forall X \in \TVF.
\end{equation}
Here and in the sequel, $\TVF$ is set of vector fields $X \in C^1_c(\overline{H}, \R^{n+1})$ with $X\cdot\nu_H = 0$ on $\pa H$, $\nu_H = -e_{n+1}$ is the outer unit normal to $H$, and $\partial^* E$ is the reduced boundary of $E$ (see Section~\ref{sec: prelimininaries}).
If $E$ satisfies \eqref{eqn: isotropic stationary} and the relative boundary $\rel E := \overline{\partial E\cap H}$ is a smooth hypersurface with smooth boundary $\Gamma :=  \rel E \cap \partial H$, then $\partial E \cap H$ has constant mean curvature
and Young's law holds: $\rel E$ meets $\partial H$ with constant interior contact angle $\theta= \arccos \omega$.

Our first main result is the following unconditional classification of critical points of \eqref{e: iso cap} 
{in the regime $\omega \in (-1/2, 0]$}.

\begin{theorem}\label{T:Alexandrov:Isotropic}
Let $\omega \in (-{1/2},0]$ and $E\subset H$ be a set of finite perimeter with finite volume satisfying \eqref{eqn: isotropic stationary}. Then $E$ is a finite union of open balls and intersections with $H$ of open balls meeting $\partial H$ with interior contact angle $\arccos(\omega)$, all pairwise disjoint and with the same radii.
\end{theorem}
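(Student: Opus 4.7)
The plan is to adapt the Delgadino--Maggi weak Heintze--Karcher framework (as sharpened by recent work of De Rosa--Kolasinski--Santilli and related authors for distributional mean-curvature bounds on sets of finite perimeter) to the capillary setting in the half-space. The overall structure is: extract from the stationarity \eqref{eqn: isotropic stationary} both a constant generalized mean curvature on $\partial^* E \cap H$ and a weak Young's angle condition on $\partial^* E \cap \partial H$; prove a weak capillary Heintze--Karcher inequality; observe that the stationarity condition itself yields the matching Minkowski identity; deduce equality, hence rigidity.

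For Step 1, testing \eqref{eqn: isotropic stationary} against general $X\in\TVF$ shows that the associated varifold is stationary with Lagrange multiplier $\lambda$ relative to the capillary boundary term, so the generalized mean curvature on $\partial^*E\cap H$ equals $\lambda$ and the distributional trace of $\nu_E$ on $\partial^*E\cap\partial H$ has vertical component $-\omega$, that is, Young's law $\theta=\arccos\omega$ holds weakly. For Step 2, the position field $X(x)=x$ belongs to $\TVF$ because $x\cdot e_{n+1}=0$ on $\partial H$; using $\div_E X = n$ on $\partial^*E\cap H$, $\div_H X = n$ on $\partial H$, and the divergence theorem $\int_{\partial^*E\cap H}x\cdot\nu_E = (n+1)|E|$, \eqref{eqn: isotropic stationary} yields the capillary Minkowski identity
\begin{equation*}
(n+1)\,|E| \;=\; \tfrac{n}{\lambda}\bigl[\mathcal{H}^n(\partial^*E\cap H)\,-\,\omega\,\mathcal{H}^n(\partial^*E\cap\partial H)\bigr].
\end{equation*}

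For Step 3, set $r:=n/\lambda$ and aim to prove the reverse inequality
\begin{equation*}
(n+1)\,|E| \;\leq\; r\bigl[\mathcal{H}^n(\partial^*E\cap H)\,-\,\omega\,\mathcal{H}^n(\partial^*E\cap\partial H)\bigr].
\end{equation*}
Define the inscribed-ball map $\Phi(y,t)=y-t\nu_E(y)$ for $y\in\partial^*E\cap H$ and $t\in[0,\tau(y)]$, where $\tau(y)\in[0,r]$ is the largest parameter for which the open ball $B_t(y-t\nu_E(y))$ is contained in $E$. The analytic core is to show that $\Phi$ surjects onto $E$ modulo $\mathcal H^{n+1}$-null sets; once this is done, the area formula together with the pointwise Jacobian bound $\prod_{i}(1-t\kappa_i)_+$ and the constraint $\sum\kappa_i=\lambda$ yields, after AM--GM on the principal-curvature directions, the displayed Heintze--Karcher inequality. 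The capillary boundary term $-\omega\,\mathcal{H}^n(\partial^*E\cap\partial H)$ arises by carefully accounting for the segments $\Phi(y,\cdot)$ that terminate on $\partial H$, using the weak Young's angle from Step 1. Combining with Step 2 forces equality; rigidity of AM--GM gives $\kappa_i(y)=1/r$ for $\mathcal H^n$-a.e.\ $y$, and rigidity of surjectivity gives $\tau(y)=r$ everywhere. Hence each connected piece of $\partial^*E\cap H$ is a piece of a round sphere of radius $r$, which either closes up (interior ball) or terminates on $\partial H$ at angle $\arccos\omega$ by Young's condition (spherical cap). Disjointness and the common radius $r=n/\lambda$ follow, together with finiteness from the volume constraint.

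The main obstacle is the surjectivity and Jacobian analysis of $\Phi$ in Step 3. The hypothesis $\omega\in(-1/2,0]$ (contact angle in $[\pi/2,2\pi/3)$) is exactly what is needed so that the extremal capillary spherical cap of radius $r$ contains no tangential piece on $\partial H$, i.e.\ no subset of $\partial^*E\cap\partial H$ where $\nu_E$ is parallel to $\partial H$; for $\omega<-1/2$ the cap exceeds a hemisphere and such a tangential stratum can appear, which is precisely why the abstract's broader result for $\omega\in(-1,0)$ requires the auxiliary $\mathcal{H}^n$-null tangential hypothesis. In the regime $\omega\in(-1/2,0]$ this pathology is ruled out geometrically, so both the covering property of $\Phi$ and the boundary accounting in the Jacobian estimate go through unconditionally, and the argument closes.
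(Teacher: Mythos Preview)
Your overall scheme (Minkowski identity plus a weak capillary Heintze--Karcher inequality, forced to equality, then rigidity) is the same architecture the paper uses. But two of your steps hide the real difficulties.

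The main gap is in Step~1/Step~3. You assert that stationarity gives ``weak Young's law'' and that this suffices for the boundary accounting in the covering argument. It does not: integrating \eqref{eqn: isotropic stationary} by parts only yields a distributional identity along $\Gamma$, not pointwise angle control, and what the covering argument actually consumes is a pointwise statement at \emph{touching} points --- whenever an inscribed ball touches $\rel E$ at $p\in\Gamma$, the contact angle must satisfy Young's inequality (or else $p\in T_E$). The paper isolates this as a \emph{viscosity subsolution} to Young's law and devotes Sections~\ref{S: rule out inf dens}--\ref{sec: stat to visc} to proving it (Theorem~\ref{thm: stationary implies viscosity}) via blowup analysis: the touching set forces the blowup into a wedge, one shows there is a blowup consisting of finitely many half-hyperplanes (Proposition~\ref{P: flat BU under fin dens}), and stationarity of that blowup yields a contradiction if the angle were too large. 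This is the technical heart of the paper, and it is here --- not in the Heintze--Karcher step --- that the hydrophobic restriction $\omega\le 0$ is actually used (see Section~\ref{R:small-contact-angle-regime}). Relatedly, the paper runs Heintze--Karcher with the $\omega$-\emph{shifted} normal map $\zeta(x,t)=x-t(\nu_E(x)+\omega\,\aniEn)$ rather than your unshifted $\Phi$; the shift builds the $-\omega$ term directly into the Jacobian \eqref{E:Jacobian zeta}, so that the covering claim \eqref{E:covering-property-zeta} becomes a clean consequence of the viscosity condition together with $\mathcal{H}^n(T_E)=0$, rather than an ad hoc ``accounting for segments that terminate on $\partial H$.''

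Your explanation of the threshold $\omega>-1/2$ is also wrong. The model cap exceeds a hemisphere for \emph{every} $\omega<0$; nothing about its geometry singles out $-1/2$, and for the model cap no point of $\Gamma$ ever lies in $T_E$. The actual mechanism is Lemma~\ref{L:zerotouch}: at a putative $\mathcal{H}^n$-density-one point of $\Gamma$, the Kagaya--Tonegawa modified density computes to $-2\omega$, while upper-semicontinuity from nearby interior reduced-boundary points (where the modified density equals $1$) forces it to be $\ge 1$. These are incompatible exactly when $\omega>-1/2$, giving $\mathcal{H}^n(\Gamma)=0$ and hence $\mathcal{H}^n(T_E)=0$. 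That is the only place the strict bound $-1/2$ appears.
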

We always choose a representative for a set of finite perimeter with $\partial E = \overline{\partial^*E}$. If $|{\partial E}|=0$, which in particular holds for $E$ as in Theorem~\ref{T:Alexandrov:Isotropic} (as discussed in Remark~\ref{r:openrep}), then we choose a canonical representative of $E$ by an open set, see Section~\ref{sec: prelimininaries}. 
It is well known, since the work \cite{Delaunay1841} of Delaunay, that the finite volume assumption for $E$ is necessary for the validity of Theorem~\ref{T:Alexandrov:Isotropic}. Indeed, there are several examples of non-compact surfaces with constant mean curvature. They can be easily used to produce non-compact  capillary surfaces satisfying \eqref{eqn: isotropic stationary}.

The restriction $\omega >-1/2$ {in Theorem~\ref{T:Alexandrov:Isotropic}} is used only to prove that the set of points 
\begin{equation}\label{D:tangential-touching}
    T_E:= \left\{ p\in \Gamma\cap\partial^* E: \nu_E(p) = \nu_H\right\}
\end{equation} 
in $\Gamma$ with tangent plane equal to $\partial H$ has $\mathcal{H}^n$ measure zero. We in fact show that $\mathcal{H}^n(\Gamma)=0$ in this case, see Lemma~\ref{L:zerotouch}. {The set $T_E$ arises} naturally in the context of rigidity of critical points of \eqref{e: iso cap}, since for instance a ball in $H$ meeting $\partial H$ tangentially is a stationary configuration, though not a minimizer, for this problem. If one simply assumes $\mathcal{H}^n(T_E)=0$, then Theorem~\ref{T:Alexandrov:Isotropic} generalizes to the full hydrophobic regime $\omega \in (-1,0]$: 
\begin{theorem}\label{T:new theorem}
    Let $\omega \in (-1,0]$ and let $E \subset H$ be a set of finite perimeter with finite volume satisfying \eqref{eqn: isotropic stationary} and $\mathcal{H}^n(T_E) = 0$. 
Then the conclusion of Theorem~\ref{T:Alexandrov:Isotropic} holds.
\end{theorem}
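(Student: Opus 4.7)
The plan is to observe that the proof of Theorem~\ref{T:Alexandrov:Isotropic} uses the assumption $\omega > -1/2$ only to deduce $\mathcal{H}^n(T_E) = 0$, via Lemma~\ref{L:zerotouch}, as highlighted by the paragraph following the statement of Theorem~\ref{T:Alexandrov:Isotropic}. Since $\mathcal{H}^n(T_E)=0$ is now an explicit hypothesis, the strategy is to re-run that same argument and verify that every subsequent step remains valid for $\omega \in (-1,0]$ once the tangential-touching set is known to be $\mathcal{H}^n$-negligible.

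Concretely, the approach is a Heintze-Karcher / Montiel-Ros argument adapted to sets of finite perimeter, in the spirit of recent capillary Alexandrov theorems. From \eqref{eqn: isotropic stationary} tested against vector fields compactly supported in $H$, standard regularity theory gives that $\partial^* E \cap H$ is a smooth constant mean curvature hypersurface away from a singular set of codimension at least $7$. Because $\omega \in (-1,0]$, the contact angle $\theta = \arccos \omega$ lies in $[\pi/2,\pi)$, so the natural capillary ball passing through $y \in \partial^* E \cap H$ with outer unit normal $\nu_E(y) \neq \nu_H$ is either entirely contained in $\overline{H}$ or meets $\partial H$ in a spherical cap forming angle $\theta$ with $\partial H$. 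Following the inward normal $-\nu_E$ up to the first focal time of the capillary exponential map and applying the area formula yields a capillary Heintze-Karcher inequality bounding $|E|$ above by an integral proportional to $\int_{\partial^* E \cap H} (1 - \omega\, \nu_E \cdot \nu_H)\, d\mathcal{H}^n$ divided by the constant mean curvature. Testing \eqref{eqn: isotropic stationary} against the position field $X(x) = x$ supplies the reverse comparison as an identity, forcing equality throughout. The equality case then identifies each connected component of the open representative of $E$ as either a ball in $H$ or a spherical cap meeting $\partial H$ at contact angle $\theta$, all of common radius and pairwise disjoint, which is the stated conclusion.

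The role of $\mathcal{H}^n(T_E)=0$ is to guarantee that the capillary exponential map, the first focal time, and the relevant change-of-variables Jacobian are non-degenerate $\mathcal{H}^n$-a.e.\ on $\partial^* E \cap H$: points of $T_E$ are exactly the boundary points where the prescribed capillary sphere would be tangent to $\partial H$ rather than forming angle $\theta$, and where both the pointwise Heintze-Karcher comparison and the identification of the reach near $\Gamma$ break down. The main obstacle is therefore the measure-theoretic setup itself: running the Heintze-Karcher / Montiel-Ros machinery for a general set of finite perimeter, with no a priori regularity of $\rel E$ near $\partial H$ and only $\mathcal{H}^n(T_E)=0$ available, as opposed to the stronger $\mathcal{H}^n(\Gamma)=0$ coming from Lemma~\ref{L:zerotouch} when $\omega > -1/2$. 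Since this step is already carried out in the proof of Theorem~\ref{T:Alexandrov:Isotropic} using only $\mathcal{H}^n(T_E)=0$ as an input on the tangential stratum, no additional difficulty arises in extending the conclusion to the range $\omega \in (-1, -1/2]$.
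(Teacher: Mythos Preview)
Your proposal is correct and matches the paper's own argument: the proof of Theorem~\ref{T:Alexandrov:Isotropic} reduces to verifying the hypotheses of Theorem~\ref{T:Alexandrov:Anisotropic}, and the only place the bound $\omega>-1/2$ enters is in deducing $\mathcal{H}^n(T_E)=0$ via Lemma~\ref{L:zerotouch}, which is now assumed. One minor remark: the machinery in Theorem~\ref{thm: HK} and Theorem~\ref{T:Alexandrov:Anisotropic} only ever uses $\mathcal{H}^n(T_E)=0$ (assumption~\eqref{A:RegAnis:tang touching pts}), not the stronger $\mathcal{H}^n(\Gamma)=0$, so the potential obstacle you flag in the last paragraph does not actually arise.
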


This rigidity theorem was previously known for restricted classes of sets $E$ satisfying certain  regularity assumptions. Classical work of Wente \cite{wente1980symmetry} characterized critical points of the capillary energy, for the full range $\omega \in (-1,1)$, assuming  $\rel E$ and $\Gamma$ are smooth. More recently,
Xia-Zhang \cite{xia2023alexandrov} were able to extend Wente's result to sets $E$ such that the capillary boundary $\Gamma$ is smooth and the singular set of $\rel E$  is $\cH^{n-1}$-null.
These smoothness assumptions are needed in order to be able to integrate by parts and to deduce the pointwise validity of Young law from the criticality condition {\eqref{eqn: isotropic stationary}}.  Moreover, smoothness is also used to analyze the structure of blowups at every capillary boundary point as described in \cite[Remark 1.6]{xia2023alexandrov}.

Theorem~\ref{T:Alexandrov:Isotropic} is the first rigidity theorem {in the literature} for the capillary problem that applies unconditionally to the whole class of sets of finite perimeter. To achieve this, we first prove {the rigidity} for sets of  finite perimeter that {are subsolutions to} Young's law in a viscosity sense. 
Then, we show that any set satisfying the distributional stationarity condition {\eqref{eqn: isotropic stationary} } already {is a subsolution to} Young's law in a viscosity sense by using geometric measure theoretic arguments to characterize the blow-ups at capillary boundary points where $E$ is touched from the interior {by a smooth set $G$}. 
A key point here is that we can leverage the touching set {$G$} to prove regularity at such points.

Our third main theorem is Theorem~\ref{T:Alexandrov:Anisotropic} {below}, which is an anisotropic counterpart of Theorem~\ref{T:new theorem}. In the anisotropic setting, rigidity among sets $E$ such that  $\rel E$ and $\Gamma$ are both smooth was shown in \cite{jia2023alexandrov}.
As explained in the previous paragraph, blowup analysis is one of the crucial tools to remove {such smoothness} assumptions. Thus, our extension of Theorem~\ref{T:new theorem} to the anisotropic setting may be surprising, as critical points of anisotropic functionals famously lack a monotonicity formula \cite{characterizationareaallard}, meaning that generally speaking blowups may not exist, and when they do, they need not be cones. 

More concretely, let $\Phi \in C^{2,\alpha}(\R^{n+1}\setminus \{0\}, \R_+)$ be an elliptic integrand; see Section~\ref{sec: prelimininaries} for the precise definition. Its associated surface energy $\int_{\partial^*E} \Phi(\nu_E) \, \rmd \mathcal{H}^{n-1}$, used in the modeling of solid crystals with sufficiently small grains \cite{Herring}, is minimized among sets in $\R^{n+1}$ of a fixed volume by translations and dilations of the Wulff shape $\cW_1 \subset \R^{n+1}$.  
When a particle lies on a flat, homogeneous substrate in the absence of gravity, equilibria are modeled by critical points of the free energy functional 
\begin{equation}\label{E:ani capillary energy SOFP}
	\bPhi_{\omega}(E) := \int_{\partial^* E\cap H}\Phi(\nu_E)\,\rmd \cH^n - \omega P(E;\partial H).
\end{equation}
Here $\omega\in(-\Phi(\nu_H) , \Phi(\nu_H))$ is again determined by the makeup of the solid and the substrate.
A set of finite perimeter $E\subset H $ with finite volume is  a critical point of \eqref{E:ani capillary energy SOFP} under volume preserving variations if and only if it satisfies the distributional condition 
\begin{equation}\label{eqn: aniso isotropic stationary}
    0 = \int_{\pa^* E \cap H} B_{\bPhi}(\nu_E ) :DX\,\rmd \cH^n  - \omega  \int_{\pa^*E \cap \pa H} \div_H X\,\rmd \cH^n   - \lambda \int_{\pa^*E \cap H} X \cdot \nu_E\,\rmd \cH^n,
\end{equation}
for some $\lambda>0$ and for every $X \in \TVF$. Here $B_\Phi(\nu) := \Phi(\nu) \text{Id}  -\nu \otimes \nabla\Phi(\nu)$ for $\nu \in \bbS^n$.
In {the} case $E$ is a smooth set satisfying \eqref{E:ani capillary energy SOFP}, we can integrate by parts to show that $E$ has constant anisotropic mean curvature  $H_E^\Phi(x) = \lambda$ for $x \in \partial E\cap H$ and the Cahn-Hoffman normal $\nu_E^\Phi(x) := \nabla \Phi (\nu_E(x))$ to $E$ satisfies the anisotropic form of Young's law:
\begin{equation}\label{eqn: anisotropic Young's law}
    \nu_E^\Phi(x) \cdot\nu_H = -{\omega}, \quad \text{ for } x \in \Gamma .
\end{equation}
Among sets of finite perimeter in $\R^{n+1}$ of a fixed volume, the energy $\bPhi_\omega$ is minimized by a Winterbottom shape of radius $r>0$:
\[
 \cW_r^{\omega} := H\cap \left( \cW_r + {r}\omega\nu_H\right) + p,
\]
where $\cW_r = r\cW_1$ and $p \in\partial H$.

\begin{theorem}\label{T:Alexandrov:Anisotropic}
Let $\Phi \in C^{2,\alpha}(\R^{n+1}\setminus \{0\}, \R_+)$ be 
an elliptic integrand and $\omega \in (-\Phi(\nu_H), 0]$. Assume that $E\subset H$ is a  finite perimeter set with finite volume that satisfies \eqref{eqn: aniso isotropic stationary},
\begin{align}	
	\cH^n\left(T_E\right) &= 0,\label{A:RegAnis:tang touching pts}\\
    \cH^n\left(\left(\partial E\setminus \partial^* E\right)\cap H\right) &= 0,\label{A:RegAnis:Intro}
\end{align}
and for some $\epsilon>0$,
\begin{align}
   \frac{\mathcal{H}^{n}(\partial^* E \cap B_r(x))}{\om_n r^n} &\geq \epsilon \text{ for  $\mathcal{H}^n$- a.e. } x \in B_\epsilon(\Gamma) \cap \partial^*E\cap H \text{ and }r<x_{n+1}. \label{eqn: lower density assumption intro 1}
\end{align}
Then $E$ is a finite union of $\omega$-Winterbottom shapes and Wulff shapes, all pairwise disjoint and with the same radii.
\end{theorem}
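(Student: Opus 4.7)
The plan is to follow the two-step strategy used for Theorems \ref{T:Alexandrov:Isotropic} and \ref{T:new theorem}, adapting it to the anisotropic setting where the missing monotonicity formula is compensated by the lower density bound \eqref{eqn: lower density assumption intro 1} and the interior regularity assumption \eqref{A:RegAnis:Intro}.

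\emph{Step 1: rigidity among viscosity subsolutions.} I would first establish an anisotropic analogue of the rigidity theorem for sets of finite perimeter that are viscosity subsolutions to the anisotropic Young's law \eqref{eqn: anisotropic Young's law}, meaning that at every capillary boundary point $p\in \Gamma$, whenever $E$ is touched from the interior by a smooth set $G$, the set $G$ must obey Young's law at $p$ with contact angle no larger than the prescribed one. The argument would combine interior regularity of $\partial^*E\cap H$ as a hypersurface of constant anisotropic mean curvature $\lambda$---obtained from \eqref{eqn: aniso isotropic stationary}, the regularity assumption \eqref{A:RegAnis:Intro}, the density lower bound, and an anisotropic Allard-type regularity theory for elliptic integrands---with an anisotropic Heintze--Karcher--Ros-type inequality valid in the hydrophobic regime $\omega \le 0$, whose equality case characterizes unions of $\omega$-Winterbottom shapes and Wulff shapes of equal radius. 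The sign restriction $\omega\le 0$ is essential for the boundary contribution in this inequality to have the correct sign, and the viscosity subsolution hypothesis is what allows the inequality to be set up without pointwise validity of \eqref{eqn: anisotropic Young's law} on $\Gamma$.

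\emph{Step 2: criticality implies viscosity subsolution.} I would then show, by contradiction, that any $E$ satisfying the hypotheses of the theorem is automatically a viscosity subsolution. If at some $p\in\Gamma$ the set $E$ were touched from inside by a smooth $G$ violating the subsolution condition, then testing \eqref{eqn: aniso isotropic stationary} against an appropriate vector field supported near $p$ and aligned with the angular deficit should yield a contradiction with stationarity. Making this rigorous requires a blowup analysis of $E$ at $p$: the lower density bound \eqref{eqn: lower density assumption intro 1} prevents rescalings of $E$ from degenerating, the smooth touching set $G$ supplies anisotropic obstacle-type regularity for $E$ at $p$, and \eqref{A:RegAnis:tang touching pts} excludes tangential blowups. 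These ingredients together should furnish a nontrivial blowup limit which, by the anisotropic stationarity condition, is forced to be a half-hyperplane meeting $\partial H$ at the Young angle associated to $\omega$, from which the contradiction follows.

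\emph{Main obstacle.} The hard step is Step 2. In the isotropic setting, the monotonicity formula for stationary varifolds both guarantees existence of blowups and forces them to be cones, a cornerstone of the blowup analysis that is unavailable for anisotropic stationary integrands. The density assumption \eqref{eqn: lower density assumption intro 1} is precisely what is introduced to compensate: it provides the compactness needed to extract a nontrivial blowup limit, while \eqref{A:RegAnis:Intro} and the touching set $G$ provide the structure required to identify it. Carrying out this analysis carefully, and ruling out all non-half-hyperplane blowup geometries compatible with \eqref{eqn: aniso isotropic stationary}, will be the most delicate part of the argument.
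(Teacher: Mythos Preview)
Your overall two-step strategy---first proving a Heintze--Karcher rigidity for viscosity subsolutions, then showing criticality implies viscosity subsolution via blowup analysis at touching points---is precisely the paper's approach, and you have correctly identified the central difficulty (absence of monotonicity) and the roles of \eqref{A:RegAnis:Intro} and \eqref{A:RegAnis:tang touching pts}.

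Two places where your account diverges from what the paper actually does are worth flagging. First, you attribute the \emph{existence} of blowups at $p\in\Gamma$ to the lower density bound \eqref{eqn: lower density assumption intro 1}, but that assumption is a lower bound on interior points and does not by itself control $\Theta^*_n(V,p)$ at capillary boundary points. The paper proves finiteness of the upper density at such $p$ as a separate result (Theorem~\ref{T: finiteness density}): assuming infinite density, one rescales by mass, uses the touching set $G$ to confine the limit to a strict subwedge of the anisotropic wedge $W(\Phi,e)$, and then constructs a Solomon--White-type vector field in $\TVF$ that violates stationarity. The density bound \eqref{eqn: lower density assumption intro 1} is instead used to guarantee \emph{integrality} of $V_0\llcorner(H\times\bbS^n)$ via the anisotropic closure theorem of \cite{de2018minimization}. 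Second, the blowup $V_0$ is not a single half-hyperplane at the Young angle: it is shown (Proposition~\ref{P: flat BU under fin dens}, by an iterated nonlinear maximum-principle argument) to be a \emph{finite union} $\sum_i\theta_i\,\var(\Pi_{\eta_i})$ of half-hyperplanes, and the contradiction in Step~2 is obtained by testing with a radial vector field and reducing to the elementary inequality $\sum_i\theta_i\omega_i<\omega$, which fails precisely because $\omega\le 0$ and each $\omega_i<\omega$.
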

In \eqref{eqn: lower density assumption intro 1},  $B_\e(\Gamma)$ denotes a tubular neighborhood of radius $\e$ of $\Gamma$. We explicitly observe that in Theorem~\ref{T:Alexandrov:Anisotropic} we cover the full hydrophobic regime,  since for $\omega \leq -\Phi(\nu_H)$ the variational problem for \eqref{E:ani capillary energy SOFP} with a volume constraint is simply equivalent to the boundaryless case, i.e. the standard (anisotropic) isoperimetric problem, and $\omega=0$ corresponds to the free boundary case.  As stated above, among sets with $\rel E$ and $\Gamma$ smooth, this conclusion was shown in \cite{jia2023alexandrov}.

\begin{remark}\label{rmk:arearedundant2}
    For the area functional (i.e., $\Phi(x) \equiv |x|$),  the conditions \eqref{A:RegAnis:Intro} and \eqref{eqn: lower density assumption intro 1}, as well as \eqref{A:RegAnis:tang touching pts} for $\omega \in (-1/2,0]$, are consequences of the monotonicity formula.
\end{remark}

\begin{remark}
The analogue of assumption \eqref{A:RegAnis:Intro} is  the condition imposed by De Rosa-Kolasi{\'n}ski-Santilli in \cite{de2020uniqueness} to characterize the critical points of the anisotropic isoperimetric problem. This assumption, as well as \eqref{eqn: lower density assumption intro 1},  is needed due to the lack of lower density bounds for anisotropic stationary varifolds. Proving lower density bounds is one of the major open problems in the regularity theory for anisotropic minimal surfaces.
\end{remark}

\begin{remark}
    Theorem \ref{T:Alexandrov:Anisotropic} can be applied to characterize local minimizers of the anisotropic capillary problem on closed manifolds with boundary, in the small volume regime. The proof would follow the same line of arguments used for the local minimizers of the anisotropic isoperimetric problem in closed manifolds in \cite{DeRosaNeumayer2023local}, replacing the use of the rigidity theorem \cite[Corollary 6.8]{de2020uniqueness} with Theorem \ref{T:Alexandrov:Anisotropic}.
\end{remark}

\begin{remark}
    The characterization proved in Theorem \ref{T:Alexandrov:Anisotropic} could be applied in characterizing the critical points of the anisotropic $2$-bubble problem, under convexity assumption of the chambers, in the spirit of \cite{DeRosaTione2025}. Indeed in this case each chamber would be a critical point for an appropriate $\bPhi_{\omega}$, with respect to the hyperplane separating the two chambers.
\end{remark}

\subsection{Discussion of the proofs}

The proofs of Theorem~\ref{T:Alexandrov:Isotropic}, Theorem~\ref{T:new theorem}, and Theorem~\ref{T:Alexandrov:Anisotropic}  are based on proving a Heintze-Karcher type inequality following \cite{jia2022heintze, xia2023alexandrov, jia2023alexandrov}, which in turn are in the spirit of a general scheme introduced by Montiel and Ros \cite{montiel1991compact}.  This basic approach  has proved to be powerful for proving geometric rigidity results in the smooth case \cite{montiel1991compact, BrendleCMC, HeLiMaGe, WangSpacetime, jia2022heintze, jia2023alexandrov, Hyperbolic}, {and} consists of exploring the set $E$ by following the normal bundle.
Moreover, it has been understood that for problems without a capillary boundary, 
this proof scheme can be made to work for a critical point $E$ with potential singularities,  provided  $E$ has almost-everywhere regular boundary and has bounded mean curvature in a viscosity sense
\cite{delgadino2019alexandrov,de2020uniqueness, maggi2023rigidity}.

In the capillary setting, there have been several formulations of Heintze-Karcher type inequalities, see for instance \cite{delgadino2024heintze, jia2023heintze, jia2022heintze, xia2023alexandrov, jia2023alexandrov}.  Apart from \cite{xia2023alexandrov}, these results all deal with smooth sets.  The formulations in \cite{jia2022heintze, xia2023alexandrov, jia2023alexandrov} are more suitable for relaxing regularity assumptions. However, as already mentioned, previous approaches to relaxing regularity assumptions \cite{xia2023alexandrov} still require {enough smoothness} to integrate by parts to leverage Young's law in a classical sense. Nevertheless, we introduce a suitable notion of viscosity subsolution to Young's law,  and inspired by technical refinements of \cite{delgadino2019alexandrov, de2020uniqueness, maggi2023rigidity, xia2023alexandrov}, we prove a Heintze-Karcher inequality for these viscosity subsolutions in Theorem~\ref{thm: HK} and a corresponding rigidity statement in Section~\ref{S:proof of Alexandrov}.

In view of Theorem~\ref{thm: HK}, one of the main challenges and new contributions of the paper is to show that any set of finite perimeter $E$ that is a critical point of $\PEn$ or $\bPhi_{\omega}$ with respect to volume-preserving variations is also a viscosity subsolution to Young's law -- unconditionally in the case of $\PEn$ and under mild density assumptions in the case of $\bPhi_{\omega}$; see Theorem~\ref{thm: stationary implies viscosity} and Remark~\ref{rmk:arearedundant2} below.

We say an open set $G\subset \R^{n+1}$ with $C^1$ boundary touches $E \subset  H$ from the interior at a point $p \in \Gamma$ if $G\cap H\subset E$ and $p\in\pa G$.
 We say that $E$ is a {\it viscosity subsolution} to Young's law if, for any $p \in \Gamma$ and $C^1$ set $G$ that  touches $E$ from the interior at $p$,
        \[\mbox{either } \qquad p \in T_E \qquad \mbox{or }\qquad
        \nu^\Phi_G(p) \cdot \nu_H \leq -\omega\,.
        \]
{Recall that the set $T_E$ was defined in \eqref{D:tangential-touching} and observe that t}his definition makes sense in view of \eqref{eqn: anisotropic Young's law}.

\begin{theorem}\label{thm: stationary implies viscosity} 
Let $\Phi \in C^{2,\alpha}(\R^{n+1}\setminus \{0\}, \R_+)$ be an elliptic integrand, $\omega \in (-\Phi(\nu_H), 0]$, and $E \subset H$ be a set of finite perimeter satisfying, for some $\epsilon>0$, 
\begin{equation}\label{eqn: Lower Density}
    \frac{\mathcal{H}^{n}(\partial^* E \cap B_r(x))}{\om_n r^n} \geq \epsilon \text{ for all }x \in B_\epsilon(\Gamma) \cap \partial^*E \text{ and }r<x_{n+1}.
    \end{equation}
 If $E$ satisfies \eqref{eqn: aniso isotropic stationary}, then $E$ is a viscosity subsolution for Young's law.
\end{theorem}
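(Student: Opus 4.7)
The plan is to argue by contradiction. Suppose there exist a point $p\in\Gamma\setminus T_E$ and a $C^1$ set $G$ touching $E$ from the interior at $p$ with $\nu_G^\Phi(p)\cdot\nu_H>-\omega$. I would construct a test field $Y\in\TVF$ whose insertion into \eqref{eqn: aniso isotropic stationary} produces a strictly nonzero value, contradicting stationarity.

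The first step is a blow-up at $p$. Setting $E_r:=r^{-1}(E-p)$ and $G_r:=r^{-1}(G-p)$, the $C^1$-regularity of $G$ gives $G_r\cap H\to G_\infty\cap H$ in $L^1_{\mathrm{loc}}$, where $G_\infty:=\{x : x\cdot\nu_G(p)\leq 0\}$ is the half-space tangent to $\partial G$ at $p$. Assumption \eqref{eqn: Lower Density} together with standard compactness for finite perimeter sets yields a subsequence $r_k\downarrow 0$ such that $E_{r_k}\to E_\infty$ in $L^1_{\mathrm{loc}}$, with $G_\infty\cap H\subset E_\infty$ and (using $p\notin T_E$ to rule out tangential degeneration) $0\in\partial E_\infty$. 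Rescaling test fields, the surface integrals in \eqref{eqn: aniso isotropic stationary} are scale-invariant while the Lagrange volume term acquires a factor $r_k\to 0$, so $E_\infty$ satisfies the homogeneous identity
\[
\int_{\partial^* E_\infty\cap H} B_\Phi(\nu_{E_\infty}):DY\,\rmd\cH^n - \omega\int_{\partial^* E_\infty\cap\partial H}\div_{\partial H} Y\,\rmd\cH^n = 0 \qquad \forall\,Y\in\TVF.
\]

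Next I would design a compactly supported $Y\in\TVF$ near the origin that violates this homogeneous identity. The hypothesis $\nu_G^\Phi(p)\cdot\nu_H>-\omega$ means the wedge $G_\infty\cap H$ has a strict Young-law deficit at its contact edge $\partial G_\infty\cap\partial H$. I would take $Y:=\phi\,\xi$ with $\phi\in C^1_c(B_\rho(0))$ a cutoff and $\xi$ a constant vector satisfying $\xi\cdot\nu_H=0$, chosen so that the flow of $Y$ pushes $\partial G_\infty$ into the wedge interior. A direct computation on the smooth half-wedge $G_\infty\cap H$ reduces the corresponding first variation of $\bPhi_\omega$ to a cross-sectional integral over the contact edge, proportional to the Young-law defect $\nu_G^\Phi(p)\cdot\nu_H+\omega$; the sign of $\xi$ is fixed so that the result is strictly negative.

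The final step is to transfer this strict inequality from the smooth barrier $G_\infty\cap H$ to the possibly singular $E_\infty$. Here I would use the barrier containment $G_\infty\cap H\subset E_\infty$ and the ellipticity of $\Phi$: building $Y$ from an anisotropic calibration of $\partial G_\infty$ (the Cahn--Hoffman field associated with $\nu_G(p)$), convexity of $\Phi$ yields a pointwise comparison between $B_\Phi(\nu):DY$ for $\nu\in\bbS^n$ and $B_\Phi(\nu_{G_\infty}):DY$ on the directions relevant to $\partial^*E_\infty\setminus\partial G_\infty$, while the tangency of $Y$ to $\partial H$ keeps the wetted integral controlled. Integrating then bounds the first variation of $E_\infty$ above by the strictly negative quantity computed for $G_\infty\cap H$, contradicting the homogeneous identity. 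The hardest step is exactly this last transfer: designing $Y\in\TVF$ to be simultaneously tangent to $\partial H$, a strict calibrator of $\partial G_\infty$, and a pointwise dominator of $B_\Phi(\nu):DY$ for arbitrary $\nu$. The balance is delicate because the anisotropic setting precludes monotonicity-based control of $E_\infty$ (cf.\ Remark~\ref{rmk:arearedundant2}), so the lower density assumption \eqref{eqn: Lower Density} --- used both to ensure the blow-up is nontrivial and to keep $0\in\partial E_\infty$ --- is essential.
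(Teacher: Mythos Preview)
Your proposal has a genuine gap at two places, and the paper's route is substantially different.

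\textbf{First gap: existence and nature of the blow-up.} You assert that \eqref{eqn: Lower Density} plus standard compactness gives a subsequential limit $E_\infty$ satisfying the homogeneous first-variation identity. But \eqref{eqn: Lower Density} is a \emph{lower} density bound at interior points; it says nothing about an \emph{upper} bound on $\Theta^*_n(\partial^* E,p)$ at the boundary point $p$. Without that upper bound, the perimeters of $E_{r_k}$ need not be locally bounded, so the limit need not be a set of locally finite perimeter and the first-variation identity need not pass. In the anisotropic setting there is no monotonicity formula to supply this, and the paper devotes an entire section (Theorem~\ref{T: finiteness density}) to proving $\Theta^*_n(V,p)<\infty$ at touching points, via a Solomon--White type argument with a carefully designed vector field. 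Moreover, as the paper explains in Section~\ref{ssec: prelim first variation}, the stationarity condition \eqref{eqn: aniso isotropic stationary} does \emph{not} pass to $L^1_{\mathrm{loc}}$ blow-ups of $E$ as a set of finite perimeter, because interior boundary can collapse onto $\partial H$; one must instead blow up the capillary varifold $V$ of \eqref{eqn: weighted varifold}. Your homogeneous identity for $E_\infty$ is thus not justified.

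\textbf{Second gap: the transfer step.} Even granting a good blow-up, your Step~3 is the heart of the matter and is not carried out. The claim that convexity of $\Phi$ yields a pointwise domination of $B_\Phi(\nu):DY$ for arbitrary $\nu$ by its value at $\nu_{G_\infty}$ does not follow from any standard calibration inequality; calibrations compare energies, not first variations, and $B_\Phi(\nu):DY$ has no sign for generic $\nu$ once $DY$ is nonconstant. The paper avoids this entirely by first proving (Proposition~\ref{P: flat BU under fin dens}) that some blow-up $V_0$ restricted to $H$ is a finite union $\sum\theta_i\var(\Pi_{\eta_i})$ of half-hyperplanes with integer multiplicities. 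With that explicit structure in hand, the paper then tests with the concrete field $X_0(x)=-\varphi(|x|)e_n$, reduces each half-plane contribution via Lemma~\ref{L:half planes are stationary} to a multiple of $\omega_i=-\nu^\Phi_i\cdot\nu_H$, and obtains the contradiction from $\sum\theta_i\omega_i\geq\omega$ together with $\omega_i<\omega\leq 0$. That algebraic inequality (each $\theta_i\in\mathbb{N}$, each $\omega_i<\omega\leq 0$) is exactly where the hydrophobic restriction $\omega\leq 0$ is used, and it replaces your undetermined ``pointwise dominator'' by an explicit computation.
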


 As discussed above, in the absence of enough regularity to derive Young's law in a classical sense for critical points of $\PEn$ (or, more generally, of $\bPhi_{\omega}$), we instead argue locally through blowup analysis.
 A key point is that we need only to analyze blowups at points at which $E$ is touched from the interior by a $C^1$ set $G$, and the very presence of $G$ forces more regularity.

 In the simpler case of the area functional, the basic idea of the proof of Theorem~\ref{thm: stationary implies viscosity} is the following contradiction argument. 
 If there is a $C^1$ set $G$ that touches $E$ from the interior at $p \in\Gamma$ with ``too big'' of a contact angle, i.e. $\nu_G(p) \cdot \nu_H > - \omega$, then any blowup of $\partial^*E$ at $p$, when intersected with $H$, is a cone contained in the wedge $H\setminus T_pG.$ A maximum principle argument by iteratively tilting the half-hyperplane  bounding the wedge until it meets this blowup shows that the blowup of $\partial^*E$ in $H$ is a finite union of half-hyperplanes. As the blow-up is also stationary, integrating by parts in \eqref{eqn: isotropic stationary} for the blowup yields a contradiction.  This is carried out in Section~\ref{sec: stat to visc}. This is the part of the argument that fundamentally relies on being in the hydrophobic regime $\omega \in (-1,0]$, as Young's law for a finite union of half-hyperplanes meeting along a common $(n-1)$-plane only has a unique solution in the hydrophobic case; see Remark~\ref{R:small-contact-angle-regime} for further discussion of this point. This observation shows that proving Theorem~\ref{thm: stationary implies viscosity} and more generally Theorems \ref{T:Alexandrov:Isotropic}-\ref{T:new theorem} in the hydrophilic regime cannot solely rely on the local analysis of the blow-ups at the capillary boundary, requiring instead a global approach.

The monotonicity formula plays a central role in the strategy described above. Consequently, several steps break down in the anisotropic setting.
Indeed, blow-up analysis for anisotropic minimal surfaces is well-known to be challenging due to the absence of a monotonicity formula.
First, the existence of the blow-ups themselves is not guaranteed, due to the possible existence of infinite density points. 
Next, when blow-ups exist, they are not necessarily cones. Moreover, the lack of upper semi-continuity of the density leads to compactness issues for integral varifolds, hence blowups are not necessarily integral.

Nevertheless, in Section~\ref{S: rule out inf dens} we prove that the density is actually finite at any point in $\Gamma$ where $E$ is touched from the interior by a $C^1$ set $G$. The main idea is that, if such a point has infinite density, then a suitable mass-rescaling of $E$ would provide a varifold supported in a wedge, which is stationary with respect to vector fields in $\TVF$. This comes from the fact that the touching set $G$ imposes a multiplicity bound on the wetting region $\partial E \cap \partial H$ of $E$. Since the varifold touches the spine of the wedge, we construct an appropriate vector field in $\TVF$ that violates \eqref{eqn: aniso isotropic stationary}. This guarantees the existence of blow-ups, though not necessarily with a conical structure. In Section~\ref{S: BU union of planes}, we prove the existence of a blow-up that is finite union of half-hyperplanes by refining a non-linear maximum principle argument.

\noindent \textbf{Acknowledgments.} Antonio De Rosa was funded by the European Union: the European Research Council (ERC), through StG ``ANGEVA'', project number: 101076411. Views and opinions expressed are however those of the authors only and do not necessarily reflect those of the European Union or the European Research Council. Neither the European Union nor the granting authority can be held responsible for them.
Robin Neumayer is supported by NSF grants DMS-2340195, DMS-2155054, and DMS-2342349.

\section{Preliminaries}\label{sec: prelimininaries}

In this section, we fix notation, definitions, and background that will be used throughout this article. 
The open ball of radius $r>0$ and center $x$ in $\R^{n+1}$ is denoted by $B_r(x)$, and, if $x=0$, we simply write $B_r$. The sphere of radius $1$ is then $\bbS^n:= \partial B_1$.  For a vector $\nu\in\bbS^n$, we denote by $\nu^\perp$ its orthogonal space.

\subsection{Rectifiable sets and sets of finite perimeter.}
For any $k\in \mathbb{N}$, the Hausdorff measure of dimension $k$ in $\R^{n+1}$ is denoted by $\cH^k$ and the $\cH^k$-measure of the $k$-dimensional unit ball is denoted by $\omega_k$. 
For any vector-valued Radon measure $\mu$ on $\R^{n+1}$, we denote with $|\mu |$ the total variation of $\mu$.
A Borel set $M$ is $n$-rectifiable if  $M$ can be covered, up to a $\cH^n$-null set, by countably many images of Lipschitz images of $\R^n$. 
 A Lebesgue measurable set $E$ is a set of locally finite perimeter if for every compact set $K \subset {\R^{n+1}}$ we have
\[
\sup \left\{\int_E \div X(x) \rmd x\ : \ X \in C_{\mathrm{c}}^1({K}, \R^{n+1}), \sup_{{K}}|X| \leq 1\right\}<\infty .
\]
If this quantity is bounded independently of $K$, we say that $E$ is a set of finite perimeter. By the Riesz representation theorem, for any set of locally finite perimeter $E$, there is a vector-valued Radon measure $\mu_E$, called the Gauss-Green measure of $E$, such that $\int_E \text{div}T \, dx = \int T\cdot d \,\mu_E$ for any $T \in C^1_c({\R^{n+1}}, \R^{n+1}).$  The {(relative)} perimeter of $E$ in $A\subset {\R^{n+1}}$ is $P(E;A) := |\mu_E|(A)$.

The {reduced boundary $\partial^* E$ of $E$} and the {outer unit normal $\nu_E(x)$ to $E$ at $x$} are then defined as follows:
\[ 
    \partial^* E := \left \{x\in\spt(\mu_E): \nu_E(x) := \lim_{r\to 0}\frac{\mu_E(B_r(x))}{|\mu_E|(B_r(x))}\mbox{ exists and is a unitary vector} \right\}.
\]
For a set $E$ of locally finite perimeter,
$\partial^* E$ is $n$-rectifiable and $\mu_E = \nu_E\cH^n\res\partial^*E$; see \cite[Remark 15.3 and Corollary 16.1]{maggi2012sets}.

\begin{remark}\label{r:openrep}
    As stated in the introduction, we tacitly choose a representative for any set of finite perimeter $E$ in $\R^{n+1}$ with $\partial E = \overline{\partial^*E}$; see \cite[Prop. 12.19]{maggi2012sets}.
If $|\overline{\partial^*E}| =0$, we may, and always do, choose this representative to be an open set. Indeed, by \cite[Prop. 12.19]{maggi2012sets} and its proof,  any set of finite perimeter $E$ in $\R^{n+1}$ has
 $\overline{\partial^*E}  = \{x \in \R^{n+1} : 0< |E\cap B_r(x) |< \omega_n r^n,\  \forall r>0\}$, and the open set $A_1 = \{x \in \R^{n+1} : |B_r(x)\cap E| = \omega_n r^n \text{ for some } r>0\} $ has $\partial A_1 \subset \overline{\partial^*E}$ and $|E\Delta A_1| \leq | \overline{\partial^*E}|$. Thus, if $ | \overline{\partial^*E}|=0$, then $A_1$ is the desired representative of $E$.
Observe that if $\mathcal{H}^n({\partial E} \setminus\partial^*E)=0$, i.e. if  \eqref{A:RegAnis:Intro} holds, then in particular $ | \overline{\partial^*E}|=0$ and we have such a representative.
\end{remark}

A sequence of sets of locally finite perimeter $\{E_k\}$ converges in $L^1_{loc}$ to a set of locally finite perimeter $E$ if $|(E_k\Delta E)\cap K|\to0 $ for every compact set $K$.

\subsection{Elliptic integrands}\label{ssec: prelimm aniso}
We fix an elliptic integrand $\Phi :\R^{n+1} \to \R_+$, that is, an even, positively one-homogeneous function  $\Phi \in C^{2,\alpha}(\R^{n+1}\setminus \{0\})$ satisfying the uniform ellipticity condition
\begin{equation}
    \label{eqn: ellipticity}
    D^2\Phi_\nu[\tau, \tau] \geq \gamma \frac{ |\tau|^2 - \big( \tau\cdot \frac{\nu}{|\nu|}\big)^2}{ |\nu|}\mbox{ for any }\nu,\tau\in \R^{n+1}, \nu\neq 0,  \mbox{ and some }\gamma > 0.
\end{equation}
We let $m_\Phi := \inf\{\Phi(\nu):\nu\in\bbS^n\}$ and $M_\Phi := \sup\{\Phi(\nu):\nu\in\bbS^n\}$. 
Let $\Phi^*$ be the dual norm to $\Phi$, i.e. 
\begin{equation}\label{E:def anis dist and dual norm}
	\Phi^\ast(y) = \sup\{ y\cdot x : x\in \R^{n+1}\mbox{ and }\Phi(x) \leq 1\}\,.
\end{equation}
By duality, the Fenchel inequality $x\cdot y \leq \Phi^\ast(x)\Phi(y)$ holds for any $x,y\in \R^{n+1}$. 
By one-homogeneity, $\Phi(z) = z\cdot\nabla\Phi(z)$ and $\nabla \Phi$ and $\nabla\Phi^\ast$ are zero-homogeneous.
The {Wulff shape of radius $r>0$ centered at $y$} is 
\begin{equation*}
    \cW_r(y) := \left\{ x\in\R^{n+1}: \Phi^\ast(y-x) < r\right\}.
\end{equation*}
For a set of finite perimeter $E$, we let $\nu_E^\Phi(x) = \nabla \Phi \circ \nu_E(x)$ be the Cahn-Hoffman normal.
Observe that 
\begin{equation}
    \label{eqn: CH map} 
    \na \Phi(\nu_\cW (x)) = x  \text{ for each }x \in \partial \cW \qquad \text{ and }\na \Phi^*(x) = \nu_\cW(\tfrac{x}{\Phi^*(x)}).
\end{equation}
In particular,  $\Phi^\ast\circ\nabla\Phi \equiv 1$ and $\nabla\Phi\circ \nabla\Phi^\ast \equiv \mathrm{id}$ for $x \in \partial \mathcal{W}_1$.

Fix $e \in \bbS^n \cap \partial H$.  For $\nu_1, \nu_2 \in \bbS^n$ in the half $2$-plane $\textup{span}\{e_{n+1}, e_+\}= \{a e_{n+1} + b e : a \in\R, b>0 \}$, we have
    \begin{equation}\label{I:contrapositive ordering Cahn-Hoffman}
    \nu_1 \cdot \nu_H > \nu_2 \cdot \nu_H \quad \iff \quad \nabla \Phi (\nu_1) \cdot \nu_H >\nabla \Phi(\nu_2) \cdot \nu_H.
    \end{equation}
This is shown in  \cite[Proposition 3.1]{jia2023alexandrov}. 

As a consequence of uniform ellipticity and one-homogeneity, there is a constant $c>0$ such that 
    \begin{equation}\label{L: Unif Elliptic}
        \Phi(x)\Phi(y) - \left(\nabla \Phi(x)\cdot y\right)\left(\nabla \Phi(y)\cdot x\right) \geq c |x-y|^2, \mbox{ for any }x,y.
    \end{equation}

If $\Omega$ is an open set with $C^2$ boundary, then the (scalar) anisotropic mean curvature $H_\Omega^\Phi (x)$ is given by 
\[
H_\Omega^\Phi  (x) = \text{div}^{\partial \Omega} ( \nu_\Omega^\Phi(x) ) =  \text{div}^{\partial \Omega} (\nabla \Phi(\nu_\Omega(x)), \mbox{ for }x \in \partial \Omega.
\]
The eigenvalues of $D\nabla \Phi(\nu_\Omega(x))$ are the anisotropic principle curvatures, denoted by $\kappa_i^\Phi(x)$; clearly $H_\Omega^\Phi  (x) = \sum\kappa_i^\Phi(x)$. Another equivalent point-wise expression for $H_\Omega^\Phi  $ is given by 
\begin{equation}\label{E:MC 0-level set}
H_\Omega^\Phi  (x)=   \tfrac{1}{\Phi(\nu_\Omega(x))}   B_\Phi(\nu_\Omega(x)):D \nu_\Omega^\Phi(x), \mbox{ for }x \in \partial \Omega.
\end{equation}
Here $A:B := \mathrm{tr}(A^tB)$ denotes the Hilbert–Schmidt inner product, $B_\Phi(\nu) := \Phi(\nu) \text{Id}  -\nu \otimes \nabla\Phi(\nu)$ for $\nu \in \bbS^n$ is as defined above, and $D\nu_\Omega^\Phi(x)$ denotes the derivative in $\R^{n+1}$ of the function $\nu_\Omega^\Phi(x)$ extended constantly along normal rays to be defined in a tubular neighborhood of $\partial \Omega$.
Indeed, to see \eqref{E:MC 0-level set}, fix $x \in \partial \Omega$ and an orthonormal basis $\{\tau_1 ,\dots, \tau_n\}$ for $T_x\partial \Omega$ that diagonalizes the second fundamental form $A_{\Omega}(x)$ of $\partial \Omega$ at $x,$ i.e. $A_\Omega(x) = \sum_{j=1}^n\kappa_j \tau_j \otimes \tau_j$, and let $\nu = \nu_\Omega(x)$ so that $\{\tau_1,\dots, \tau_n, \nu\}$ is an orthonormal basis for $\R^{n+1}$.
Since $D^2\Phi_\nu = \partial_{ij}\Phi (\nu) \, \tau_i \otimes \tau_j$ by homogeneity, in this basis we have $D\nu_\Omega^\Phi(x)  = \sum_{i,j=1}^n \kappa_j\partial_{ij}\Phi(\nu) \tau_i \otimes \tau_j.$   
So, we have $H_\Omega^\Phi  (x)  = \text{trace}(D\nu_\Omega^\Phi(x)) = \sum_{i=1}^n \kappa_i\partial_{ii} \Phi(\nu)$, while, since $D\nu_\Omega^\Phi(x)$ is purely tangential, i.e. $\nu_\Omega(x) \in \ker D\nu_\Omega^\Phi(x)$, then $B_\Phi(\nu_\Omega(x)) : D\nu_\Omega^\Phi(x) $ is simply equal to $ \Phi(\nu_\Omega(x)) \text{Id} :{D\nu_\Omega^\Phi(x)} =\sum_{i=1}^n \kappa_i\partial_{ii} \Phi(\nu).$ This shows \eqref{E:MC 0-level set}.

\subsection{Varifolds} \label{subsec: prelim varifolds}
An $n$-varifold on $\R^{n+1}$ is a positive Radon measure $V$ on $\R^{n+1} \times \bbS^n$ such that $V(A \times S) = V(A\times (-S))$ for any $A\subset \R^{n+1}$ and $S\subset \bbS^n$. The weight $\|V\|$ of $V$ is the Radon measure defined by
\begin{equation*} 
    \|V\|(A) := V(A\times \bbS^n), \quad \forall A\subset \R^{n+1}.
\end{equation*}
{An $n$-varifold $V$ is {$n$-rectifiable} if there exist an $n$-rectifiable set $M$ and a Borel function $\theta:\R^{n+1}\to \R_+$ such that $V = \theta\cH^n\res (M\cap U) \otimes (\frac12\delta_{\nu_M(x)} + \frac12\delta_{-\nu_M(x)})$ where $\delta_a$ denotes the Dirac delta concentrated at $a$.
To any 
 $n$-rectifiable set $M$ in $\R^{n+1}$, we can associate the $n$-rectifiable varifold $\var(M) : = \cH^n\res M \otimes (\frac12\delta_{\nu_M(x)} + \frac12\delta_{-\nu_M(x)})$.}

For a diffeomorphism $\psi \in C^1_c(\R^{n+1},\R^{n+1})$ and an $n$-varifold $V$, the {push-forward $\psi^\# V$ of $V$  with respect to $\psi$} is the $n$-varifold defined by 

\begin{equation}\label{E:pushforward-varifold}
    \int_{\R^{n+1}\times \bbS^n}f(x,\nu)d(\psi^\#V)(x,\nu) = \int_{{\R^{n+1}\times \bbS^n}}f\Big(\psi(x),\tfrac{D\psi_x(\nu)}{|D\psi_x(\nu)|}\Big)J\psi(x,\nu^\perp) \, dV(x,\nu),
\end{equation}
for all $f\in C^0_c(U \times \bbS^n)$. Here $D\psi_x$ is the differential of $\psi$ and $J\psi(x,\nu^\perp)$ is the Jacobian determinant of the differential $D\psi_x$ restricted to $\nu^\perp$, i.e., 
\[
J\psi(x,\nu^\perp)
:=\sqrt{\det\Big(\big(D\psi_x\big|_{\nu^\perp}\big)^* D\psi_x\big|_{\nu^\perp}\Big)}.
\]
The upper and lower $n$-dimensional density of a Radon measure $\mu$ on $\R^{n+1}$ at $x$ are,  respectively,
\begin{equation*} 
    \udens{n}{\mu}{x} := \limsup_{r\to 0^+} \frac{\mu(B_r(x))}{\omega_n r^n}\qquad \mbox{ and }\qquad \ldens{n}{\mu}{x} := \liminf_{r\to 0^+} \frac{\mu(B_r(x))}{\omega_n r^n}.
\end{equation*}
If $\udens{n}{\mu}{x} = \ldens{n}{\mu}{x},$ we call this number the {density of $\mu$ at $x$} and denote it by $\dens{n}{\mu}{x}$. { For an $n$-varifold, we let $\udens{n}{V}{x}:=\udens{n}{\|V\|}{x}$  and for an $n$-rectifiable set $M$, we let $\udens{n}{M}{x}:=\udens{n}{\cH^n\res M}{x}$ (and likewise for the lower density and the density).  }

Given $x\in\spt(\|V\|)$ and $r>0$, let
\begin{equation}
    \label{eqn: blowup seq}
    V_{x, r}:= \left(\iota_{x, r}\right)^{\#} V,
\end{equation}
and let $\cB_x(V)$ denote the set of blow-ups of $V$ at $x$, i.e. the set of all varifolds $V'$ arising as subsequential limits of $V_{x, r}$ as $r \to 0$. 
Here convergence is in the sense of varifolds, i.e. as Radon measures.
In general, $\cB_x(V)$ may be empty. However, if $\Theta^*(V,x)<+\infty$, then any blowup sequence is pre-compact and thus $\cB_x(V)$ is nonempty in this case.
\noindent 

\subsection{First variation and capillary varifolds}\label{ssec: prelim first variation} 
The anisotropic surface energy naturally extends to $n$-varifolds by letting $\bPhi(V) = \int_{\R^{n+1} \times \bbS^n} \Phi(\nu) \, \rmd V(x,\nu)$. 
The first variation of an $n$-varifold $V$ with respect to the energy $\bPhi$ is the order one distribuition defined by $\Fv V(X):=\frac{d}{dt}|_{t=0} \bPhi( (\text{Id} +tX)^\# V) $ for a vector field $X \in C^1_c(\R^{n+1}).$ A classical computation {(cf. \cite[Lemma A.2]{de2018rectifiability})} shows that 
\begin{equation}\label{E:first-variation-Ani0}
\begin{aligned}
	\Fv V(X) &= \int B_\Phi(\nu): DX(x) \rmd V(x,\nu) .
\end{aligned}
\end{equation}
{Recall that $B_\Phi(\nu) := \Phi(\nu) \text{Id}  -\nu \otimes \nabla\Phi(\nu)$.}

For a set $E\subset H$ that satisfies \eqref{eqn: aniso isotropic stationary}, it is not useful to consider blow-ups of $E$ as a set of finite perimeter or blow-ups  of the varifold $\var(\partial^*E)$, because the stationarity condition \eqref{eqn: aniso isotropic stationary} is not preserved in either blow-up limit, as an interior component could collapse on $\partial H$ in the limit. This issue has already been recognized, for instance, in \cite{nick2024regularity,wang2024allard}. Instead, following \cite{nick2024regularity}, we associate to $E$ the  \textit{capillary varifold} $V$ on $\R^{n+1}$ with $\spt\| V\| \subset \overline{H}$ defined by 
\begin{equation}\label{eqn: weighted varifold}
V = \var(\partial^*E \cap H)  - \frac{\omega}{\Phi(\nu_H)} \var(\partial^*E \cap \partial H).
\end{equation}
$V$ is indeed a well-defined varifold in the case $\omega \leq 0$. A direct computation shows that $E$ satisfies  \eqref{eqn: aniso isotropic stationary} if and only if $V$ satisfies
\begin{equation}\label{eqn:fv}
\Fv V(X)  - \lambda \int X(x) \cdot \nu \, \rmd  V(x,\nu) = 0 \qquad \mbox{ for any } X \in \TVF.
\end{equation}
It is this stationarity condition that passes to the blow-up limit. To be more precise, for a point $p \in \Gamma$, any blow-up $V_0 \in \cB_p(V)$ -- should it exist -- satisfies $
\Fv V_0(X)=0$  for any $X \in \TVF$.

\section{Finiteness of the upper density} 
\label{S: rule out inf dens} Fix $\omega \in (-\Phi(\nu_H) , 0]$ and 
let $E$ be  a critical point of $\bPhi_\omega$ among volume preserving variations, so that $E$ satisfies \eqref{eqn: aniso isotropic stationary} for some $\lambda >0.$
The proof of Theorem~\ref{thm: stationary implies viscosity} in Section~\ref{sec: stat to visc} is based on a blowup argument at points on the capillary boundary $\Gamma$.

In general, however, no such blow-up may exist. For the area functional and {the capillary varifold} $V$ defined as in \eqref{eqn: weighted varifold}, a blow-up $V_0$ exists and its support is a cone, see Lemma~\ref{lemma: cone} below, but even in this case, no further (useful) structural information can be deduced about $V_0$.
The key idea is that, in order to prove Theorem~\ref{thm: stationary implies viscosity}, we only need to analyze blow-ups of the varifold $V$ at those points $p \in \Gamma$ where $E$ is touched from the interior by a $C^1$ set $G$ with $\nu_G^\Phi(p) \cdot \nu_H >-\omega$.

This touching set imposes additional regularity of $\partial E$ at $p$ and in particular yields a blow-up of the {capillary} varifold $V$ with substantially more structure than merely being a cone (even for the anisotropic case). More precisely, at such a point $p$, we prove in Theorem~\ref{T: finiteness density} below that $\Theta^*_n(V,p) <+\infty$, which in particular guarantees that the blowup set $\cB_p(V)$ is nonempty.  Later, in Proposition~\ref{P: flat BU under fin dens}, we show that there is a blowup $V_0 \in \cB_p(V)$ whose support {in $H$} is the union of finitely many half-planes.

\begin{theorem}
\label{T: finiteness density}
Let $\omega \in (-\Phi(\nu_H), 0]$ and let {$E\subset H$ be a set of finite perimeter satisfying \eqref{eqn: aniso isotropic stationary}}. For $p \in \Gamma$, suppose there is an open $C^1$ set $G\subset \R^{n+1}$ touching $E$ from the interior at $p$ with $\nu_{G}^\Phi(p) \cdot  \nu_H > 0$. Then
\begin{equation*}
	\Theta^*_n(\partial^* E, p) < +\infty.
\end{equation*}   
Equivalently, $\Theta^*_n(V,p ) < +\infty$  for the {capillary} varifold $V$ associated to $E$  in  \eqref{eqn: weighted varifold}.
\end{theorem}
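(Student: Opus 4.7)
The plan is to argue by contradiction via a blow-up of the capillary varifold $V$ at $p$. Suppose $\Theta^*_n(V,p) = +\infty$, and pick $r_k \downarrow 0$ with $m_k := r_k^{-n}\|V\|(B_{r_k}(p)) \to \infty$. By choosing $r_k$ along a subsequence where $r \mapsto r^{-n}\|V\|(B_r(p))$ is locally close to its supremum on a moderately shrinking interval, one obtains uniform mass bounds $\|\tilde V_k\|(B_R) \le C(R)$ on the mass-normalized push-forwards
\[
\tilde V_k := m_k^{-1}(\iota_{p,r_k})^{\#}V, \qquad \|\tilde V_k\|(B_1) = 1.
\]
Testing \eqref{eqn: aniso isotropic stationary} against vector fields $X(x) = r_k Y((x-p)/r_k)$ for $Y \in \TVF$ and rescaling yields the approximate stationarity
\[
\Fv \tilde V_k(Y) = \lambda r_k \int Y \cdot \nu \, d\tilde V_k,
\]
whose right-hand side vanishes as $r_k \to 0$. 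Up to a subsequence, $\tilde V_k \to V_0$, a nontrivial varifold with $\Fv V_0(Y) = 0$ for every $Y \in \TVF$.

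\textbf{Structure of $V_0$.} The containment $G \cap H \subset E$ and the $C^1$ regularity of $\partial G$ at $p$ force $(G-p)/r_k$ to converge to the open half-space $G_0 = \{y : y \cdot \nu_G(p) < 0\}$. Since $\partial^*E \cap H \cap G = \emptyset$, we conclude $\spt\|V_0\| \cap H \subset \overline{W \cap H}$ where $W := \{y : y \cdot \nu_G(p) \ge 0\}$. The hypothesis $\nu_G^\Phi(p) \cdot \nu_H > 0$ ensures that this wedge is strictly opening about the $(n-1)$-dimensional spine $S := T_p\partial G \cap \partial H$ in the anisotropic sense relevant to $\bPhi$. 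Moreover, the $\partial H$-part of $V$ has bounded multiplicity $-\omega/\Phi(\nu_H) < 1$, so its direct contribution to $\|\tilde V_k\|(B_R)$ is $O(m_k^{-1}) \to 0$; any mass of $V_0$ on $\partial H$ must therefore arise from bulk mass of $\partial^*E \cap H$ accumulating tangentially.

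\textbf{Contradiction and main obstacle.} To close the argument, I would plug into $\Fv V_0(X) = 0$ a test field $X \in \TVF$ tailored to the wedge geometry. A natural first candidate is the position field $X(y) = \phi(|y|)\,y$, which belongs to $\TVF$ (since $y \cdot \nu_H = -y_{n+1} = 0$ on $\partial H$) and for which $B_\Phi(\nu):I = n\Phi(\nu) \ge n m_\Phi > 0$ yields a positive interior contribution bounded below by $n m_\Phi \|V_0\|(B_\rho)$, against an annular error controlled by $C \|V_0\|(B_{2\rho}\setminus B_\rho)$. Combining this with a horizontal-translation field $X(y) = \phi(|y|)\,\vec w$ with $\vec w \in \partial H$ pointing out of the wedge (automatically in $\TVF$), and invoking the anisotropic ellipticity \eqref{L: Unif Elliptic}, should produce a strictly signed first variation, contradicting stationarity. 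The main obstacle is precisely this sign extraction: without a monotonicity formula for anisotropic varifolds, $V_0$ is not known to be a cone or even rectifiable, so the strict opening of the wedge (guaranteed by $\nu_G^\Phi(p) \cdot \nu_H > 0$) together with the ellipticity of $\Phi$ must carry the full weight of producing the signed outcome.
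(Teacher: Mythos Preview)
Your overall contradiction strategy matches the paper's, but there is a genuine gap at exactly the point you flag as the ``main obstacle,'' and your candidate test fields do not close it.

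The position field $X(y)=\phi(|y|)y$ only yields an inequality of the form $n\,m_\Phi\,\|V_0\|(B_\rho)\le C\,\|V_0\|(B_{2\rho}\setminus B_\rho)$, i.e.\ a doubling-type estimate, not a contradiction; a nontrivial stationary varifold supported in a wedge can satisfy this without issue. The horizontal translation $X(y)=\phi(|y|)\vec w$ has $B_\Phi(\nu):DX = \phi'(|y|)\,\hat y\cdot B_\Phi(\nu)\vec w$, which changes sign with $\nu$ and does not produce a strictly signed first variation. Neither field exploits the crucial hypothesis $\nu_G^\Phi(p)\cdot\nu_H>0$ in any quantitative way.

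What is actually needed (and what the paper builds) is a vector field $Y\in\TVF$ supported in a small ball, with $|Y|\ge\bar c>0$ on $B_{\bar\rho}\cap W$, such that $B_\Phi(\nu):DY(x)\le -2\bar c\,|Y(x)|$ for \emph{every} $(x,\nu)\in(\spt Y\cap W)\times\bbS^n$. This is a Solomon--White-type construction: one sets $Y=\varphi\,\eta(f)\,\nabla\Phi(\nabla f)$ for a carefully chosen scalar function $f$ whose sublevel sets have strictly negative anisotropic mean curvature and which additionally satisfies $\nabla\Phi(\nabla f)\in\partial H$ on $\partial H$ (this is what forces $Y$ to be tangential). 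The paper takes $f(x)=\Phi^*(x+x_e)-1-\Lambda x_{n+1}^2$, where $x_e\in\partial\cW\cap\partial H$ is determined by the direction $e$ of the wedge spine; the $\Phi^*$-term gives the tangentiality via \eqref{eqn: CH map}, and the $-\Lambda x_{n+1}^2$ correction makes the anisotropic mean curvature negative. The hypothesis $\nu_G^\Phi(p)\cdot\nu_H>0$ enters precisely to guarantee that $\overline H\setminus T_pG$ is a \emph{strict} subwedge of the ``neutral'' wedge $W(\Phi,e)$, so that $\{f\ge0\}$ locally contains $W$ and the construction goes through.

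A secondary point: the paper does not mass-normalize and pass to a limit $V_0$. It tests the unnormalized rescalings $V_k$ directly with the field $Y$ and obtains $\delta_\Phi V_k(Y)-\lambda r_k\int Y\cdot\nu\,dV_k\to-\infty$, contradicting stationarity. This sidesteps the issue you would face of ensuring that $V_0$ has mass near the origin (rather than near $\partial B_1$); the paper handles the localization via the area-blowup set of \cite{philippis2019area}. Your normalized approach could be made to work once you have the correct test field, but as written the argument is incomplete at the decisive step.
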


\begin{remark}
    In the case of the area functional, the conclusion of Theorem~\ref{T: finiteness density} is an immediate consequence of the classical monotonicity formula, which holds for any $p \in \partial H$; see the proof of Lemma~\ref{lemma: cone}.
\end{remark}
Before starting the proof of Theorem~\ref{T: finiteness density}, we need some notation. Let $\cW := \cW_1$ and let $K := \cW\cap \partial H$ be the intersection of the Wulff shape with $\partial H$. Since $\cW$ is open, uniformly convex, and contains the origin, $K$ is relatively open in $\partial H$ and uniformly convex. 
Fix a direction $e \in \bbS^{n} \cap \partial H$ and let $x_e$ be the unique point in $\partial K$ such that $\nu_K(x_e) = e$. Here, $\partial K$ denotes the boundary relative to $\partial H$.
 Thus,  we have
 \begin{equation}\label{E: normal to W at x_e}
     \nu_{\cW}(x_e)\in\mathrm{span}\{e_+, e_{n+1}\} := \{ x\in\R^{n+1}: x = ae + be_{n+1}\mbox{ for }a>0, b\in \R\}\,.
 \end{equation}
Correspondingly, define the closed wedge 
\[
W(\Phi, e) := \overline{H} \cap \{ x \in \R^{n+1} : x\cdot \nu_{\cW}(x_e)  \geq 0\}. 
\]
This wedge is invariant along $\text{span}\{ e, e_{n+1}\}^{\perp}$, and $W(\Phi, e)+  x_{e}$ is contained in the complement of $\cW.$

A set $W \subset \R^{n+1}$ is called {\it strict subwedge} of $W(\Phi, e)$ if it takes the form 
\begin{equation}\label{eqn: strict subwedge}
    W= \overline{H} \cap \{ x \in \R^{n+1} : x \cdot \nu \geq 0 \}\qquad \mbox{ for }\nu \in \text{span}\{e_+,e_{n+1}\}\mbox{ with }\nu \cdot \nu_H > \nu_{\cW}(x_e) \cdot \nu_H.
\end{equation}
Note that $W$ is a strict subset of $W(\Phi, e)$ and is a closed wedge that is invariant along $\text{span}\{e, e_{n+1}\}^{\perp}$.
The most relevant strict subwedges arise as follows.

\begin{lemma}
    \label{lem: wedge containment} 
Let $G\subset\R^{n+1}$ be an open set with $C^1$ boundary. Suppose $p \in \partial G\cap \partial H$ is a point  such that
 $\nu_G^\Phi(p)\cdot \nu_H >0$. Then either $\nu_G(p) =\nu_H$ or  $\overline{H}  \setminus T_p G$ is a strict subwedge of $W(\Phi,e)$, where $e$ is the unique vector in $\bbS^n\cap \partial H$ with  $
\nu_G(p) \in \text{span}\{e_+,e_{n+1}\}$.
\end{lemma}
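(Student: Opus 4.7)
The plan is to derive the lemma essentially as a short consequence of the ordering equivalence \eqref{I:contrapositive ordering Cahn-Hoffman} together with the identity $\nabla\Phi(\nu_\cW(x_e)) = x_e$ from \eqref{eqn: CH map}. First I would isolate the nondegenerate case: by one-homogeneity of $\Phi$, we have $\nabla\Phi(-\nu_H)\cdot\nu_H = -\Phi(-\nu_H) < 0$, so the hypothesis $\nu_G^\Phi(p)\cdot\nu_H>0$ immediately rules out $\nu_G(p)=-\nu_H$. Combined with the alternative $\nu_G(p)\neq\nu_H$ singled out in the statement, $\nu_G(p)$ must have a strictly positive tangential component, uniquely selecting $e\in\bbS^n\cap\partial H$ such that $\nu_G(p)\in\text{span}\{e_+,e_{n+1}\}$. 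By \eqref{E: normal to W at x_e}, $\nu_\cW(x_e)$ lies in the same half 2-plane, placing both vectors in the setting required by \eqref{I:contrapositive ordering Cahn-Hoffman}.

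Next, \eqref{eqn: CH map} gives $\nabla\Phi(\nu_\cW(x_e)) = x_e$, and since $x_e\in\partial K\subset\partial H$ this yields $\nabla\Phi(\nu_\cW(x_e))\cdot\nu_H=0$. The hypothesis then reads $\nabla\Phi(\nu_G(p))\cdot\nu_H>0=\nabla\Phi(\nu_\cW(x_e))\cdot\nu_H$, and applying \eqref{I:contrapositive ordering Cahn-Hoffman} transfers this strict inequality to $\nu_G(p)\cdot\nu_H>\nu_\cW(x_e)\cdot\nu_H$. With $\nu:=\nu_G(p)$, this is exactly the strict-subwedge inequality required by \eqref{eqn: strict subwedge}, so the half-space defined by $\nu$ cuts a strict subwedge out of $\overline{H}$.

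Finally I would identify $\overline{H}\setminus T_pG$ with the wedge $\overline{H}\cap\{x:x\cdot\nu_G(p)\geq 0\}$: since $G$ has $C^1$ boundary with outer unit normal $\nu_G(p)$, the interior tangent half-space is $T_pG = \{x:(x-p)\cdot\nu_G(p)\leq 0\}$, and, upon translating the apex $p\in\partial H$ to the origin (consistent with the apex-at-origin convention of \eqref{eqn: strict subwedge}), its complement in $\overline{H}$ is precisely this wedge. I do not anticipate a substantive obstacle here: the lemma is essentially a packaging of \eqref{I:contrapositive ordering Cahn-Hoffman} with the observation that the Cahn--Hoffman normal at $x_e$ is orthogonal to $\nu_H$. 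The only mild subtlety is matching the apex-at-origin normalization of the subwedge definition with a generic touching point $p\in\partial H$, which is handled by translation invariance of the construction along $\partial H$.
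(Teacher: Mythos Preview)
Your proposal is correct and follows essentially the same approach as the paper: both arguments use \eqref{eqn: CH map} to observe that $\nabla\Phi(\nu_\cW(x_e))=x_e\in\partial H$ has vanishing $\nu_H$-component, compare this with the hypothesis $\nu_G^\Phi(p)\cdot\nu_H>0$, and then invoke \eqref{I:contrapositive ordering Cahn-Hoffman} to transfer the strict inequality to the Euclidean normals. Your additional remarks (ruling out $\nu_G(p)=-\nu_H$ explicitly, and addressing the translation of $p$ to the origin) are fine clarifications that the paper leaves implicit.
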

\begin{proof}
Suppose $\nu_G(p) \neq\nu_H$. Since  $\overline{H}  \setminus T_p G =\overline {H} \cap \{ x \in \R^{n+1} : x\cdot \nu_G(p) \geq 0\}$,  we need only to show  that
\begin{equation}\label{eqn: ordered inner prod 1}
    \nu_G(p) \cdot \nu_H > \nu_\cW(x_e) \cdot \nu_H.
\end{equation}
By assumption, $\nabla \Phi(\nu_G(p))\cdot \nu_H >  0$. On the other hand, $\nabla\Phi( \nu_{\cW}(x_e))\cdot \nu_H = 0$ since, by \eqref{eqn: CH map}, $\nabla\Phi( \nu_{\cW}(x_e)) = x_e \in \partial H$. So, 
\begin{equation}
    \label{eqn: CH strict}
    \nabla \Phi(\nu_G(p))\cdot \nu_H  > \nabla\Phi( \nu_{\cW}(x_e))\cdot \nu_H.
\end{equation}
Since $\nu_G(p)$ and $\nu_\cW(x_e)$ lie in $\text{span}\{e_+, e_{n+1}\}$, \eqref{eqn: ordered inner prod 1} follows from \eqref{eqn: CH strict} and \eqref{I:contrapositive ordering Cahn-Hoffman}.
\end{proof}

\begin{proof}[Proof of Theorem~\ref{T: finiteness density}]
If $\nu_G(p) = \nu_H$, then $p \in \partial^*E$. Thus $\Theta^n(\partial^*E, p)$ exists and is equal to 1; c.f. \cite[Corollary 15.8]{maggi2012sets}.  

Now  assume $\nu_G(p) \neq \nu_H.$ 
Let $e$ be the unique vector in $\bbS^n\cap \partial H$ such that $
\nu_G(p) \in \text{span}\{e_+,e_{n+1}\}$.
By Lemma~\ref{lem: wedge containment}, the closed wedge $ \overline{H}\setminus T_p G$ is a strict subwedge of $ W(\Phi, e)$. Let $W$ be a strict subwedge of $W(\Phi, e)$ that contains $ \overline{H}\setminus T_p G$ as a strict subset. Let $\bar{\rho}\in (0,1)$ be a fixed constant depending only on $W$, $\Phi$, and $n$ to be specified below.

Assume by way of contradiction that $\Theta^*_n(V, p) = +\infty$ for the varifold $V$ defined in \eqref{eqn: weighted varifold}. So, we may find a sequence of scales $\{r_k\}_k$ with $r_k \to 0$ such that the varifolds $V_{k} = \iota_{p, r_k}^\# V$ satisfy $\| V_{k}\|({B_{\bar{\rho}/2}}) \to +\infty.$ 
Consider the area blowup set  $Z$ associated to the sequence $\|V_k\|$ defined by
\[
Z = \{ x\in \R^{n+1}: \limsup_k \|V_k\|(B_r(x)) = +\infty \text{ for all } r >0\}.
\]
From our choice of $W$, $\spt(\|{V}_{k}\|) \cap B_1$ is contained in $W \cup  \partial H$ for $k$ large enough. Since  $\|V_{k}\|\llcorner \partial H \leq \mathcal{H}^n \llcorner \partial H$, this means that any $x \in Z\cap \overline{B}_{\bar{\rho}/2}$ has $\limsup_k \|V_k\|(B_{r}(x) \cap W) = +\infty$ for each $r >0$.  By \cite[Lemma 4.2]{philippis2019area} and the assumption $\| V_{k}\|({B_{\bar{\rho}/2}}) \to +\infty$, the intersection $Z\cap \overline{B}_{\bar{\rho}/2}$ is nonempty. Thus by set containment, 
\begin{equation}
    \label{eqn: main consequence of area blowup}
    \limsup_k \|V_k\|(B_{\bar\rho}\cap W) = +\infty.
\end{equation}

\underline{Claim:} We claim there are constants $\bar{c}, \bar{\rho}>0$ and a vector field $Y \in \TVF$ supported in $B_1$,  depending only on $W$, $\Phi$ and $n$, such that $|Y(x)|\geq \bar{c}$ on $B_{\bar\rho}\cap W$ and, letting  $\Omega:=\spt(Y)\cap W$, 
\begin{align}\label{eqn: bound in claim}
B_\Phi(\nu) : DY(x) \leq - 2\bar{c}\, |Y(x)| \qquad \text{ for all }(x,\nu) \in \Omega \times \bbS^n\,.
\end{align}

\medskip

Assuming for now that the claim holds, let us see how it allows us to complete the proof of the theorem. On one hand, by scaling, we know that 
\begin{equation}\label{eqn: first var zero}
\alpha_{k} :=  \delta_\Phi V_k(Y) - \lambda r_k \int Y\cdot \nu \, dV_k\,=0 .
\end{equation}
On the other hand, for all $k$ large enough, $\spt(\|V_k\|)\cap\spt(Y) \subset \Omega \cup D$ where $D = \partial H \cap \{ x\cdot e<0 \}\cap B_1$. So, directly writing the first variation term and splitting the domains of integration shows 
\begin{align*}
 \alpha_k &= \int_{\Omega \times \bbS^n} \big(  B_\Phi(\nu) : DY - \lambda r_k\, Y\cdot \nu \big) \, dV_k + \int_{D \times \bbS^n}\big(  B_\Phi(\nu) : DY -  \lambda r_k \, Y\cdot \nu\big)\, dV_k= \alpha_k^\Omega +\alpha_k^D\,.
 \end{align*}
By the claim above, we have 
 \[
 \alpha_k^{\Omega} \leq \int (-2\bar{c} + \lambda r_k )|Y| \, dV_k
  \leq  -\bar{c}\int |Y| \, dV_k
 \leq -\bar{c}^2\,  \|V_k \|(B_{\bar{\rho}}\cap W) ,
 \]
 where the second inequality holds for $k$ large enough.
On the other hand, by construction, $\|V_k\|(D) \leq \frac{\omega}{\Phi(\nu_H)} \mathcal{H}^n(D) \leq \omega_n/2$ and hence $\alpha_k^D \leq C $ for a constant  $C= C(\Phi, n,\|Y\|_{C^1}).$ So, by \eqref{eqn: main consequence of area blowup}, we have 
\[
\liminf_k \alpha_k =\liminf_k ( \alpha_k^\Omega + \alpha_k^D )\leq -\bar{c}^{{2}}\,  \limsup_k\| V_k\| (B_{\bar{\rho}}\cap W) + C  = -\infty,
\]
contradicting \eqref{eqn: first var zero}.
\end{proof}

 The rest of the section is dedicated to proving the claim inside the previous proof. The construction is inspired by the proof of Solomon-White's maximum principle \cite{solomon1989strong} for varifolds that are stationary with respect to anisotropic functionals, an argument that was later adapted to the context of $(n,h)$-sets in \cite{philippis2019area}. 
The main new challenge in the present setting is to construct the vector field $Y$ to be tangential to $\partial H$.
The first step is to construct a suitable auxiliary function $f$ with $\nabla\Phi(\nabla f)$ tangential to $\partial H$.

\begin{lemma}\label{L:fnc f}
Fix $e \in \bbS^{n+1}\cap \partial H$. There exist $\Lambda>0$ and $r_1\in(0,1)$ depending on $n,\Phi,$ and $e$  such that the function $f= f_e : \R^{n+1} \to \R$ defined by 
$$f(x) := \Phi^\ast(x + x_e) - 1 - \Lambda x_{n+1}^2
$$
has the following properties:
\begin{enumerate}[\upshape (i)]
    \item $f(0)=0$ and $\nabla f(0)\in \mathrm{span}(e_+, e_{n+1})$,
    
    \item $\nabla\Phi (\nabla f(x)) = (x+x_e)/\Phi^*(x+x_e)\in \partial H$ for any $x\in \partial H$.

    \item $|\nabla f(x) | \geq \tfrac{1}{2 M_{\Phi}}>0$ for $x \in B_{r_1}$, so the level sets $\{ f=a\} \cap B_{r_1}$ are $C^{2,\alpha}$, and 
    $$H^\Phi_{\{f < a\}} (x) \leq -1$$
    for each $x \in \{f=a\} \cap B_{r_1}$.

    \item For any strict subwedge $W$ of $W(\Phi, e)$, there exists $\bar r \in (0, r_1]$ such that 
\[
W\cap B_{\bar{r}}\subset \{f \geq 0\}\cap B_{\bar{r}} \quad \text{ and } \quad \{f=0\}\cap W \cap B_{\bar{r}} = \{0\}.
\]
\end{enumerate}
\end{lemma}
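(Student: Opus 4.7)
The plan is to verify (i)--(iv) in turn by direct computation, choosing $\Lambda$ large (for (iii)) and then shrinking $r_1,\bar r$ to extend pointwise facts by continuity. Properties (i) and (ii) are immediate: since $x_e\in\partial\cW_1$ we have $\Phi^\ast(x_e)=1$, so $f(0)=0$ and $\nabla f(0)=\nabla\Phi^\ast(x_e)=\nu_{\cW}(x_e)$ by \eqref{eqn: CH map}, a unit vector in $\mathrm{span}(e_+,e_{n+1})$ by \eqref{E: normal to W at x_e}. On $\partial H$ the term $\Lambda x_{n+1}^2$ has zero gradient, so $\nabla f(x)=\nabla\Phi^\ast(x+x_e)$; composing the two identities in \eqref{eqn: CH map} then gives $\nabla\Phi(\nabla f(x))=(x+x_e)/\Phi^\ast(x+x_e)\in\partial H$, as $x,x_e\in\partial H$.

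For (iii), $|\nabla f(0)|=|\nu_{\cW}(x_e)|=1$, so the lower gradient bound and $C^{2,\alpha}$-regularity of the level sets follow by continuity on a small ball. To control the mean curvature, I would derive the level-set identity
\[
H^\Phi_{\{f<f(x)\}}(x)=\mathrm{tr}\bigl(D^2\Phi(\nabla f(x))\,D^2 f(x)\bigr)
\]
from \eqref{E:MC 0-level set} applied to the natural extension $\nu^\Phi(y):=\nabla\Phi(\nabla f(y))$ (well-defined wherever $\nabla f\neq 0$ by $0$-homogeneity of $\nabla\Phi$), using the Euler identity $\nabla f^T D^2\Phi(\nabla f)=0$ to kill the $\nu\otimes\nabla\Phi(\nu)$ contribution of $B_\Phi(\nu)$. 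Writing $\nu_0:=\nabla\Phi^\ast(x_e)$ and expanding $D^2 f(0)=D^2\Phi^\ast(x_e)-2\Lambda e_{n+1}\otimes e_{n+1}$, the first summand of the trace equals $n$ by Legendre duality: the operator $D^2\Phi(\nu_0)D^2\Phi^\ast(x_e)$ has kernel $\R x_e$ and acts as the identity on $\nu_0^\perp$, because $D^2\Phi(\nu_0)|_{x_e^\perp}$ and $D^2\Phi^\ast(x_e)|_{\nu_0^\perp}$ are inverse isomorphisms between the tangent spaces $T_{x_e}\partial\cW=\nu_0^\perp$ and $T_{\nu_0}\partial\cW^\ast=x_e^\perp$. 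The second summand is $-2\Lambda\,e_{n+1}^T D^2\Phi(\nu_0) e_{n+1}\leq -2\Lambda\gamma a^2$ by \eqref{eqn: ellipticity}, where $a:=\nu_0\cdot e>0$ is the $e$-component of $\nu_0$. Choosing $\Lambda\geq(n+2)/(2\gamma a^2)$ yields $H^\Phi(0)\leq -2$, and continuity in $x$ extends this to $H^\Phi\leq -1$ on $B_{r_1}$ for $r_1$ small.

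For (iv), I would Taylor expand
\[
f(x)=\nu_0\cdot x+\tfrac12 x^T D^2\Phi^\ast(x_e)x-\Lambda x_{n+1}^2+O(|x|^3),
\]
and decompose $x=x_\parallel+x_\perp$ with $x_\parallel\in\mathrm{span}(e,e_{n+1})$ and $x_\perp$ in the spine. The strict subwedge condition $\nu\cdot\nu_H>\nu_0\cdot\nu_H$ is equivalent in $\mathrm{span}(e,e_{n+1})$ to $\nu_0$ lying strictly inside the dual cone of $W\cap\mathrm{span}(e,e_{n+1})$, giving $\nu_0\cdot x\geq c'|x_\parallel|$ for some $c'>0$ and all $x\in W$. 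On the spine, $\ker D^2\Phi^\ast(x_e)=\R x_e$ by ellipticity and $1$-homogeneity, and $x_e\cdot e>0$ (since the origin is interior to $K$ with supporting half-space $\{y\cdot e\leq x_e\cdot e\}$ at $x_e$), so the spine avoids $\R x_e$ and $D^2\Phi^\ast(x_e)$ is positive definite there. Absorbing cross terms via Young's inequality and the $O(|x|^3)$ remainder for $\bar r$ small enough then gives $f>0$ on $(W\cap B_{\bar r})\setminus\{0\}$. The main obstacle is (iii): both the level-set curvature formula and the Legendre duality identity $\mathrm{tr}(D^2\Phi(\nu_0)D^2\Phi^\ast(x_e))=n$ require careful bookkeeping of the radial kernels of $D^2\Phi$ and $D^2\Phi^\ast$; the spinal positivity needed in (iv) is subtler than it looks and crucially rests on $x_e\cdot e>0$.
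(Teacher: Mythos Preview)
Your argument is correct and mirrors the paper's proof: direct verification of (i)--(ii), the level-set curvature formula together with Legendre duality for (iii), and a Taylor expansion controlling the $\mathrm{span}(e,e_{n+1})$ and spine components separately for (iv). One small correction: $\nabla\Phi^*(x_e)$ is only \emph{parallel} to $\nu_\cW(x_e)$, not equal to it (so $|\nabla f(0)|$ need not be $1$); the paper instead uses $\Phi(\nabla f(0))=1$ to get $|\nabla f(0)|\geq 1/M_\Phi$, and this missing $|\nabla\Phi^*(x_e)|$ factor likewise appears in the ellipticity bound for $II(0)$---but this affects only the choice of constants, not the structure of the argument.
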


\begin{proof}
The first half of (i) holds since $x_e \in \partial \cW$, and the second half follows from \eqref{E: normal to W at x_e} since $\nabla f(0) = \nabla \Phi^\ast(x_e) = |\nabla \Phi^*(x_e)| \nu_\cW(x_e)$. Similarly, (ii) follows from \eqref{eqn: CH map} since $\nabla f(x) = \nabla\Phi^\ast(x+x_e)$ for any $x\in\partial H$.

Next we show (iii). We have $|\nabla f(0)|\geq 1/M_\Phi$ since $\Phi(\nabla f(0)) = 1$ so the lower bound on $|\nabla f(x)|$ follows from continuity of $\nabla f$ by choosing $r_1$ sufficiently small, and the regularity of the level sets of $f$ then follows from the implicit function theorem and  $C^{2,\alpha}$ regularity of $\Phi^*$. The same regularity guarantees that the anisotropic mean curvature of level sets will vary continuously, so the upper bound on the anisotropic mean curvature will follow by choosing $r_1$ small provided we show $H^\Phi_{\{f<0\}}(0) \leq -2$. 

We show this by direct computation as follows, letting $\Omega := \{f<0\}$. The outer unit normal to $\Om$ at $p$ is given by $\nu_{\Omega}(p) = \nabla f(p)/|\nabla f(p)|$. Thus by homogeneity, the Cahn-Hoffman normal is $\nu_{\Omega}^\Phi(p) = \nabla\Phi(\nabla f(p))$ and, since $\nabla f(p) = \nabla \Phi^\ast(p+x_e) - 2\Lambda p_{n+1}e_{n+1}$ the anisotropic mean curvature (defined in Section~\ref{ssec: prelim first variation}) is given by
\[ H^\Phi_{\Omega} (p)  = \text{trace} (D^2 \Phi_{\nabla f(p)} \, D^2\Phi^\ast _{p+x_e} )- 2\Lambda D^2\Phi_{\nabla f(p)}[e_{n+1}, e_{n+1}]  = I(p) - II(p). \]
Tangentially differentiating the identity $\nabla\Phi\circ\nabla\Phi^\ast(y) =y$ for $y \in \partial \mathcal{W}$ yields $I(0) = n$.
For $II(0)$, again using that $\nabla f(0) = |\nabla \Phi^*(x_e)| \,\nu_{\cW}(x_e) \in \text{span}(e_+, e_{n+1})$. Writing $\nu_{\cW}(x_e) = a e + b e_{n+1}$ for $a>0$, the uniform ellipticity of $\Phi$  \eqref{eqn: ellipticity} guarantees that 
\[
 II(0) = 2\Lambda
D^2\Phi_{\nabla \Phi^\ast(x_e)}[e_{n+1},e_{n+1}] \geq 2\Lambda\,\frac{\gamma}{|\nabla \Phi^\ast(x_e)|}  a^2 >0\,.
\] 
Thus $H^\Phi_{\Omega}(0)= n-2\Lambda\,\gamma a^2/|\nabla \Phi^\ast(x_e)|$, which is at most $-2$ provided $\Lambda \geq (n-2)|\nabla \Phi^*(x_e)| /2\gamma a^2$. Hence, as observed above, item (iii) follows by continuity.

We now show (iv). Let $W$ be a strict subwedge of $W(\Phi, e)$. 
Let $\pi$ denote the projection onto $\text{span}(e,e_{n+1})$ and for $x \in W,$ write $x = \pi x + x^\perp$. Recall that, since $\Phi$ is $C^2$, $\Phi^*$ satisfies the ellipticity condition \eqref{eqn: ellipticity} for some $\gamma^*>0$. Moreover, $x \cdot \nu_\cW (x_e ) =\pi x \cdot \nu_\cW (x_e ) \geq 0$, with the inequality holding since $W\subset W(\Phi, e)$. Next, a Taylor expansion at $x=0$ shows 
\begin{align*}
    f(x)  \geq |\nabla \Phi^\ast(x_e)| \, \nu_\cW(x_e) \cdot \pi x& + \frac{1}{2} D^2 \Phi^*_{x_e}[x,x] - \Lambda x_{n+1}^2 + o(|x|^2)\\
    & \geq \frac{\gamma^*}{|x_e|} |x^\perp|^2 - \Lambda |\pi x|^2 + o(|x|^2).
\end{align*}
We consider two cases.

\textit{Case 1:} For  $x \in W \cap B_{\bar{r}}$ such that $|\pi x|^2 \leq \min\{ 1, \tfrac{\gamma^* }{2\Lambda|x_e|} \} |x^\perp|^2$, we have 
\[
f(x) \geq \frac{\gamma^*}{2|x_e|} |x^\perp|^2 + o(|x|^2) \geq \frac{\gamma^*}{4|x_e|} |x|^2 + o(|x|^2)   \geq \frac{\gamma^*}{8|x_e|} |x|^2 ,
\]
where the final inequality holds provided we choose $\bar{r}$ small enough depending only on $\Phi$ (via $\gamma^*$ and $|x_e|$). 

\textit{Case 2:} Notice first that, by definition---recall \eqref{eqn: strict subwedge}---the hyperplane 
$\nu_{\cW}(x_e)^\perp$ intersects $W\cap \text{span} (e, e_{n+1})$ only at the origin. So, by compactness, there is a positive constant $\alpha$, depending on $W$ and $\Phi$, such that $ \nu_{\cW}(x_e)\cdot \pi x \geq \alpha |\pi x|$ for each $x \in W$. 
So, the Taylor expansion above yields
\[
f(x) \geq |\nabla \Phi^\ast(x_e)| \, \nu_\cW(x_e) \cdot \pi x + o(|x|) \geq \alpha |\nabla \Phi^\ast(x_e)|\, |\pi x| +  o(|x|).
\]
So, for $x \in W \cap B_{\bar{r}}$ such that $|\pi x|^2 \geq  \min\{ 1, \tfrac{\gamma^* }{2\Lambda|x_e|} \} |x^\perp|^2$, we have $|x| \leq C|\pi x|$ for $C= 1/(1 + \min\{ 1, \tfrac{\gamma^* }{2\Lambda|x_e|} \})$. So, we may choose $\bar{r}$ small enough in terms of $\alpha$ and $\Phi^*$ so that we may absorb term $o(|x|)$ to find $f(x) \geq c|x|$ in this case.
This completes the proof. 
\end{proof}

We are now ready to prove the claim. 

\begin{proof}[Proof of Claim]
Recall that $e \in \bbS^n \cap \partial H$ is fixed and $W$ is a strict subwedge of $W(\Phi, e)$.

\noindent{\it Step 1: The vector field $Y$.} Let $f=f_e$ and $\bar{r}\in(0,1)$ be as in Lemma~\ref{L:fnc f}. 
By  Lemma~\ref{L:fnc f}(iv) and continuity of $f$, we may find $\eta_1>0$
such that 
\begin{equation}\label{eqn: f ge eta}
f(x) > \eta_1 \mbox{ for any }x\in W\cap (B_{\bar{r}} \setminus B_{\bar{r}/2}).
\end{equation} 
For a fixed number $\bar{\eta}\in (0,\eta_1)$  to be specified later in the proof, define 
\begin{equation}
    \label{eqn: def eta}
\eta(t) := \begin{cases}
    0 & \text{ for }t\geq \bar{\eta},\\
    \bar{\eta}-t  & \text{ for } t\leq \bar{\eta}.
\end{cases}
\end{equation}
Let $\varphi\in C_c^\infty(B_{\bar{r}})$ be a cutoff function that is identically equal to $1$ in ${B}_{\bar{r}/2}$, and for $x \in \R^{n+1}$, let 
$$
Y(x) := \varphi(x) \eta(f(x)) X(x)
$$ 
where $X(x) := \nabla\Phi(\nabla f(x))$. By Lemma~\ref{L:fnc f}(ii), $Y \in \TVF$. By  Lemma~\ref{L:fnc f}(iv), \eqref{eqn: f ge eta}, and the definitions of $\eta$ and $\varphi$, $\spt(Y) \subset B_1$ and $\spt (Y)\cap W$ is contained in the set $\Omega_{\bar{\eta}}:={B}_{\bar{r}/2} \cap \{0 \leq \eta \circ f \leq \bar{\eta}\} $. 
 Since $f(0)=0,$ we may choose $\bar{\rho} \in (0, \bar{r}/2)$  small enough such that (again using Lemma~\ref{L:fnc f}(iv)) $0 \leq f(x)\leq \bar{\eta}/2$ for all $x \in B_{\bar{\rho}} \cap W$.  In particular, $|Y(x)| \geq m_\Phi \bar{\eta}/2$ on $B_{\bar{\rho}}\cap W$, so the first part of the claim holds with $\bar{c} \leq m_\Phi \bar{\eta}$.
\\
 
\noindent{\it Step 2: Bounding $B_\Phi :DY$ above in $\Omega_{\bar{\eta}}\times \bbS^n$.} 
Direct computation shows that $DY =\eta\circ f\, DX - X\otimes\nabla f$
for $x \in \Omega_{\bar{\eta}}$. Thus for any $(x,\nu ) \in \Omega_{\bar{\eta}} \times \bbS^n$, 
\begin{align}\label{eqn: first var in sec 33}
B_\Phi(\nu) : DY(x)  =  \eta \circ f(x)\, \underbrace{ B_\Phi(\nu):DX(x) }_{=:I} - \underbrace{ B_\Phi(\nu): X(x)\otimes\nabla f(x)}_{=:II}.
\end{align}
We will show that up to choosing $\bar{\eta}$ sufficiently small, the right-hand side of \eqref{eqn: first var in sec 33} is bounded above by $-\bar{c}|Y(x)|$. We first bound the terms $I$ and $II$ separately.
First, recall from Lemma~\ref{L:fnc f}(iii) that each level set $\{f=a\}$ is $C^{2,\alpha}$ in $B_{\bar{r}}$. So, observing that $X(x) = \nu_{\{f<a\}}^\Phi(x)$ for $x \in B_{\bar{r}}$ and $a =f(x)$,  the pointwise identity \eqref{E:MC 0-level set} applied to $\Omega = \{f<a\}$ gives
$$
B_\Phi\left(\tfrac{\nabla f(x) }{|\nabla f(x)|}\right) : DX(x) = {\Phi\left( \tfrac{\nabla f(x) }{|\nabla f(x)|}\right)} \, H_{\{f<a\}}^\Phi(x) .$$
By Lemma~\ref{L:fnc f}(iii), the right-hand side is bounded above by $-m_\Phi$ for each $x \in B_{\bar{r}}$.
Since $\Phi$ is  $C^2$ (and thus $B_\Phi\in C^1$), we have  $|B_\Phi(\nu) - B_\Phi(\sfrac{\nabla f(x)}{|\nabla f(x)|})| \leq c_{1}'\, \rmd(\nu, x)$ for a constant $c_{1}' = c_{1}'(\Phi) >0$, where we let
\[ 
    \rmd (\nu, x) := \min\left\{ \left| \nu - \tfrac{\nabla f(x)}{|\nabla f(x)|}\right|, \left| \nu + \tfrac{\nabla f(x)}{|\nabla f(x)|}\right| \right\}\,
    \]
for $\nu\in\bbS^n$ and $x \in B_{\overline{r}}.$ 
So, letting  $c_1 = c_1(\bar{r},\Phi,f) := c_{1}'\sup\{|DX_x|: x\in B_{\bar{r}}\}$,  we have
\begin{align}\label{eqn: integrand A bound}
 I =    B_\Phi(\nu):DX(x)  & \leq - m_\Phi + c_{1}\, \rmd(\nu,x).
\end{align}

Next we bound the term $II$ from below as follows:
\begin{align*}
 II &= \Phi(\nu)\Phi(\nabla f(x)) - (\nu\cdot\nabla\Phi(\nabla f(x)))(\nabla\Phi(\nu)\cdot \nabla f(x)) \geq c_{2}'\rmd^2(\nu, x)|\nabla f(x)|,
\end{align*}
where in the equality we use the zero-homogeneity of $\nabla\Phi$ and inequality follows \eqref{L: Unif Elliptic} for a constant $c_{2}' =c_{2}'(\Phi)>0$. Since $|\nabla f(x)| \geq 1/2M_\Phi$ in $B_{\bar{r}}$ by Lemma~\ref{L:fnc f}(iii), we let $c_2 = c_2( \Phi, f):= c_{2}'/2M_\Phi > 0$ to find 
\begin{equation}\label{eqn: B bound}
  II=    B_\Phi(\nu):X(x)\otimes\nabla f(x) \geq   c_2 \,\rmd^2(\nu, x).
\end{equation}
Putting \eqref{eqn: integrand A bound} and \eqref{eqn: B bound} together, 
and introducing the short-hand
notation $\ell_x := \eta(f(x))$,
we see that for $(x, \nu ) \in \Omega_{\bar{\eta}} \times \bbS^n$, the expression in \eqref{eqn: first var in sec 33} has the upper bound
\begin{align}\label{eqn: integrand}
   B_\Phi(\nu) : DY(x) \leq  - \ell_x m_\Phi + c_1 \ell_x  \, \rmd(\nu, x) - c_2 \,  \rmd(\nu, x)^2= q_x(\rmd(\nu,x)),
\end{align}
where $q_x$ is the  quadratic polynomial  defined by $q_x(t) := -m_\Phi\ell_x + c_1\ell_x\, t - c_2\,  t^2$. 
Direct computation shows that $\max_t q_x(t) = -m_\Phi \ell_x + c_1^2 \ell_x^2 /2c_2$. Since $\ell_x \leq \bar{\eta}$ for $x \in \Omega_{\bar{\eta}}$, we may choose $\bar{\eta}$ sufficiently small depending on $c_1, m_\Phi$ and $c_2$, so that $\max_t q_x(t) \leq -m_\Phi \ell_x /2$. Keeping in mind that $|Y(x)| \geq m_\Phi \ell_x$ for any $x \in \Omega_{\bar{\eta}}$, we see that \eqref{eqn: bound in claim} holds.
\end{proof}

\section{A blowup given by finitely many half-hyperplanes}\label{S: BU union of planes}
\label{S: BU analysis} 

The main goal of this section is to prove Proposition~\ref{P: flat BU under fin dens} below.
 For $\eta \in (0, \pi)$, consider the closed wedge $W_\eta$ with spine $\R^{n-1}\times\{0\}\times\{0\}$, and opening angle $\eta \in (0,\pi)$ defined by 
\begin{equation}\label{eqn: wedge 3.2}
 W_\eta:= \Big\{x\in \overline{H}\ : \  \mfrac{x_{n+1}}{\tan \eta} \leq  x_n \Big\}.
\end{equation}
Here we interpret $\frac{1}{\tan \pi/2} = \frac{\cos \pi/2}{\sin \pi/2}  = 0$. We also set $\Pi_{\eta}:= \partial W_{\eta}\cap H$.

\begin{proposition}\label{P: flat BU under fin dens}
Let $V$ be a rectifiable $n$-varifold in $\R^{n+1}$ such that 
\begin{equation}\label{E: stat condition prop BU are 1/2planes}
{\delta_\Phi V(X)}= 0\qquad \mbox{ for all }X\in\TVF.
\end{equation}
In addition, we assume that {there exists $\e>0$ such that }
\begin{align}
	\Theta^*_n(V, 0) &< +\infty, \label{A:Finite-Density}\\
    \Theta^n(V, p) & \in \mathbb{N}  \text{ for } \| V\|\llcorner H\text{-a.e. }p,\label{A:V is integral in H}\\
    \|V\|(B_r(x)) &\geq \epsilon r^n, \mbox{ for }  \|V\|\text{-a.e. } x \in B_{\epsilon}\cap H\mbox{ and }r < x_{n+1}\,.\label{A:unif lower bound mass ratio}
\end{align}
If the support of $\|V\|$ is contained in $\partial H\cup W_\eta$ for a wedge $W_\eta$ of opening angle $\eta\in (0,\pi)$, {then there exist $V_0 \in \mathcal{B}_0(V)$, $J\in\mathbb{N}\cup\{0\}$,  $\{\eta_1,\ldots, \eta_J\}\subset (0, \eta]$,  and $\{\theta_1,\ldots, \theta_J\}\subset \mathbb{N}$  satisfying}
\begin{align}
    \label{eqn: blowup union of planes}
	&V_0 \llcorner (H\times \bbS^n ) =  \sum_{i=1}^J \theta_i\,  \var(\Pi_{\eta_i}),\quad\mbox{ where }\Pi_{\eta_i} := \partial W_{\eta_i}\cap H\,,\\
    \label{eqn:blowupwetting}
  &  V_0\llcorner ( \partial H \times \bbS^n) = \|V_0\| \llcorner \partial H \otimes \Big(\frac{1}{2} \delta_{\nu_H} + \frac{1}{2} \delta_{-\nu_H}\Big).
\end{align}
\end{proposition}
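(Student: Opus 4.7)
The plan is to extract a subsequential blowup $V_0 \in \cB_0(V)$ with sufficient structure and then pin down its support in $H$ by iteratively applying a strong maximum principle. By the finite upper density \eqref{A:Finite-Density}, any blowup sequence $V_k := (\iota_{0,r_k})^\# V$ with $r_k \downarrow 0$ is locally mass-bounded, so up to subsequence $V_k \to V_0$ in the varifold sense. Since $W_\eta$ is a cone with apex $0$ and $\partial H$ is dilation-invariant, $\spt\|V_0\| \subseteq \partial H \cup W_\eta$. Stationarity \eqref{E: stat condition prop BU are 1/2planes} is scale-invariant for vector fields in $\TVF$, so it passes to the limit: $\Fv V_0(X) = 0$ for every $X \in \TVF$.

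Next I would argue that $V_0 \llcorner (H \times \bbS^n)$ is integer rectifiable. The lower mass density bound \eqref{A:unif lower bound mass ratio} survives the dilations (the constraint $r < x_{n+1}$ scales consistently) and passes to $V_0$ for $\|V_0\|$-a.e.\ $x \in H$. Since every compactly supported vector field on $H$ lies in $\TVF$, $V_0 \llcorner H$ is genuinely anisotropically stationary, so positive lower density gives rectifiability via the theorem of De Philippis--De Rosa--Ghiraldin \cite{de2018rectifiability}. The integer multiplicity assumption \eqref{A:V is integral in H} then transfers to the limit through the standard compactness of integral varifolds under positive lower density bounds.

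The crux of the proof is showing that $\spt \|V_0\|\cap H$ is a union of finitely many half-hyperplanes $\Pi_{\eta_i}$. Define $\eta_1 := \sup\{ \eta' \in (0, \eta] : \Pi_{\eta'} \cap \spt\|V_0\| \cap H \neq \emptyset\}$, which by hypothesis satisfies $\eta_1 \leq \eta$. If $\eta_1 = 0$ then $V_0 \llcorner H \equiv 0$ and we are done with $J=0$. Otherwise, a compactness/closedness argument (shifting along the spine direction and invoking closedness of $\spt\|V_0\|$ in $\R^{n+1}$) produces a point $p \in \spt\|V_0\| \cap \Pi_{\eta_1} \cap H$ with $p_{n+1} > 0$. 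Near such a $p$, the support is contained in the closed half-space on one side of the extension of $\Pi_{\eta_1}$, so the anisotropic Solomon--White strong maximum principle \cite{solomon1989strong} implies that $\Pi_{\eta_1}$ locally lies in $\spt\|V_0\|$. A clopen argument on $\Pi_{\eta_1} \cap H$ -- closedness from $\spt\|V_0\|$ being closed, openness from reapplying the maximum principle at each contact point -- extends this to all of $\Pi_{\eta_1}\cap H$. Rectifiability, integrality, and the constancy theorem then give $V_0 \llcorner (\Pi_{\eta_1} \times \bbS^n) = \theta_1 \var(\Pi_{\eta_1})$ for some $\theta_1 \in \mathbb{N}$. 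Iterating the same argument on the residual varifold (together with the appropriate wetting mass on $\partial H$ to preserve stationarity under $\TVF$), with $\eta$ progressively replaced by smaller angles, produces the remaining half-hyperplanes. Finiteness of $J$ follows since $\Theta^*_n(V_0, 0) \geq \tfrac{1}{2}\sum_i \theta_i$, and the left-hand side is finite by \eqref{A:Finite-Density}. Finally, \eqref{eqn:blowupwetting} is automatic: rectifiability of $V_0\llcorner\partial H$ together with the fact that its support lies on the $n$-plane $\partial H$ forces the $\bbS^n$-component to split equally between $\pm \nu_H$.

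The principal technical obstacle is ensuring that the iterative step preserves stationarity and integrality of the residual varifold. The half-hyperplane $\Pi_{\eta_i}$ meets $\partial H$ along the spine at a nonzero angle, so $\var(\Pi_{\eta_i})$ itself is not stationary against $\TVF$ (it contributes a boundary distribution along the spine). Consequently, subtracting $\theta_i \var(\Pi_{\eta_i})$ from $V_0$ forces one to simultaneously subtract an appropriate wetting contribution from $V_0 \llcorner \partial H$ in order to preserve \eqref{E: stat condition prop BU are 1/2planes}. Tracking this bookkeeping carefully -- and verifying that the residual remains a nonnegative rectifiable varifold inheriting the lower density bound so the induction can continue -- is the most delicate part. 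This is where the assumption $\omega \leq 0$ enters crucially, guaranteeing that the wetting contribution in \eqref{eqn: weighted varifold} has the correct sign to allow the subtraction.
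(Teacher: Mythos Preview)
Your proposal has two genuine gaps.

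\textbf{Producing a touching point on $\Pi_{\eta_1}$.} You fix a single blowup $V_0$ and assert that a ``compactness/closedness argument (shifting along the spine direction)'' yields $p\in\spt\|V_0\|\cap\Pi_{\eta_1}\cap H$. This is not justified: in the anisotropic setting blowups are \emph{not cones}, so $\spt\|V_0\|$ could approach $\Pi_{\eta_1}$ only at infinity or only along the spine in $\partial H$, never touching in $H$, and Solomon--White would never trigger. The paper circumvents this by working with the \emph{entire} compact set $\cB_0(V)$ rather than a fixed element: it defines the minimal wedge angle $\eta_1=\min_{\bar V\in\cB_0(V)}\eta(\bar V)$ and picks a minimizer $V_1$. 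If $\Pi_{\eta_1}\not\subset\spt\|V_1\|$, one solves an anisotropic Dirichlet problem below the support, applies Hopf's lemma at the spine, and shows that a \emph{further} blowup $\widetilde V\in\cB_0(V_1)\subset\cB_0(V)$ lies in a strictly smaller wedge, contradicting minimality of $\eta_1$. The induction then passes to $\cB_0(V_1)$, not to a residual of $V_1$ itself. Your clopen argument would work in the isotropic case (where blowups are cones), but not here.

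\textbf{Residual stationarity and the boundary conclusion.} Your bookkeeping for the iteration is off. There is no need to subtract a wetting piece (and $\omega$ does not appear in this proposition): since $\var(\Pi_{\eta_i})$ is stationary for $X\in C^1_c(H;\R^{n+1})$, the residual $\hat V:=(V_1-\var(\Pi_{\eta_1}))\llcorner(H\times\bbS^n)$ is stationary for such $X$, and that is all the maximum-principle step uses. Separately, your claim that \eqref{eqn:blowupwetting} is ``automatic'' from rectifiability is false: nothing proved so far says $V_0\llcorner(\partial H\times\bbS^n)$ is rectifiable. The paper obtains \eqref{eqn:blowupwetting} by testing $\delta_\Phi V'(X)=0$ against a specific tangential field $X=\varphi(|x'|)\eta(x_{n+1})\nabla\Phi(\nu_H)$ (Lemma~\ref{L: rectifiability on Gr component}), which forces the Grassmannian part to concentrate on $\pm\nu_H$ via the Fenchel inequality.
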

While the proposition is stated for blow-ups at the origin and for wedges with a specific spine and axis, it is clear that the statement holds in a more general form after applying translations and rotations.

The proof of Proposition~\ref{P: flat BU under fin dens} follows the ideas in \cite[Lemma 5.4]{philippis2015regularity} for sets of finite perimeter that  minimize an anisotropic capillary energy. Similar ideas arose earlier in \cite[Lemma 4.5]{Hardt77} and have since been used in \cite[Lemma A.2]{DPDRminmax} and  \cite[Lemma 5.2]{DPDRYminmax}.

First, we need two lemmas. The first shows that for a stationary varifold with support contained in $\partial H$, the Grassmannian part of $V$ is ``rectifiable". This will give additional information on the piece of the blow-up {in Proposition~\ref{P: flat BU under fin dens}} that lies in $\partial H$.
\begin{lemma}\label{L: rectifiability on Gr component}
Fix $x_0 \in \partial H$ and $r>0$. If $V$ is an $n$-varifold in $\R^{n+1}$ with $\spt(\| V\|)\cap B_r(x_0) \subset \partial H$ that satisfies $\delta_\Phi V(X) = 0$
    for any $X \in \TVF$, then
    \[
    V \llcorner (B_{r/2}(x_0) \times \mathbb{S}^n ) = {\| V\|} \llcorner B_{r/2}(x_0) \otimes \Big(\frac{1}{2} \delta_{\nu_H} + \frac{1}{2}\delta_{-\nu_H} \Big).
    \]
\end{lemma}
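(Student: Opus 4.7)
The key idea is to exploit the hypothesis $\spt\|V\|\cap B_r(x_0)\subset\partial H$ by testing $\delta_\Phi V=0$ against vector fields in $\TVF$ that vanish on $\partial H$ but have nontrivial normal derivative. Disintegrate $V\llcorner (B_r(x_0)\times\bbS^n)=\|V\|\otimes\mu_x$, where $\mu_x$ is a symmetric probability measure on $\bbS^n$ for $\|V\|$-a.e.\ $x$; the goal is to show $\mu_x=\tfrac12(\delta_{\nu_H}+\delta_{-\nu_H})$ on $B_{r/2}(x_0)$.

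For $\varphi\in C^1_c(B_r(x_0))$ and any $e\in\R^{n+1}$, consider the test field $X^e_\varphi(x):=\varphi(x)\,x_{n+1}\,e$. Since the $(n+1)$-component of $X^e_\varphi$ vanishes on $\partial H$, $X^e_\varphi\in\TVF$ regardless of $e$, and a direct computation yields $DX^e_\varphi|_{\partial H}=\varphi\,e\otimes e_{n+1}$, so
\[
B_\Phi(\nu):DX^e_\varphi(x) = \varphi(x)\bigl[\Phi(\nu)(e\cdot e_{n+1}) - (\nu\cdot e)\partial_{n+1}\Phi(\nu)\bigr].
\]
Taking $e=e_{n+1}$ and then $e=e_i$ for each $i\le n$, the stationarity identity $\delta_\Phi V(X^e_\varphi)=0$, the support hypothesis, and the arbitrariness of $\varphi$ yield the pointwise conditions
\[
\int_{\bbS^n}\Psi(\nu)\,d\mu_x(\nu)=0, \qquad \int_{\bbS^n}\nu_i\,\partial_{n+1}\Phi(\nu)\,d\mu_x(\nu)=0 \quad (i\le n),
\]
for $\|V\|$-a.e.\ $x$, where $\Psi(\nu):=\Phi(\nu)-\nu_{n+1}\partial_{n+1}\Phi(\nu)$.

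The decisive step is to combine these identities via the specific vector $v:=\nabla\Phi(e_{n+1})$, which satisfies $v_{n+1}=\Phi(e_{n+1})$ by Euler's identity. The function
\[
\ell(\nu) := \Phi(\nu)\Phi(e_{n+1}) - (\nu\cdot\nabla\Phi(e_{n+1}))\,\partial_{n+1}\Phi(\nu) = v_{n+1}\Psi(\nu) - \sum_{i\le n} v_i\,\nu_i\,\partial_{n+1}\Phi(\nu)
\]
then satisfies $\int_{\bbS^n}\ell\,d\mu_x=0$. On the other hand, the Fenchel inequality together with $\Phi^*\circ\nabla\Phi\equiv 1$ gives $|\nu\cdot\nabla\Phi(e_{n+1})|\le\Phi(\nu)$ and $|\partial_{n+1}\Phi(\nu)|\le\Phi(e_{n+1})$, hence $\ell\ge 0$ on $\R^{n+1}\setminus\{0\}$. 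Strict convexity of $\Phi$ (from uniform ellipticity) forces equality in both bounds only at $\nu=\pm e_{n+1}=\mp\nu_H$. Therefore $\spt\mu_x\subset\{\pm\nu_H\}$, and the varifold symmetry $\mu_x(S)=\mu_x(-S)$ yields $\mu_x=\tfrac12(\delta_{\nu_H}+\delta_{-\nu_H})$. The main analytical obstacle lies in identifying this specific non-negative combination: in the isotropic case the Test~1 integrand $\Psi=1-\nu_{n+1}^2\ge 0$ alone suffices, but for a general elliptic $\Phi$, $\Psi$ can change sign, and the judicious choice $v=\nabla\Phi(e_{n+1})$ that promotes Fenchel duality into a pointwise nonnegativity is essential.
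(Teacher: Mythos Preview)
Your proof is correct and follows essentially the same strategy as the paper. The paper tests $\delta_\Phi V=0$ directly with a single vector field $X(x)=\varphi(|x'-x_0'|)\,\eta(x_{n+1})\,\nabla\Phi(\nu_H)$ (with $\eta(0)=0$, $\eta'(0)=1$), which on $\partial H$ has $DX=\varphi\,\nabla\Phi(\nu_H)\otimes\nu_H$ and hence produces the integrand $\Phi(\nu_H)\Phi(\nu)-(\nabla\Phi(\nu_H)\cdot\nu)(\nabla\Phi(\nu)\cdot\nu_H)$ in one step; you instead test with the coordinate directions $e_1,\dots,e_{n+1}$ and then take the linear combination with weights $(\nabla\Phi(e_{n+1}))_i$, arriving at the identical nonnegative function $\ell$. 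Both then conclude via Fenchel and strict convexity that $\mu_x$ is supported on $\{\pm\nu_H\}$.
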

\begin{proof}
    Let $\varphi\in C_c^\infty ( [-r,r])$ be a nonnegative cutoff function with $\varphi \equiv 1$ on $[0,r/2]$ and fix $\eta \in C_c^\infty ([-1,1])$ with $\eta(0)=0$ and $\eta'(0) =1$. Writing $x = (x' ,x_{n+1}) \in \R^n \times [0,\infty)$ for $x \in \overline H$, consider the vector field
    \[
    X(x) = \varphi(|x' - x_0'|)\, \eta(x_{n+1})\, \nabla \Phi(\nu_H).
    \]
    Note that $X \in \TVF$ since $\eta(0)=0$, and $\eta^\prime(0)=1$ ensures $DX(x)  = \varphi (|x'-x_0'|) \nabla\Phi(\nu_H) \otimes \nu_H$ for $x \in \partial H$. Therefore, using that $\spt(\|V\|)\cap B_r(x_0) \subset\partial H$, we obtain
      \begin{align*}
         0 = \delta_\Phi V(X) &= \int \varphi(|x'-x_0^\prime|)\left[ \Phi(\nu_H) \Phi(\nu) - (\nabla \Phi(\nu_H) \cdot \nu) (\nabla \Phi(\nu) \cdot \nu_H)\right] \, \rmd V(x,\nu).
    \end{align*}
    
 Using the disintegration theorem for measures, there is a family $\{\mu_x\}_{x \in \overline{H}}$ of measures on $\bbS^n$ such that
       \begin{align*}
         0& =  \int \varphi(|x'-x_0^\prime|)\,\Big( \int \underbrace{\left[ \Phi(\nu_H) \Phi(\nu) - (\nabla \Phi(\nu_H) \cdot \nu) (\nabla \Phi(\nu) \cdot \nu_H)\right]}_{=:A} \, d\mu_x(\nu) \Big)\, {\rmd\|V\|}(x).
     \end{align*}
By the Fenchel inequality, the integrand $A$ is nonnegative and equals zero if and only $\nu = \pm \nu_H$. Since $\varphi$ is nonnegative and is strictly positive on $[0,r/2]$, we find that for $\|V\|$-a.e. $x \in B(x_0,r/2)$, the measure $\mu_x$ is supported on $\{\nu_H, -\nu_H\} \subset \bbS^n$. Hence, we conclude the proof thanks to the symmetry property of varifolds.
\end{proof}

\begin{lemma}\label{L:half planes are stationary}
Let $\eta \in (0,\pi)$ and $\omega_{\eta}= - \nu_{\eta}^\Phi\cdot \nu_H$ where $\nu_{\eta}$ is the normal to $\Pi_{\eta}$ with $\nu_{\eta}\cdot e_n >0$. Then $V_\eta := \var(\Pi_{\eta}) - \frac{\omega_{\eta}}{\Phi(\nu_H)}\var(\partial H \setminus W_\eta)$ satisfies $\delta_{\bPhi} V_{\eta}(X) = 0$ for any $X\in \TVF$.
\end{lemma}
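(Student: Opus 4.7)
The plan is a direct computation using the divergence theorem on each of the two half-hyperplane pieces of $V_\eta$, taking advantage of the fact that $V_\eta$ is supported on pieces with constant normals so that $B_\Phi(\nu_\eta)$ and $B_\Phi(\nu_H)$ are constant on each piece.

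The key algebraic identity is that for any constant unit vector $\nu$ and any $X\in C^1_c(\R^{n+1};\R^{n+1})$, on the hyperplane $\Pi$ with normal $\nu$ we have
\begin{equation*}
B_\Phi(\nu):DX \;=\; \mathrm{div}_{\Pi}\bigl(\Phi(\nu)\,X\;-\;(X\cdot\nu)\,\nabla\Phi(\nu)\bigr).
\end{equation*}
To verify this I will expand $B_\Phi(\nu):DX=\Phi(\nu)\,\mathrm{div}\,X-\nu\cdot(DX)\nabla\Phi(\nu)$, observe that $\mathrm{div}\,Y = \mathrm{div}_\Pi Y+\nu\cdot D_\nu Y$, and check that the normal component $\nu\cdot D_\nu Y$ of $Y:=\Phi(\nu)X-(X\cdot\nu)\nabla\Phi(\nu)$ vanishes thanks to Euler's identity $\nu\cdot\nabla\Phi(\nu)=\Phi(\nu)$.

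Armed with this identity, I apply the classical divergence theorem on each of the two half-hyperplanes making up $\mathrm{spt}\,V_\eta$. Both $\Pi_\eta$ and $\partial H\setminus W_\eta$ have boundary equal to the spine $S:=\R^{n-1}\times\{0\}\times\{0\}$; parameterizing $\Pi_\eta$ by the unit tangent $\tau=\cos\eta\,e_n+\sin\eta\,e_{n+1}$ (so that $\nu_\eta=\sin\eta\,e_n-\cos\eta\,e_{n+1}$ as prescribed), the outer conormal to $\Pi_\eta$ at $S$ is $-\tau$, while the outer conormal to $\partial H\setminus W_\eta$ (the half of $\partial H$ where $x_n<0$) is $+e_n$. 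Since $X\in\TVF$, we have $X\cdot\nu_H=0$ on $\partial H$, so at every point of $S$ the component $X_{n+1}$ vanishes; this simultaneously kills the $(X\cdot \nu_H)\nabla\Phi(\nu_H)$ term on the $\partial H$ side, and reduces $X\cdot(-\tau)=-\cos\eta\,X_n$ and $X\cdot \nu_\eta=\sin\eta\,X_n$ on the $\Pi_\eta$ side.

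Assembling the two boundary integrals, writing $Z:=\nabla\Phi(\nu_\eta)$, and using Euler's identity $\Phi(\nu_\eta)=\sin\eta\,Z_n-\cos\eta\,Z_{n+1}$, the $\Pi_\eta$ contribution simplifies (the $Z_n$ and $\Phi(\nu_\eta)$ terms cancel via $\cos^2\eta+\sin^2\eta=1$) to
\begin{equation*}
\int_{\Pi_\eta} B_\Phi(\nu_\eta):DX\,\rmd\cH^n \;=\; Z_{n+1}\int_{S} X_n\,\rmd\cH^{n-1},
\end{equation*}
while the $\partial H\setminus W_\eta$ contribution is simply $\Phi(\nu_H)\int_S X_n\,\rmd\cH^{n-1}$. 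Combining with the coefficient $-\omega_\eta/\Phi(\nu_H)$ and recalling $\omega_\eta=-\nu_\eta^\Phi\cdot\nu_H=Z_{n+1}$, the two contributions cancel exactly, yielding $\delta_{\bPhi}V_\eta(X)=0$. The only subtlety is keeping track of signs and of the cases $\eta<\pi/2$ vs.\ $\eta>\pi/2$, which the uniform parameterization by $\tau$ handles cleanly; I do not anticipate a genuine obstacle.
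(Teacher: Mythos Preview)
Your proof is correct and follows essentially the same approach as the paper: both integrate by parts on each half-hyperplane and match the resulting boundary terms on the spine $S=\Pi_\eta\cap\partial H$. Your identity $B_\Phi(\nu):DX=\mathrm{div}_\Pi\bigl(\Phi(\nu)X-(X\cdot\nu)\nabla\Phi(\nu)\bigr)$ is exactly the observation that $B_\Phi(\nu):DX=\mathrm{div}_\Pi(B_\Phi(\nu)^T X)$, and the paper expresses the resulting boundary term as $\int_S X\cdot B_\Phi(\nu_\eta)\nu_0$ with $\nu_0=-\tau$, then computes $e_n\cdot B_\Phi(\nu_\eta)\nu_0=-\nu_\eta^\Phi\cdot\nu_H=\omega_\eta$ by decomposing $\nu_0=a\nu_H+b\nu_\eta$, whereas you carry out the same computation in explicit $(e_n,e_{n+1})$ coordinates; the cancellation via Euler's identity is the same in both.
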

\begin{proof}
Fix $X$ any compactly supported vector field in $\overline{H}$ tangent to $\partial H$. Integrating by parts, we obtain the following equality:
\[
	\Fv V_{\eta}(X) = \int_{\Pi_{\eta}\cap H}B_\Phi(\nu_{\eta}): DX(x) \rmd \cH^n(x) - \omega_{\eta}\int_{\Pi_{\eta}\cap\partial H}X(x)\cdot e_n\rmd\cH^{n-1}.
\]
Next, denote by $\nu_0$ the outward unit normal to $\Pi_{\eta}\cap H$ in $\Pi_{\eta}$, integrating by parts, we get
\[
	\Fv V_{\eta}(X) = \int_{\Pi_{\eta}\cap\partial H}X(x)\cdot B_{\Phi}(\nu_{\eta})\nu_0\rmd \cH^{n-1}(x) - \omega_{\eta}\int_{\Pi_{\eta}\cap\partial H}X(x)\cdot e_n\rmd\cH^{n-1}.
\]
Then, we have $B_{\Phi}(\nu_{\eta})\nu_0 =\omega_{\eta}e_n$. Next, write $\nu_0 = a\nu_H +b\nu_{\eta}$ and notice that $b=(\nu_0\cdot e_n)/(\nu_{\eta}\cdot e_n)$. Therefore,
$$a = \nu_{\eta}\cdot e_n - b\nu_{\eta}\cdot\nu_H = \frac{(\nu_{\eta}\cdot e_n)^2 - (\nu_0\cdot e_n)(\nu_{\eta}\cdot \nu_H)}{\nu_{\eta}\cdot e_n} = \frac{1}{\nu_{\eta}\cdot e_n}.$$

From this, we conclude the proof as follows
$$ e_n\cdot B_{\Phi}(\nu_{\eta})\nu_0 = -a (\nu_{\eta}\cdot e_n)(\nu_{\eta}^\Phi\cdot \nu_H) = - \nu_{\eta}^\Phi\cdot \nu_H. $$
\end{proof}

We now prove Proposition~\ref{P: flat BU under fin dens}.

\begin{proof}[Proof of Proposition~\ref{P: flat BU under fin dens}]
We proceed in several steps. After some initial observations, we prove \eqref{eqn: blowup union of planes} in Steps 1-4 with an induction argument before establishing \eqref{eqn:blowupwetting} in Step 5.\\

{\it Step 0:} The collection $\mathcal{B}_0(V)$ of blowups of $V$ at $0$ is nonempty and compact by \eqref{A:Finite-Density}, and each $V_0 \in \cB_0(V)$ has  $\spt(\|V_0\|)\subset W_\eta \cup \partial H$  and $\Theta^*_n(V_0,0) \leq  \Theta^*_n(V,0)$. By \eqref{E: stat condition prop BU are 1/2planes}, any $V_0\in \cB_0(V)$ satisfies $\delta_\Phi V(X) =0$ for $X \in \TVF$.

Moreover, for any $V_0 \in \cB_0(V)$,  the restriction $V_0\llcorner (H\times \bbS^n)$ is an integral varifold, i.e. \eqref{A:V is integral in H} holds for $V_0$. To see this, let $V_0 \in \cB_0(V)$ be a blowup arising  as a limit of $V_{k}:= \left(\iota_{0, r_k}\right)^{\#} V$ (c.f. \eqref{eqn: blowup seq}) for a sequence of scales $r_k\searrow 0$. If $x\in \spt(\|V_0\|)\cap H$, there is $\spt( \|V_k\|) \ni r_k^{-1}x_k\to x$ (thus  $\spt(\|V\|) \ni x_k\to 0)$ such that $V$ satisfies the assumption \eqref{A:unif lower bound mass ratio} at each $x_k$. By \eqref{A:unif lower bound mass ratio}, for any $0<s<x \cdot e_{n+1}$, we have
\[ \frac{\|V_0\|(B_s(x))}{\omega_n s^n} = \lim_{k\to\infty}\frac{\|V_{0,r_k}\|(B_s(x))}{\omega_n s^n} = \lim_{k\to\infty}\frac{\|V\|(B_{r_k s}(r_k x))}{\omega_n (r_k s)^n} \geq \epsilon >0.\]
Therefore, $\Theta_*^n(V_0, x) > 0$ for any $x\in \spt(\|V_0\|)\cap H$. This and the integrality assumption \eqref{A:V is integral in H} for $V$ allow us to apply \cite[Theorem 4.1]{de2018minimization} which ensures that $V_0$ satisfies \eqref{A:V is integral in H}.
\\

{\it Step 1:} For each $\bar{V} \in \cB_0(V)$, let 
\begin{equation*}
	{\eta}(\bar{V}):= \inf\big\{\alpha : \spt(\|\bar{V}\|) \cap H \subset W_{\alpha} \big\} \in [0, \eta]
\end{equation*}
be the angle of the smallest wedge of the form \eqref{eqn: wedge 3.2} containing $\spt\|V_0\|\cap H$. It is not difficult to verify that $\bar{V}\mapsto {\eta}(\bar{V})$ is lower semicontinuous with respect to varifold convergence. 
So, since $\cB_0(V)$ is compact, this function achieves its minimum ${\eta}_1$ at some ${V}_1 \in\cB_0(V)$. If ${\eta}_1= 0$, then $\spt(\|{V}_1\|)\cap H = \emptyset$ and thus $\spt(\|{V}_1\|)\subset\partial H$. In this case,  we take $V_0 = V_1$ and $J=0$ and the first conclusion \eqref{eqn: blowup union of planes} holds vacuously.\\

{\it Step 2:} We henceforth assume ${\eta}_1\in (0,\eta]$.
We first claim that 
\begin{equation}\label{eqn: claim hyperplane contain}
    \Pi_{\eta_1} \subset \spt(\|{V}_1\|).
\end{equation}
To this end, consider the lower semicontinuous function $w:  \R^{n-1}\times\R_+\to (-\infty, +\infty]$ defined  by
\begin{equation*}
	w(z) = \inf \big\{ t\in \R: (z^\prime, t, z_{n+1}) \in \spt(\|{V}_1\|){\cap W_{\eta_1}} \big\} \quad\mbox{ for } z = (z^\prime, z_{n+1})\in \R^{n-1}\times\R_+.
\end{equation*}

{Notice that $w(0)\geq 0$ and l}et  $\bp_n(x) := (x_1,\ldots, x_{n-1}, x_{n+1})$ for $x\in\R^{n+1}$. By construction,  
 $\spt(\|{V}_1\|){\cap W_{\eta_1}}$ is contained in the epi-graph  $\{ (z', t, z_{n+1}) :  z = (z^\prime, z_{n+1})\in \R^{n-1}\times\R_+, t\geq w(z)\}$, {and $(z',w(z), z_{n+1}) \in \spt(\|{V}_1\|)$ whenever $w(z)<\infty$}.
In particular, the graph of $w$ is contained in the wedge $W_{{\eta}_1}$, i.e.,  
\begin{align}\label{E:w_F:defi}
   w(\bp_n(x)) \geq \frac{x_{n+1}}{\tan {\eta}_1} \quad \text{ for } x \in W_{{\eta}_1}\,.
\end{align}

Now, suppose by way of contradiction that \eqref{eqn: claim hyperplane contain} does not hold, and choose a point $\hat{x}\in \Pi_{\eta_1}$ with $\hat{x} \not \in  \spt(\|{V}_1\|)$. Since $\spt\|{V}_1\|$ is closed, strict inequality holds in \eqref{E:w_F:defi} for $x=\hat{x}.$ 
So, setting $r:= |\bp_n(\hat{x})|$,  $\tilde{D}_{r}^+ := \{x\in H : x_n = 0\mbox{ and }|x| < r\}\subset \R^{n+1}$, and $D_{r}^+:=\bp_n(\tilde{D}_{r}^+)\subset\R^{n-1}\times\R_+$, we may find a function  $\varphi \in C^{1,1}( D_{r}^+)\cap \mathrm{Lip}(\overline{ D}_{r}^+)$ satisfying
\begin{align}
	w(z) \geq \varphi(z)\geq  \frac{z_{n+1}}{\tan{\eta}_1}, \quad \forall z\in \partial D_{r}^+ \label{E:w_Fmax-phi}, \\
	\varphi (\bp_n(\hat{x})) > \frac{\hat{x}_{n+1}}{\tan\eta_1}
  \label{E:phi-in-between},\\
	\varphi \equiv 0 \mbox{ on } D_{r}\cap\{ (z', z_{n+1}) \in \R^{n-1} \times \R_+:z_{n+1}  = 0\}. \label{E:phi-vanishes-on-bdrH}
\end{align}

Let $\Phi^\# : \R^n \to \R$ be the one-homogeneous function defined on $\bbS^{n-1}$ by $\Phi^{\#} (\nu) := \Phi(\nu, -1)$.
By part two of \cite[Lemma 2.11]{philippis2015regularity} applied to $e=e_{n+1}, H, \varphi$,  there is a solution $u \in$ $C^{1,1}\left(D_{r}^{+}\right) \cap \mathrm{Lip}(\overline{D}_{r}^+)$ to
\begin{equation}\label{E:PDE}
	\begin{cases}\operatorname{div}(\nabla \Phi^\# (\nabla u))=0, & \text { in } D_{r}^{+}, 
	\\ u=\varphi, & \text { on } \partial D_{r}^{+}.\end{cases}
\end{equation}

We claim that $u \leq w$ in $\overline{D}_{r}^+$ by the maximum principle. Indeed, suppose by way of contradiction that the lower semicontinuous function $f:= w - u$ on $\overline{D}_{r}^+$ has $f(\hat{z})<0$ at a point $\hat{z} \in \overline{D}_{r}^+$ where $f$ achieves its minimum.  Note that $\hat{z} \in D_{r}^+$ by \eqref{E:w_Fmax-phi}.
Let $\hat{u} := u - |f(\hat{z})|$. Since $\hat{u}$ solves \eqref{E:PDE} (with boundary data $\varphi - |f(\hat{z})|$), its graph induces a varifold $V_{\hat{u}}$ in the open cylinder $C_*:=\{ x \in H : \bp_n(x) \in D_{r}^+\}$ satisfying $\delta_\Phi V_{\hat{u}}(X)=0$ for all $X \in C^1_c(C_*, \R^n).$
Moreover, $(\hat{z}^\prime, \hat{u}(\hat{z}), \hat{z}_{n+1})\in \spt(\|V_{\hat{u}}\|) \cap \spt(\|{V}_1\|)$ and $\spt(\|{V_1}\|)$ lies in one connected component of $C_*\setminus \spt(\|V_{\hat{u}}\|)$. Thus $\spt(\|V_{\hat{u}}\|) \subset \spt(\|{V}_1\|)$ by Solomon-White's maximum principle  \cite{solomon1989strong}.
Since $\spt(\|{V}_1\|){\cap W_{\eta_1}}$ is contained in the epi-graph of $w$, this guarantees that $\hat{u} \geq w$ on $\overline{D}^{{+}}_{r}$, a contradiction to the assumptions that $\hat{u}(0) = - |f(\hat{z})|<0$ and $w(0)\geq 0$. Thus $u \leq w$.

Now, let $\widetilde{V}\in\cB_0(V_1)$. 
Since  $\spt(\|{V}_1\|) \cap C_*$ is contained in the epi-graph of $u$ (as $u \leq w$ on $ \overline{D}_{r}^+$) and $u\equiv 0$ on {$\{z_{n+1}=0\}$} by \eqref{E:phi-vanishes-on-bdrH}, we find that
	\begin{equation*}\label{E:noncrossing-containment2}
			\spt(\|\widetilde{V}\|) \subset \left\{ x : x_n \geq \partial_{z_{n+1}}u(0) x_{n+1} \right\}\,.
	\end{equation*}	
	Applying Hopf lemma to $u$ and using \eqref{E:w_Fmax-phi}, we find that $\partial_{z_{n+1}}u(0) > 1/\tan{\eta_1}$, i.e. $\spt(\|\widetilde{V}\|)\subset W_{\tilde{\eta}}$
    where  $\tilde{\eta} < {\eta}_1 \in (0,\pi)$ has $\tan \tilde{\eta} = 1/\partial_{z_{n+1}}u(0)$. 
    This contradicts the definition of ${\eta}_1$. We conclude that \eqref{eqn: claim hyperplane contain} holds.

   Now, since ${V}_1\llcorner (H\times \bbS^n)$ is an integer rectifiable varifold and \eqref{eqn: claim hyperplane contain} holds, 
   \[
   \hat{V}_1 := (V_1 - \var(\Pi_{{\eta}_1}) )\llcorner(H\times \bbS^n)
   \]
   is a (non-negative) integer rectifiable varifold, and $\spt(\|\hat{V}_1\|) \subset W_{\eta_1}$. Moreover, $\delta_\Phi \hat{V}_1 (X) =0$ for every $X \in C^1_c(H; \R^n)$,  since $\delta_\Phi{V}_1(X) = \delta_\Phi \var(\Pi_{{\eta}_1})(X) = 0$ for all such vector fields.  (Note that this does not hold for all $X \in \TVF$, as $\delta_\Phi \var(\Pi_{{\eta}_1})$ does not vanish along all $X \in \TVF$ unless $\eta_1= \pi/2$.)\\
    
{\it Step 3:} We now argue by induction.  For $N\in\mathbb{N}$ with $N\geq 2$, assume there are $V_{N-1} \in \cB_0(V)$ and ${\eta}_1 \geq \dots \geq {\eta}_{N-1}>0$ such that 
\begin{equation}
    \label{eqn: IH part 1}
\hat{V}_{N-1} := \Big[V_{N-1} - \sum_{i=1}^{N-1} \var(\Pi_{{\eta}_i})\Big] \llcorner (H \times \bbS^n)
\end{equation}
is a (non-negative) integer rectifiable varifold satisfying 
\begin{equation}
    \label{eqn: IH part 2}
\spt(\|\hat{V}_{N-1}\|)\subset W_{{\eta}_{N-1}} \qquad \text{ and } \qquad \delta_\Phi \hat{V}_{N-1}(X) = 0 \text{ for all }X \in C^1_c(H; \R^n).
\end{equation}
By Steps 1 and 2, this inductive hypothesis holds for $N=2$ (in the case $\eta_1>0$ as above).

Analogously to Step 1, let  ${V}_N \in \cB_0(V_{N-1})\subset \cB_0(V)$ be a minimizer of the lower-semicontinuous function 
\[
{\eta}(\bar{V}) =\inf\Big\{ \alpha :  \spt\Big(\Big\| \Big[\bar{V} - \sum_{i=1}^{N-1} \var(\Pi_{{\eta}_i})\Big]\Big\|\llcorner H\Big) \subset W_{\alpha} \Big\} \in [0, \eta_{N-1}]
\]
in the compact set $\cB_0({V}_{N-1})$ and set  ${\eta}_N= {\eta}({V}_N)$. Let  $\tilde{V}_N= [V_{N} - \sum_{i=1}^{N-1} \var(\Pi_{{\eta}_i})] \llcorner (H \times \bbS^n)$ and observe that $\tilde{V}_N$ is a (non-negative) integer rectifiable varifold that satisfies \eqref{eqn: IH part 2} with $\tilde{V}_N$ in place of $\hat{V}_{N-1}$.

If ${\eta}_N = 0$, then $V_N\llcorner (H\times \bbS^n) =  \sum_{i=1}^{N-1} \var(\Pi_{{\eta}_i})$. In this case, we take $V_0= V_N$ and see that, up to relabeling so that multiplicities in $\eta_i$ are expressed through the integer-valued multiplicities $\theta_i$, the first conclusion \eqref{eqn: blowup union of planes} holds.

Next suppose $\eta_N>0.$ By repeating the argument of Step 2---which only used that $\delta_\Phi V_1(X) =0$ for $X \in C^1_c(H; \R^n)$---with $\tilde{V}_{N}$ in place of $V_1$, we find that $\Pi_{\eta_N} \subset \spt(\|\tilde{V}_{N}\|).$  Together with the fact that $\tilde{V}_N$ is integer rectifiable, this guarantees that 
\[
\hat{V}_N : = \tilde{V}_N-  \var(\Pi_{{\eta}_N}) = \Big[V_{N} - \sum_{i=1}^{N} \var(\Pi_{{\eta}_i})\Big] \llcorner (H \times \bbS^n)
\]
is a (non-negative) integer rectifiable varifold with $\spt(\|\hat{V}_N\|) \subset W_{\eta_N}$. Moreover, for all $X \in C^1_c(H,\R^n)$, we have $\delta_\Phi \tilde{V}_N (X)= \delta_\Phi \var(\Pi_{{\eta}_N})(X)=0$ and the same holds for $\delta_\Phi \hat{V}_N.$ In other words, \eqref{eqn: IH part 1}-\eqref{eqn: IH part 2} hold with $N$ in place of $N-1.$\\

{\it Step 4:} We now show that induction terminates after finitely many iterations, i.e. ${\eta}_N= 0$ for some $N \in \mathbb{N}$. Note that if $\eta_N>0,$ $V_N{\llcorner (H \times \bbS^n)}$ is the sum of the varifolds $\sum_{i=1}^N \var(\Pi_{\eta_i})$ and $\hat{V}_N$ as defined in \eqref{eqn: IH part 1}. Thus  $\Theta_n^*({V}_N, 0 ) \geq \Theta_n(\sum_{i=1}^N \var(\Pi_{\eta_i}), 0 ) = N/2$, where the second identity holds since that the varifold induced by $\Pi_{\eta}$ has density $1/2$ for any $\eta \in (0,\pi)$. On the other hand, as observed in Step 0, $\Theta_n^*({V}_N, 0) \leq \Theta_n({V}, 0)=:\bar\Theta$, where $\bar{\Theta}<\infty$ by assumption \eqref{A:Finite-Density}. Thus, $\eta_N=0$ for $N > 2\bar{\Theta}.$ Thus, by Steps 1-4, we have obtained $V_0\in \cB_0(V)$ (specifically, $V_0=V_N$ for the first $N$ with $\eta_N=0$) that satisfies \eqref{eqn: blowup union of planes}.\\

{\it Step 5:} We now prove that $V_0$ satisfies \eqref{eqn:blowupwetting} using Lemma~\ref{L: rectifiability on Gr component} and the structure \eqref{eqn: blowup union of planes} of $V_0 \llcorner(H \times \bbS^n)$. For $i=1,\dots, N,$ consider the (possibly signed)  varifolds 
$$V_{\eta_i} := \var(\Pi_{\eta}) - \frac{\omega_{\eta_i}}{\Phi(\nu_H)}\var(\partial H \cap \{x_n <0\})$$
with $\omega_{\eta_i} = -\nu_H \cdot \nabla \Phi((0,\dots,0, \sin \eta_i, -\cos \eta_i))$. Recall from Lemma~\ref{L:half planes are stationary} that $\delta_{\bPhi} V_{\eta_i}(X) = 0$ for any $X\in \TVF$.
Consequently, $V' = V_0 - \sum_{i=1}^N {V}_{\eta_i}$ is a signed varifold with $\spt (\|V'\|) \subset \partial H$ satisfying $\delta_\Phi V' (X) = 0$ for all $X \in \TVF.$ To turn this 
into a ({non-negative}) varifold to which we can apply Lemma~\ref{L: rectifiability on Gr component}, let $I \subset \{ 1,\dots ,N\}$ be the set of indices such that $\omega_{\eta_i}<0$, and let $V'' = V' - \sum_{i\in I} \frac{\omega_{\eta_i}}{\Phi(
\nu_H)} \var(\partial H )$. Now $V''$ is a (non-negative) varifold with $\spt(\|V''\|)\subset \partial H$ and $ \delta_\Phi V''(X) =0$ for all $X \in \TVF$. So, Lemma~\ref{L: rectifiability on Gr component} guarantees that 
$V'' = \|V''\| \otimes (\frac{1}{2} \delta_{\nu_H} + \frac{1}{2} \delta_{-\nu_H}).$ By construction, the same property holds for $V_0$, i.e. \eqref{eqn:blowupwetting} holds.
\end{proof}

\begin{remark}\label{rmk: mono formula}
In the case of the area functional,  assumptions \eqref{A:Finite-Density} and \eqref{A:unif lower bound mass ratio} are straightforward consequences of the classical monotonicity formula for stationary varifolds. In fact, in Step 0 and in the induction Step 3 we used  \eqref{A:Finite-Density}  to show $\cB_0(V)$ is non-empty, and we used assumption \eqref{A:unif lower bound mass ratio} together with the integrability theorem \cite[Theorem 4.1]{de2018minimization} to show 
 that every $V_0 \in \cB_p(V)$ has integer density $\|V_0\|$-a.e. on $H$; these are classical properties for the isotropic setting that we show in  Lemma~\ref{lemma: cone}.
 Moreover, one could give a substantially simplified proof of Proposition~\ref{P: flat BU under fin dens} in this case by tilting hyperplanes and using the fact that any blowup is a cone to guarantee that the varifold cannot touch the hyperplane solely at infinity.
\end{remark}

\begin{lemma}\label{lemma: cone}
    Let $V$ be a rectifiable $n$-varifold with $\spt\|V\| \subset \overline{H}$ such that, for some $\lambda \in\R$,
    \begin{equation}\label{eqn: isotropic stationary for the weighted varifold}
    \int  \left(\mathrm{Id} - \nu\otimes\nu\right) : DX_x \, \rmd V(x,\nu)  =  \lambda \int  X(x) \cdot \nu \, \rmd V(x,\nu)  \qquad \text{ for all }X \in \TVF\,.
        \end{equation}
For any $p \in \spt \| V\|\cap \partial H$, the blowup set $\cB_{p}(V)$ is nonempty and any $V_0 \in \cB_{p}(V)$ satisfies \eqref{eqn: isotropic stationary for the weighted varifold} with $\lambda =0$. Furthermore its support $\spt\|V_0\|$ is a cone in $\overline{H}$.
Moreover, if $\Theta^n(V,p) \in \mathbb{N}$ for $\| V\| \llcorner H$-a.e. $p$, then the same is true for any $V_0 \in \cB_p(V)$. 
\end{lemma}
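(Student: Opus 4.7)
The plan is to first establish an Allard-type monotonicity formula for $V$ centered at any $p\in\partial H$. The crucial observation is that for such $p$, any radial cutoff $X(x)=\zeta(|x-p|/r)(x-p)$ lies in $\TVF$: indeed, $X\cdot\nu_H=-\zeta(|x-p|/r)\,x_{n+1}$ vanishes on $\partial H$. Inserting such $X$ into \eqref{eqn: isotropic stationary for the weighted varifold} and running the classical Allard computation yields that $r\mapsto e^{|\lambda|r}r^{-n}\|V\|(B_r(p))$ is non-decreasing on $(0,r_0]$. In particular $\Theta_n^*(V,p)<+\infty$, so standard compactness of Radon measures gives $\cB_p(V)\neq\emptyset$. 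Moreover the limit $\Theta_n(V,p):=\lim_{r\to 0^+}r^{-n}\|V\|(B_r(p))$ exists and is finite.

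\textbf{Stationarity with $\lambda=0$ in the blowup.} A direct scaling argument shows that the rescaled varifolds $V_{p,r_k}:=\iota_{p,r_k}^\#V$ satisfy \eqref{eqn: isotropic stationary for the weighted varifold} with $\lambda$ replaced by $\lambda r_k$. For fixed $X\in\TVF$, the right-hand side is bounded by $|\lambda|\,r_k\,\|X\|_\infty \|V_{p,r_k}\|(\spt X)$, which vanishes as $r_k\to 0$ thanks to the uniform mass bound from Step~1. Passing to a varifold limit yields that every $V_0\in\cB_p(V)$ satisfies \eqref{eqn: isotropic stationary for the weighted varifold} with $\lambda=0$. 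Since $\iota_{p,r_k}$ preserves $\overline H$ when $p\in\partial H$, we also have $\spt\|V_0\|\subset\overline H$.

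\textbf{Cone property and integrality.} For $V_0$, which is now stationary with $\lambda=0$ (again with respect to $\TVF$), the equality form of Allard's monotonicity at the origin gives
\[
\frac{\|V_0\|(B_r)}{r^n}-\frac{\|V_0\|(B_s)}{s^n}=\int_{B_r\setminus B_s}\frac{|y^\perp|^2}{|y|^{n+2}}\,\rmd \|V_0\|(y), \qquad 0<s<r.
\]
On the other hand, for every $\rho$ with $\|V_0\|(\partial B_\rho)=0$---i.e., outside a countable set---weak convergence $V_{p,r_k}\to V_0$ together with the density identity from Step~1 gives $\|V_0\|(B_\rho)/\rho^n=\lim_k \|V\|(B_{\rho r_k}(p))/(\rho r_k)^n=\Theta_n(V,p)$. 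The left-hand side of the displayed identity therefore vanishes for a.e. $0<s<r$, which forces $y^\perp=0$ for $\|V_0\|$-a.e. $y$, the classical criterion for $\spt\|V_0\|$ to be a cone with vertex at the origin. For the final claim, under the integrality hypothesis each $V_{p,r_k}\llcorner(H\times\bbS^n)$ is integer rectifiable with locally uniformly bounded mass and first variation inside $H$, so Allard's integral compactness theorem applied in the open set $H$ transfers integrality to $V_0\llcorner(H\times\bbS^n)$. The main technical point to verify carefully is the derivation of the monotonicity formula and its equality form using only vector fields in $\TVF$; once the vanishing of the $(n+1)$-component of the radial fields on $\partial H$ is noted, the rest of the argument reduces to classical varifold technology.
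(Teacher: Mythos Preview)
Your proof is correct and follows essentially the same approach as the paper's own argument: both hinge on the observation that the radial vector field $X(x)=\varphi(|x-p|)(x-p)$ centered at $p\in\partial H$ is tangential to $\partial H$ and hence admissible in $\TVF$, then invoke Allard's (almost-)monotonicity, the scaling argument to kill $\lambda$, the constant-density-ratio criterion for the cone property, and Allard's integral compactness for the final integrality claim. Your write-up is in fact more detailed than the paper's, which simply defers to the classical references after noting the admissibility of the radial field.
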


\begin{proof} The proof of the classical (almost) monotonicity formula for varifolds with bounded mean curvature, see \cite[Theorem 5.1]{allard1972first}, is based on testing the first variation with a vector field $$X(x) = \varphi(|x-p|)(x-p)$$ for a point $p \in \spt \|V\|$ and a smooth compactly supported cutoff function $\varphi$. For a varifold $V$ as in the statement of the lemma and for $p \in \spt \|V\| \cap \partial H$, the restriction of this vector field to $\overline{H}$ lies in $\TVF$ and thus is admissible for the stationarity condition \eqref{eqn: isotropic stationary for the weighted varifold}. Thus, repeating the classical proof of Allard shows that the density ratio $\frac{\| V\|(B_r(p))}{\om_nr^n}$ is almost-monotone. This is also observed in \cite[Lemma 2.9, Remark 2.11]{nick2024regularity} and in \cite[Section 3]{wang2024allard}. Classical arguments then show that any blowup sequence for $V$ converges subsequentially to a varifold $V_0 \in \cB_0(V)$ satisfying 
\eqref{eqn: isotropic stationary for the weighted varifold} with $\lambda =0$  with $\spt \|V_0\| \subset \overline{H}$ such that the density $\frac{\| V_0\|(B_r)}{\om_nr^n}$ at the origin is constant in $r>0$. Since this density ratio is constant, further classical arguments show $\spt \| V\|$ is a cone. The final statement follows from the closure theorem for integral varifolds with bounded first variation see \cite[Theorem 6.4]{allard1972first} or \cite[Section 42.8]{simon2014introduction}.
\end{proof}

\section{Stationary sets are viscosity subsolutions {of Young's law}}\label{sec: stat to visc}
{This section is dedicated to the proof of Theorem~\ref{thm: stationary implies viscosity}.}
The proof will be obtained by a contradiction argument taking the blow-up given by Proposition~\ref{P: flat BU under fin dens} and constructing a suitable tangential vector field that increases energy.

\begin{proof}[Proof of Theorem~\ref{thm: stationary implies viscosity}]

\noindent{\it Step 1: Set up.} {Let $E$ be a set as in the theorem statement satisfying \eqref{eqn: aniso isotropic stationary}.}
Suppose by way of contradiction there is a $C^1$ domain $G \subset \R^{n+1}$ touching $E$ from the interior at $p \in \Gamma$ such that 
	\begin{align}\label{eqn: contra hp}
\nu_{G}^\Phi(p) \cdot \nu_H> -\omega \geq 0,
 \end{align}
and that $p\notin T_E$. Since $G$ is of class $C^1$, the blowup of the open set $G$ at $p$ is a half space, which we denote by $H_G$, with outer unit normal $\nu_G(p)$.   Let $W_G = \overline{H} \setminus H_G$.

By \eqref{eqn: contra hp}, we can apply Theorem~\ref{T: finiteness density} to find that $\Theta^*_n(\partial^* E, p) < +\infty$, and equivalently, $\bar{\Theta}:=\Theta^*_n(V, p)< +\infty$ for the {capillary} varifold
$$V := \var(\partial^*E \cap H)  - \frac{\omega}{\Phi(\nu_H)} \var(\partial^*E \cap \partial H)
$$
{Thus $\cB_p(V)$ is nonempty.  For any $\bar{V} \in \cB_p(V)$,  $\Theta^*_n(\bar{V}, p) \leq \bar{\Theta}$ and in particular $\bar{V}$ satisfies \eqref{A:Finite-Density}. Additionally, $\bar{V}$ satisfies \eqref{E: stat condition prop BU are 1/2planes} by scaling and the fact that $V$ satisfies \eqref{eqn:fv}. Repeating the argument of Step 0 of the proof of Proposition~\ref{P: flat BU under fin dens} and using the assumption \eqref{eqn: lower density assumption intro 1} shows that $\bar{V}$ additionally satisfies \eqref{A:V is integral in H} and \eqref{A:unif lower bound mass ratio}. Moreover, since $G\cap H\subset E$, the support of $\bar{V}$ is contained in $\partial H \cup W_G$. Up to a rotation (also replacing $\Phi$ with its rotation), we may assume  $W_G = W_\eta$, in the notation \eqref{eqn: wedge 3.2},  for $\eta:=\arccos(\nu_{G}(p) \cdot \nu_H ). $
Thus, we may apply Proposition~\ref{P: flat BU under fin dens} to an any $\bar{V} \in \cB_p(V)$ to obtain $V_0 \in \cB_0(\bar{V}) \subset \cB_p(V)$, $J\in\mathbb{N}\cup\{0\}$, $\{\eta_1,\dots, \eta_J\}\subset (0, \eta]$, and $\{\theta_1,\dots, \theta_J\}$ such that   \eqref{eqn: blowup union of planes} and \eqref{eqn:blowupwetting} hold.
} 

We {claim} that $J\neq 0$. Indeed, if $J= 0$, then in particular $\spt(\|V_0\|)\subset\partial H$. Consider the blow-up $F$ (in $L^1_{loc}$) of $E$ at {$p$} associated to the same blow-up sequence $\{r_k\}_k$ generating $V_0$. We claim that $F = H$, which then contradicts  ${p}\notin T_E$ and shows $J\neq 0$. To show that $F=H$, it suffices to prove $\partial^* F \subset \partial H$, since this implies $F$ is either empty or $H$, and $F$ cannot be empty since it contains $H_G$. By construction, $\partial^* F\setminus W\subset \partial H$. Assume by way of contradiction that there exists $x\in \partial^* F\cap W\cap H$. {Then $P(F;B_r(x)) >0$ for any $r>0$.} Since $x\in H$, we have $x\notin \spt(\|V_0\|)$ and thus $\|V_0\|(B_{r_1}(x_0)) = 0$ for some $r_1>0$. This and lower semicontinuity of the perimeter imply that $0 = \|V_0\|(B_{{r_1}}(x)) = \lim_k P(E_{0,r_k}, B_{{r_1}}(x)) \geq P(F,B_{{r_1}}(x)) > 0$. This contradiction ensures that $\partial^* F\cap W\subset \partial H$ and then $\partial^* F\subset \partial H$.

For each $i=1,\dots J$, let $\nu_i\neq \pm \nu_H$ denote the normal to $\Pi_{i}$ such that $\nu_i$ is the outer unit normal to the half space with boundary $\Pi_i$ that contains $H_G\cap H$, i.e.  $\nu_i \cdot e_n >0$. From the structure \eqref{eqn: blowup union of planes} and \eqref{eqn:blowupwetting}, 
\begin{equation}\label{E:first var V by half planes}
    0 =\delta_\Phi V_0(X)= \sum_{i=1}^J \theta_i\int_{\Pi_{i}
     \cap H}B_\Phi(\nu_i) : DX(x) \rmd \mathcal{H}^n(x) +\int_{\partial H}B_\Phi(\nu_H) : DX(x)\rmd \|V_0\|(x),
\end{equation}
for all  $X \in \TVF$. Since each $\Pi_{i}$ is contained in $W_G$, we have $\nu_{G}(p) \cdot \nu_H  \leq \nu_i \cdot \nu_H$. {Similarly, as $\nu_i \in \text{span}\{(e_n)_+, \nu_H\}$ 
by \eqref{I:contrapositive ordering Cahn-Hoffman}, we have}
$\nu^\Phi_{ G}({p}) \cdot \nu_H \leq  \nu^\Phi_i \cdot  \nu_H$. 
 This, together with \eqref{eqn: contra hp} and the assumption $\omega \leq 0$ guarantee that
\begin{equation}\label{E:contradiction-omega}
	0\geq \omega > \omega_i\qquad \mbox{  for any  }i\in \{1,\ldots, J\},
\end{equation}
where we let $\nu_i^\Phi = \nabla \Phi(\nu_i)$ and 
\begin{equation}
    \label{eqn: omega i}
    \omega_i :=- \nu^\Phi_i \cdot  \nu_H.
\end{equation}

\noindent{\it Step 2: The test vector field.} Now, let $\varphi\in C_c^{\infty}([0,+\infty))$ be a smooth cutoff function and consider the vector field $X_0 \in \TVF$ defined by
\[
X_0(x):= -\varphi(|x|)\nustar 
\]
Letting $\hat{x} := x/|x|$, direct computation shows $D X_0(x) = - \varphi^\prime(|x|)\nustar\otimes \hat{x}$. Thus, for each $i =1,\dots, J$, we have $B_\Phi(\nu_i): DX_0(x)  =  - \varphi^\prime(|x|) \hat{x} \cdot \tau_i$, where we let
\begin{equation*}
	\tau_i := \Phi(\nu_i)\nustar -  (\nustar\cdot \nu_i) \nabla\Phi(\nu_i).
\end{equation*}

{Notice that we also have $B_\Phi(\nu_H): DX_0(x)  =  - \varphi^\prime(|x|)\Phi(\nu_H) \hat{x} \cdot \nustar$ since $\nu_H\cdot\nustar = 0$.} Thus, taking $X_0$ as a test vector field in the first variation in \eqref{E:first var V by half planes}, we have 
\begin{equation*}
\begin{aligned}
    0 &= \sum_{i=1}^J \theta_i\int_{\Pi_{i} \cap H}(-\varphi^\prime(|x|))\hat{x}\cdot\tau_i \,\rmd \mathcal{H}^n(x) + \int_{\partial H}(-\varphi^\prime(|x|))\hat{x}\cdot\nustar\, \rmd \|V_0\|(x) \\
    &{\geq  \sum_{i=1}^J \theta_i\int_{\Pi_{i} \cap H}(-\varphi^\prime(|x|))\hat{x}\cdot \tau_i \,\rmd \mathcal{H}^{n}(x) - \frac{\omega}{\Phi(
    \nu_H)} \int_{\partial H\cap H_G}(-\varphi^\prime(|x|))\hat{x}\cdot \nustar\,\rmd \mathcal{H}^{n-1}}
\end{aligned}
\end{equation*}
where the inequality follows since (i) $-\varphi^\prime \geq 0$ and $\nustar\cdot \hat{x}\geq 0$ for any $x\in \partial H \setminus H_G$; and (ii) $\|V_0\|\res (H_G \cap \partial H) = -\frac{\omega}{\Phi(\nu_H)}\var(H_G \cap \partial H)$. Next, by an application of the coarea formula and setting $c_\varphi= -\int_0^1 r^{n-1}\varphi^\prime(r)\rmd r >0$, we obtain from the last displayed inequality that 
\begin{equation}\label{E:first-variation-Ani2}
\begin{aligned}
    0 \geq c_\varphi\Big( \sum_{i=1}^J \theta_i\int_{\Pi_{i} \cap H \cap \bbS^n} x\cdot \tau_i\rmd \mathcal{H}^{n-1}(x) - \frac{\omega}{\Phi(
    \nu_H)} \int_{\partial H \cap H_G\cap \bbS^n}\hat{x}\cdot \nustar\rmd \mathcal{H}^{n-1}\Big).
\end{aligned}
\end{equation}

Recall that Lemma~\ref{L:half planes are stationary} ensures that $V_i := \var(\Pi_{i}) - \frac{\omega_i}{\Phi(\nu_H)}\var(\partial H\cap H_G)$ satisfies $\delta_{\bPhi}V_i(X) =0$ for any $X\in\TVF$ where $\omega_i$ is defined in \eqref{eqn: omega i}. 

{So, evaluating $0=\delta_\Phi V_i(X_0)$, analogously applying the coarea formula, and multiplying through by $c_{\varphi}>0$,} we conclude that for every $i=1,\dots, J$,
\begin{equation}\label{E:Stationarity-Plane}
	\int_{\Pi_{i} \cap H\cap\bbS^n} \hat{x}\cdot\tau_i  \rmd \cH^n = \frac{\omega_i}{\Phi(
    \nu_H)} \int_{H_G\cap\partial H\cap\bbS^n} \hat{x}\cdot\nustar\rmd \cH^{n}.
\end{equation}
{
Applying \eqref{E:Stationarity-Plane} to  \eqref{E:first-variation-Ani2} and multiplying  through by $c_\varphi$ shows that 
\begin{equation}\label{eqn: contra}
0 \geq   \frac{\sum_{j=1}^N \theta_i \omega_i - \omega }{\Phi(\nu_H)}\int_{H_G \cap\partial H\cap\bbS^n}\hat{x}\cdot \nustar \rmd \cH^{n}.
\end{equation}
Since $ \hat{x}\cdot\nustar \leq 0$ for any $\hat{x}\in H_G \cap\partial H\cap\bbS^n$, the integral in \eqref{eqn: contra} is strictly negative, and thus $\sum \theta_i \omega_i -\omega \geq0$. Recalling that $\theta_i \in \mathbb{N},$ this contradicts \eqref{E:contradiction-omega}.
}
\end{proof}

\subsection{The hydrophilic regime}\label{R:small-contact-angle-regime}

We now explain why our techniques for the hydrophobic case cannot be generalized to treat the hydrophilic case, i.e., $\omega \in (0,1)$.
Fix $\theta_0 <\theta_1 \leq  \theta_2 \leq  \dots \leq \theta_N$.
The proof above (in the case of the area functional for simplicity) was based on the following elementary rigidity fact: if $\theta_0\geq \pi/2$ then  $\sum_{i}^N \omega_i > \omega_0$ where $\omega_i := \cos\theta_i$. This rigidity fails if $\theta_0< \pi/2$: for instance, if $N=3$ and $\theta_0<\theta_1 <\pi/2$, then we can choose $\theta_2, \theta_2$ such that $\sum_{i=1}^3 \omega_i = \omega_0$.

In this case consider $F = (H\setminus W_{\pi - \theta_1})\cup (W_{\pi-\theta_2} \setminus W_{\pi-\theta_3})$ and $V_F := \var(\partial^* F\cap H) - \omega_0 \var (\partial H\setminus W_{\pi-\theta_1})$. It is straightforward to see (cf. Lemma~\ref{L:half planes are stationary}) that $\delta V_F(X) = (\sum_{i=1}^3\omega_i - \omega_0)\delta \var (\partial H\setminus W_{\pi-\theta_1})(X) = 0$ for any $X\in \TVF$. 

 \begin{wrapfigure}{r}{0.5\textwidth}
   \begin{center}
     \includegraphics[width=0.4\textwidth]{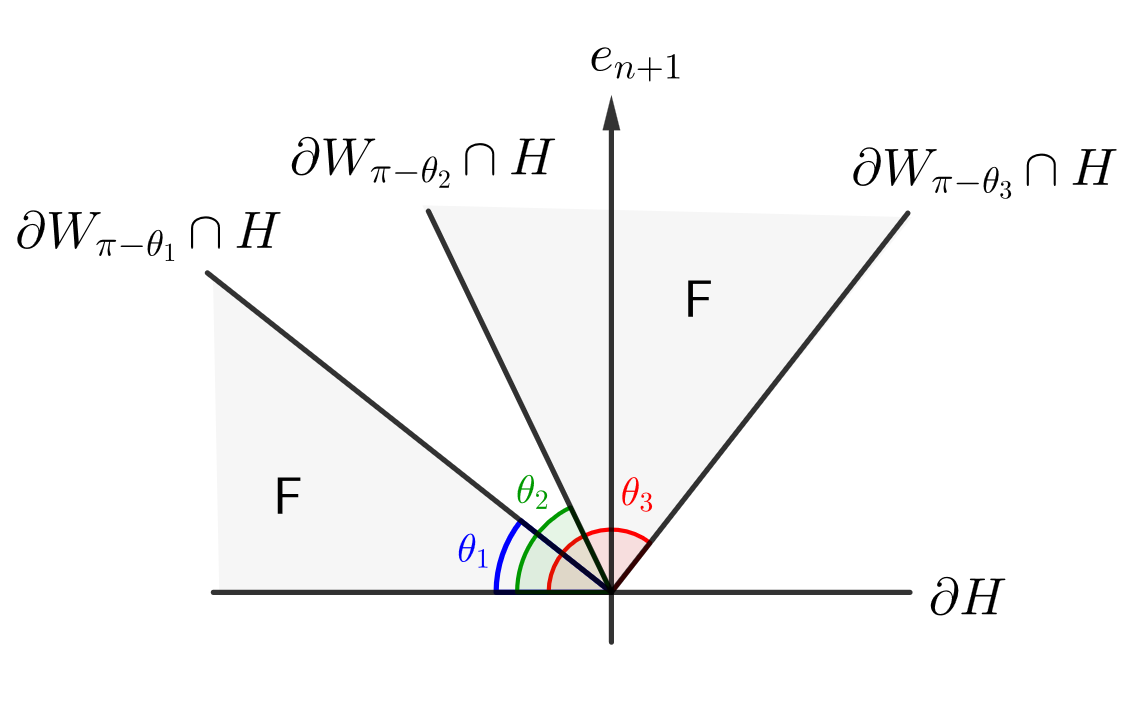}
   \end{center}  \caption{\label{fig:figure1}}
 \end{wrapfigure}
 
The key point now is that $F$ is not a viscosity sub-solution to Young's law (cf. Theorem~\ref{thm: stationary implies viscosity}), as it is clear that there exists $G$ that touches $F$ from the interior at $0$ with $\nu_G(0)\cdot \nu_H = \cos\left(\pi - \eta\right)$ for some $\eta \in (\theta_0, \theta_1)$. This shows that the argument using blowups to deduce viscosity bounds from stationarity in Theorem~\ref{thm: stationary implies viscosity} does not work in this regime.

\begin{remark}
We note that similar problems arise in recent works on the regularity of varifolds stationary for capillary energies \cite{nick2024regularity,wang2024allard}. In both cases, the authors impose assumptions that rule out examples of the type described above.
Similarly, if one adds additional assumptions to Theorem~\ref{T:Alexandrov:Isotropic}  that exclude this behavior, it would be possible to extend the proof to include the hydrophilic regime.
\end{remark}

\begin{remark}
 We observe that, up to minor modifications, the same proof of Theorem \ref{thm: stationary implies viscosity} allows to prove that, in the regime $\omega \in (0,\Phi(\nu_H))$, a set of finite perimeter satisfying \eqref{eqn: Lower Density} for some $\epsilon>0$, and that is $\bPhi_\omega$-stationary under volume-preserving variations, is a viscosity
    ``super"-solution {to Young's law}. By ``super"-solution, we mean that the condition of touching from the interior would be replaced by touching from the exterior with a set $G\supset E$ with the opposite inequality $\nu_G^\Phi \cdot \nu_H \geq -\omega$.
\end{remark}

\section{The Heintze-Karcher inequality}\label{S: HK ineq}

The main goal of this section is to prove the Heintze-Karcher inequality in Theorem~\ref{thm: HK} below. First, we need some definitions. 
Fix $\omega \in (-\Phi(\nu_H) , \Phi(\nu_H))$ and let $A \subset \overline{H}$ be a closed set.
The (anisotropic) shifted distance of a point $x \in \overline{H}$ to $A$ is 
\begin{equation*}
	\sdist^A(y) := \sup\big\{ r \geq 0: \cW_r(y+r\omega \aniEn)\cap A = \emptyset \big\},
\end{equation*}
where $\aniEn = \frac{\nabla\Phi(\nu_H)}{\Phi(\nu_H)}$.  Observe that $\sdist^A(y) = \rmd(y + \sdist^A(y)\omega\aniEn , A)$ where $\rmd(y, A) := \inf\left\{\Phi^\ast(y-x): x\in A\right \}$.
We define the {shifted (anisotropic) normal bundle} as
    $$
    N_\omega(A) \mid L:= \{(a,u)\in L\times (\partial \cW_1 -\omega \aniEn):\sdist^A(a+su)=s \mbox{ for some } s>0\},
    $$
for any $L\subset \overline{H}$. We let $N_\omega(A) := N_\omega(A)\mid A$.

We let $\reg(\rel E)$ be the set of those points $x$ in $\rel E$ such that, for some $r>0$, $\rel E \cap B_r(x)$  can be written as the graph of a $C^{2}$ function, and let $\sing(\rel E)$ be the complement of $\reg(\rel E)$ in $\rel E$.
\begin{theorem}\label{thm: HK}
Fix $\omega\in (-\Phi(\nu_H), \Phi(\nu_H))$ and let $E\subset H$ be a set of finite perimeter with finite volume. Assume that $E$ is a viscosity sub-solution for Young's law,
\begin{align}
    \cH^n\left(N_\omega(\overline{H\setminus E})\mid \left(\sing(\rel E)\cap H\right)\right) = 0, \label{A: HK normal bundles meas zero}\\
    H_E^\Phi > 0  \mbox{ on }\reg(\rel E), \mbox{ and }
	\cH^n\left(T_E\right) = 0.\label{A:HK}
\end{align}
Then 
\begin{equation}\label{eqn: HK}
	|E| \leq \frac{n}{n+1} \int_{\reg(\rel E)} \frac{\Phi(\nu_E(x)) +\omega \nu_{E}(x)\cdot \aniEn}{H_E^\Phi(x)} \rmd  \cH^n(x).
\end{equation}
\end{theorem}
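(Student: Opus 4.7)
The plan is to follow the Montiel--Ros scheme \cite{montiel1991compact}, as developed for the anisotropic capillary setting in \cite{jia2022heintze, xia2023alexandrov, jia2023alexandrov}, with the viscosity subsolution hypothesis replacing the pointwise Young's law used in earlier proofs. The idea is to parameterize $E$, up to a $\cH^{n+1}$-null set, by shifted anisotropic normal rays emanating from $\reg(\rel E)$, then estimate $|E|$ via the area formula. Concretely, for $y\in\reg(\rel E)\cap H$ and $s\ge 0$, set $F(y,s) := y - s\bigl(\nabla\Phi(\nu_E(y)) + \omega\,\aniEn\bigr)$, so that the shifted Wulff shape $\cW_s(F(y,s)+s\omega\aniEn) = \cW_s(y - s\,\nabla\Phi(\nu_E(y)))$ is tangent to $\rel E$ at $y$ from inside $E$. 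Diagonalizing $D\nu_E^\Phi$ along $T_y(\rel E)$ and using $\nu_E\cdot\nabla\Phi(\nu_E)=\Phi(\nu_E)$ gives the Jacobian
\[
|JF(y,s)| = \bigl(\Phi(\nu_E(y)) + \omega\,\nu_E(y)\cdot\aniEn\bigr)\prod_{i=1}^n\bigl(1 - s\,\kappa_i^\Phi(y)\bigr),
\]
where $\kappa_i^\Phi(y)$ are the anisotropic principal curvatures at $y$; the leading factor is strictly positive since $\Phi^*(\omega\aniEn)=|\omega|/\Phi(\nu_H)<1$ by the hypothesis on $\omega$.

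The crux of the argument is the covering claim: with $\tau(y)$ the first $s$ at which the shifted Wulff shape from $y$ touches $\overline{H\setminus E}$, show
\[
\bigl|E\setminus F(\{(y,s) : y\in\reg(\rel E)\cap H,\ 0<s\le\tau(y)\})\bigr|=0.
\]
For $\cH^{n+1}$-a.e.\ $x\in E$, take $r_x := \sdist^{\overline{H\setminus E}}(x)$ and $G_x := \cW_{r_x}(x+r_x\omega\aniEn)$. Then $G_x$ is a $C^1$ open set, $G_x\cap H\subseteq E$, and $\partial G_x$ touches $\overline{H\setminus E}$ at some $p$. The case $p\in\reg(\rel E)\cap H$ places $x = F(p,r_x)$ in the image; the case $p\in\sing(\rel E)\cap H$ contributes a $\cH^{n+1}$-null set via the area formula and \eqref{A: HK normal bundles meas zero}; the case $p\in T_E$ is null by \eqref{A:HK}. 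The remaining case is $p\in\partial H\setminus T_E$: since $E\subset H$ and $\partial H\subset\overline{H\setminus E}$, any such contact must be tangential, so $\nu_{G_x}(p)=\nu_H$ and hence $\nu_{G_x}^\Phi(p)\cdot\nu_H=\Phi(\nu_H)>-\omega$. If $p\in\Gamma\setminus T_E$, this contradicts the viscosity subsolution inequality, while if $p\in\partial H\setminus\overline{\rel E}$, one verifies that the set of apexes $x$ producing such a contact is $\cH^{n+1}$-null (these are points whose maximal shifted Wulff shape is pinned only by $\partial H$ away from any capillary boundary, and they lie on a Lipschitz graph determined by the height function $x\mapsto x_{n+1}/(\omega+\Phi(\nu_H))$).

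With the covering in place, the area formula combined with the Jacobian formula yields
\[
|E|\le\int_{\reg(\rel E)\cap H}\bigl(\Phi(\nu_E(y)) + \omega\,\nu_E(y)\cdot\aniEn\bigr)\int_0^{\tau(y)}\prod_{i=1}^n\bigl(1-s\,\kappa_i^\Phi(y)\bigr)\,ds\,d\cH^n(y).
\]
Since $H_E^\Phi(y)>0$ by \eqref{A:HK}, each factor $1-s\kappa_i^\Phi(y)$ is non-negative on $[0,\tau(y)]$ (otherwise $F(y,\cdot)$ would develop a focal singularity before $\tau(y)$, contradicting the definition of $\tau$). AM--GM yields $\prod_i(1-s\kappa_i^\Phi)\le(1-sH_E^\Phi/n)^n$, and integrating in $s$ over $[0,\tau(y)]\subseteq[0,n/H_E^\Phi(y)]$ bounds the inner integral by $n/((n+1)H_E^\Phi(y))$, producing exactly \eqref{eqn: HK}.

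The main obstacle is the covering step at contact points on $\Gamma$ and on $\partial H$: previous proofs handled these via pointwise Young's law and integration by parts on a smooth $\Gamma$, which required strong regularity. The new ingredient here is to replace these with the viscosity subsolution property combined with the explicit calibration of the shift $\omega\aniEn$ in the definition of $\sdist$: the former forces any boundary contact at a point of $\Gamma\setminus T_E$ to be incompatible with the Wulff-shape geometry (whose Cahn--Hoffman angle at $\partial H$ tangency is always $\Phi(\nu_H)>-\omega$), while the latter ensures that the remaining contacts on $\partial H\setminus\overline{\rel E}$ are negligible.
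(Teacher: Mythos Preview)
Your overall strategy is the same as the paper's: the shifted map $F$ coincides with the paper's $\zeta$, the Jacobian formula is identical, and the area-formula/AM--GM conclusion is verbatim. The gap is in the covering step at boundary contact points.

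The assertion ``$\partial H\subset\overline{H\setminus E}$'' is false. On the relative interior of the wetting region $\partial^*E\cap\partial H$, every sufficiently small half-ball $B_r(p)\cap H$ is contained in $E$, so such points are \emph{not} in $\overline{H\setminus E}$. Consequently the Wulff shape $G_x$ may protrude below $\partial H$ through the wetting region without meeting $A=\overline{H\setminus E}$, and a contact at $p\in\Gamma$ is \emph{not} tangential in general; the implication $\nu_{G_x}(p)=\nu_H$ is unjustified, and so is the value $\nu_{G_x}^\Phi(p)\cdot\nu_H=\Phi(\nu_H)$ you derive from it.

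The correct replacement is a direct computation that uses nothing but $p\in\partial H$, $x\in H$, and the specific shift: writing $c=x+r_x\omega\aniEn$, one has $\nu_{G_x}^\Phi(p)=(p-c)/r_x$, hence
\[
\nu_{G_x}^\Phi(p)\cdot\nu_H=\frac{(p-c)\cdot\nu_H}{r_x}=-\frac{c\cdot\nu_H}{r_x}=\frac{x_{n+1}}{r_x}-\omega>-\omega,
\]
using $\aniEn\cdot\nu_H=1$. This already contradicts the viscosity subsolution inequality at any $p\in\Gamma\setminus T_E$, which is exactly the conclusion you want. This is how the paper argues, and it is where the calibrated shift $\omega\aniEn$ actually does its work; your ``tangential contact'' route bypasses this and relies on a false premise.

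Finally, the extra case ``$p\in\partial H\setminus\overline{\rel E}$'' cannot occur: if $p\in A\cap\partial H$ and $p\notin\Gamma$, then a neighborhood of $p$ in $H$ lies entirely in $H\setminus E\subset A$; since $G_x\cap A=\emptyset$, the open convex body $G_x$ would then lie locally in $\{x_{n+1}\le 0\}$ near $p$, hence globally in $\{x_{n+1}<0\}$ by convexity, contradicting $x\in G_x\cap H$. So your Lipschitz-graph patch is unnecessary.
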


\begin{remark}
Property \eqref{A: HK normal bundles meas zero} is a natural qualitative regularity assumption. Indeed in Section \ref{S:proof of Alexandrov} we first introduce $(n,h)$-sets (sets with bounded mean curvature in a viscosity sense) and we show that if $\partial E\cap H$ is a $(n,h)$-set for some $h \geq 0$ and $\cH^n(\sing(\rel E)\cap H) = 0$, then \eqref{A: HK normal bundles meas zero} is satisfied.  
\end{remark}

\subsection{Anisotropic shifted normal bundle} Before proving Theorem~\ref{thm: HK}, we establish some properties of the shifted distance and shifted normal bundle. 
As above, let $\omega \in (-\Phi(\nu_H), \Phi(\nu_H))$ be fixed and let $A\subset \overline{H}$ be a closed set. 

\begin{definition}\label{D:Lusin property} We say that $N_\omega(A)$ satisfies the \emph{Lusin condition in $H$} if and only if the following holds:
$$
 \cH^n\left(N_\omega(A) \mid L \right)=0 \mbox{ for any }L \subseteq A \cap H \mbox{ with } \cH^n(L)=0.
$$    
\end{definition}

\begin{lemma}\label{L: Lusin cond - shifted vs classical}
The set
    $$
    N_\omega(A)=\left[\mathrm{id} \times (\mathrm{id} - \omega \aniEn)\right](N_0(A))
    $$
    is $n$-rectifiable. Moreover $N_\omega(A)$ satisfies the Lusin condition in $H$ if and only if $N_0(A)$ satisfies the Lusin condition in $H$.
\end{lemma}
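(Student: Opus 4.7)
The strategy is to exhibit $N_\omega(A)$ as the image of $N_0(A)$ under the Euclidean isometry $\Psi\colon \R^{n+1}\times\R^{n+1}\to \R^{n+1}\times\R^{n+1}$ defined by $\Psi(a,u) := (a, u - \omega\aniEn)$, which is a translation in the second coordinate. Once the set-theoretic equality $\Psi(N_0(A)) = N_\omega(A)$ is in place, the $n$-rectifiability of $N_\omega(A)$ follows from that of the classical anisotropic normal bundle $N_0(A)$ (a standard fact for closed sets), and the Lusin condition in $H$ transfers because $\Psi$ is measure-preserving and leaves the first coordinate fixed, giving $\Psi(N_0(A)\mid L) = N_\omega(A)\mid L$ and $\cH^n(N_\omega(A)\mid L) = \cH^n(N_0(A)\mid L)$ for every $L \subseteq A\cap H$.

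\textbf{Monotonicity of the sliding Wulff family.} The first step is to rewrite, using $1$-homogeneity of $\Phi^\ast$,
\[
\cW_r(y+r\omega\aniEn) \;=\; y + r\bigl(\cW_1 + \omega\aniEn\bigr).
\]
Combining $\Phi^\ast\circ \nabla\Phi \equiv 1$ with the $1$-homogeneity and evenness of $\Phi^\ast$ yields $\Phi^\ast(\omega\aniEn) = |\omega|/\Phi(\nu_H) < 1$, so the convex open set $K := \cW_1+\omega\aniEn$ contains the origin in its interior. A routine star-shape argument then gives $r_1K\subset r_2K$ for $0 < r_1 < r_2$ and $y + sK = \bigcup_{0<r<s}(y+rK)$. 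Consequently, $\sdist^A(y) = s$ is equivalent to the pair of conditions $\cW_s(y+s\omega\aniEn)\cap A = \emptyset$ and $\cW_r(y+r\omega\aniEn)\cap A \neq \emptyset$ for all $r > s$; specializing to $\omega = 0$ recovers the characterization $\sdist^A_0(\cdot) = \rmd(\cdot,A)$.

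\textbf{The bijection, and the main obstacle.} For $(a,u)\in A\times \partial\cW_1$, set $v := u-\omega\aniEn$; the identity $a+sv+s\omega\aniEn = a+su$ gives $\cW_s(a+sv+s\omega\aniEn) = \cW_s(a+su)$. If $(a,u)\in N_0(A)$ with witness $\rmd(a+su,A) = s$, this Wulff shape is disjoint from $A$ and so $\sdist^A(a+sv)\geq s$. For $r > s$, the triangle inequality for $\Phi^\ast$ yields
\[
\Phi^\ast\bigl(-su - (r-s)\omega\aniEn\bigr) \;\leq\; s + (r-s)\frac{|\omega|}{\Phi(\nu_H)} \;<\; r,
\]
which means $a \in \cW_r(a+sv+r\omega\aniEn)\cap A$ and forces $\sdist^A(a+sv) = s$; hence $(a,v)\in N_\omega(A)$. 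The converse is immediate from the monotonicity of the previous step together with the trivial bound $\rmd(a+su,A) \leq \Phi^\ast(-su) = s$, valid since $a\in A$. The key technical step throughout is the monotonicity of $r\mapsto \cW_r(y+r\omega\aniEn)$, which depends crucially on the bound $|\omega|<\Phi(\nu_H)$; without it the family would fail to be nested and the supremum defining $\sdist^A$ could not be pinned down by the single condition at the top radius. Once the bijection is established, everything else is a routine consequence of $\Psi$ being an isometry.
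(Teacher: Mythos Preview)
Your proof is correct and follows essentially the same approach as the paper: both establish the set-theoretic identity $N_\omega(A)=\Psi(N_0(A))$ with $\Psi(a,u)=(a,u-\omega\aniEn)$ and deduce rectifiability and the Lusin equivalence from the Lipschitz (indeed isometric) nature of $\Psi$. The only cosmetic difference is that the paper appeals directly to the identity $\sdist^A(y)=\rmd(y+\sdist^A(y)\omega\aniEn,A)$ stated just before the lemma, whereas you unpack this via the monotonicity of the family $r\mapsto\cW_r(y+r\omega\aniEn)$; this makes your argument a bit more self-contained but is not a different route.
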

\begin{proof}
The inclusion $(a, u)\in N_\omega(A)$ is equivalent to say that there exist $v\in \partial \cW_1$ and $s>0$ such that $u=v-\omega \aniEn$ and $\sdist^A(a+su)=s$.
As observed above, we have $\sdist^A(a+su) = \rmd(a+su+ s\omega\aniEn , A)=\rmd(a+sv,A)$. Hence, we deduce $\rmd_0^A(a+sv,A)=s$ which is equivalent to say that
$(a,v)\in N_0(A)$. This concludes the proof of 
$$N_\omega(A)=\left[\mathrm{id} \times (\mathrm{id} - \omega \aniEn)\right](N_0(A))$$
since $u = (\mathrm{id} - \omega \aniEn) (v)$. Since $N_0(A)$ is $n$-rectifiable, see \cite[Lemma 5.2]{de2020uniqueness}, and $\mathrm{id} \times (\mathrm{id} - \omega \aniEn)$ is smooth, we deduce also that $N_\omega(A)$ is $n$-rectifiable.
The moreover part is a consequence of the equality in the first part of the lemma, using that the map $\mathrm{id} \times (\mathrm{id} - \omega \aniEn)$ is smooth, hence Lipschitz.
\end{proof}

Let  $\bU_\omega^A$ be the set on which we have a unique nearest point projection (with respect to $\sdist^A$) onto $A$, i.e.
\begin{equation*}
	\bU_\omega^A := \left \{ x\in\R^{n+1}: \exists ! x_0 =: \bp_\omega^A(x) \in A\text{ such that }\sdist^A(x) = \rmd(x + \sdist^A(x)\omega\bar{\nu}_H^\Phi,x_0)\right\}.
\end{equation*}
We call the map $\bp_\omega^A$ the nearest point projection onto $A$.

 {It is not difficult to check that $d_\omega^A$ is Lipschitz, and thus differentiable almost everywhere.} The following lemma collects some further properties of $\sdist^A.$

\begin{lemma}\label{L:AniShifDistBasicProps}
If $y \in \R^{n+1}\setminus A$ and $x\in A$ such that $\sdist^A(y) = \Phi^\ast(y + \sdist^A(y)\omega\bar{\nu}_H^\Phi - x)$, we have:
\begin{enumerate}[\upshape (i)]
	\item\label{I:AniShifDist:homogeneity} $ \sdist^A(y+\tau(x-y)) = (1-\tau)\sdist^A(y)$ for any choice of $\tau\in (0,1]$,
    
    \item\label{I:linear distance} for any $t>0$ and $u\in\partial\cW_1 -\omega\aniEn$, if $\sdist^A(x+tu) = t$, then $x+su\in\bU_\omega^A$ and $\bp_\omega^A(x+su) = x$ for any $s\in (0,t)$.
\end{enumerate}
In addition, if $y$ is a differentiability point of $\sdist^A$, we obtain:
\begin{enumerate}[\upshape (i)]\setcounter{enumi}{2}
	\item\label{I:AniShifDist:gradient-parallel} $\nabla \sdist^A(y) $ is parallel to $(\nabla \Phi^\ast)\left( y + \sdist^A(y)\omega\aniEn -x\right)$ and
	$\nabla\Phi ({\nabla\sdist^A(y)}) = \frac{y + \sdist^A(y)\omega\aniEn -x}{\sdist^A(y)} ,$

    \item\label{E:gradient-shifted_dist} $\left({1+\sfrac{|\omega|}{\Phi(\nu_H)}}\right)^{-1} \leq \Phi(\nabla\sdist^A(y)) \leq \left({1-\sfrac{|\omega|}{\Phi(\nu_H)}}\right)^{-1}$,
    
    \item\label{I:diff implies uniq proj} $y\in \bU_\omega^A$ and $x=\bp_\omega^A(y)$.
\end{enumerate}
\end{lemma}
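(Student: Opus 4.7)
The plan is to establish (i) by a homothety argument for Wulff shapes, derive (ii) from (i) by reparametrization plus strict convexity, prove (iii) by a minimum-principle/auxiliary-function argument, and deduce (iv)--(v) as short corollaries.

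For (i), let $s := \sdist^A(y)$ and let $x \in A$ realize this distance, i.e. $\Phi^\ast(y + s\omega\aniEn - x) = s$. For $\tau \in (0,1]$ and $z := y + \tau(x-y)$, a direct computation gives $z + (1-\tau)s\omega\aniEn = (1-\tau)(y + s\omega\aniEn) + \tau x$, which identifies $\cW_{(1-\tau)s}(z + (1-\tau)s\omega\aniEn)$ as the image of $\cW_s(y + s\omega\aniEn)$ under the homothety $h$ centered at $x$ with ratio $(1-\tau) \in [0,1)$. Since every $h(p) = \tau x + (1-\tau)p$ is a strict convex combination of the boundary point $x$ and the interior point $p \in \cW_s(y + s\omega\aniEn)$, it lies in $\cW_s(y + s\omega\aniEn)$; so the smaller Wulff is contained in the larger, which is disjoint from $A$, yielding $\sdist^A(z) \geq (1-\tau)s$. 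Moreover, $x \in \partial\cW_{(1-\tau)s}(z + (1-\tau)s\omega\aniEn)$, and the triangle inequality for $\Phi^\ast$ combined with $|\omega|/\Phi(\nu_H) < 1$ places $x$ strictly inside $\cW_r(z + r\omega\aniEn)$ for every $r > (1-\tau)s$; hence $\sdist^A(z) \leq (1-\tau)s$, giving equality. For (ii), setting $y = x + tu$ in (i) unwinds to $\sdist^A(x + su) = s$ for $s = (1-\tau)t \in (0,t)$, with $x$ a realizer. Uniqueness of the projection follows from strict convexity of $\cW_1$: any second realizer $x' \in A \setminus \{x\}$ would be $h(q')$ for some $q' \in \partial\cW_t((x+tu) + t\omega\aniEn) \setminus \{x\}$, but then $h(q') = \tau x + (1-\tau)q'$ would be in the interior of $\cW_t((x+tu) + t\omega\aniEn)$---a Wulff disjoint from $A$, contradiction.

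For (iii), I would introduce the auxiliary function
\begin{equation*}
g(z) := \Phi^\ast(z + \sdist^A(z)\omega\aniEn - x) - \sdist^A(z),
\end{equation*}
with $x$ as above. The defining relation $\sdist^A(z) = \rmd(z + \sdist^A(z)\omega\aniEn, A)$ together with $x \in A$ gives $g \geq 0$ globally, while the choice of $x$ gives $g(y) = 0$, so $y$ minimizes $g$. Since $w := y + \sdist^A(y)\omega\aniEn - x \neq 0$ and $\Phi^\ast$ is $C^{2,\alpha}$ away from the origin, the assumed differentiability of $\sdist^A$ at $y$ transfers to differentiability of $g$ at $y$ via a first-order expansion, so $\nabla g(y) = 0$. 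The chain-rule computation then yields
\begin{equation*}
\nabla\Phi^\ast(w) \;=\; \bigl[1 - \omega\aniEn\cdot\nabla\Phi^\ast(w)\bigr]\,\nabla\sdist^A(y),
\end{equation*}
which is the desired parallelism. Applying $\nabla\Phi$ to both sides, using the zero-homogeneity of $\nabla\Phi$ and the identity $\nabla\Phi(\nabla\Phi^\ast(w)) = w/\Phi^\ast(w)$ (which follows from \eqref{eqn: CH map}), gives $\nabla\Phi(\nabla\sdist^A(y)) = w/\sdist^A(y)$.

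Items (iv) and (v) are then short consequences of (iii). For (iv), applying $\Phi$ to the displayed identity and using $\Phi\circ\nabla\Phi^\ast \equiv 1$ yields $\Phi(\nabla\sdist^A(y)) = [1 - \omega\aniEn\cdot\nabla\Phi^\ast(w)]^{-1}$; the Fenchel inequality together with $\Phi^\ast(\aniEn) = 1/\Phi(\nu_H)$ bounds $|\omega\aniEn\cdot\nabla\Phi^\ast(w)| \leq |\omega|/\Phi(\nu_H) < 1$, delivering the two-sided bound. For (v), two distinct realizers $x_1, x_2 \in A$ of $\sdist^A(y)$ would, via (iii) applied to each, force $(y + \sdist^A(y)\omega\aniEn - x_i)/\sdist^A(y) = \nabla\Phi(\nabla\sdist^A(y))$ for $i = 1, 2$, hence $x_1 = x_2$; this gives $y \in \bU_\omega^A$ with $\bp_\omega^A(y) = x$. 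The main technical subtlety lies in (iii): one must justify carefully that $g$ admits an honest first-order expansion at $y$ starting only from Lipschitz-differentiability of $\sdist^A$ (which works precisely because $\Phi^\ast$ is smooth at the nonzero vector $w$), and then track the chain rule to obtain the precise prefactor $[1 - \omega\aniEn\cdot\nabla\Phi^\ast(w)]$ that simultaneously governs both the direction and the magnitude of $\nabla\sdist^A(y)$.
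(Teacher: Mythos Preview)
Your proof is correct. Items (i), (ii), and (v) follow essentially the same geometric ideas as the paper (nested Wulff shapes for (i), reparametrization plus strict convexity for (ii)); your homothety framing of the uniqueness argument in (ii) is in fact somewhat cleaner than the paper's direct triangle-inequality computation.

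The genuine difference is in (iii) and, as a consequence, (iv). The paper does not introduce your auxiliary function $g$; instead it invokes an external lemma (\cite[Lemma 2.38(d)]{de2020uniqueness}) identifying $\nabla\Phi^\ast(w/\sdist^A(y))$ with $\nabla\rmd(y+\sdist^A(y)\omega\aniEn)$, and then differentiates the implicit relation $\sdist^A(y)=\rmd(y+\sdist^A(y)\omega\aniEn,A)$ to obtain the parallelism. For (iv) the paper Taylor-expands $\sdist^A(y+\tau(x-y))$ along the ray using (i) to derive the identity $1=\Phi(\nabla\sdist^A(y))-\omega\,\nabla\sdist^A(y)\cdot\aniEn$, and then applies Fenchel. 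Your route is more self-contained: the minimum-principle argument for $g$ avoids the external citation and yields the explicit prefactor $[1-\omega\,\aniEn\cdot\nabla\Phi^\ast(w)]$ directly, from which both the direction of $\nabla\sdist^A(y)$ and the two-sided bound in (iv) drop out by applying $\Phi$ and using $\Phi\circ\nabla\Phi^\ast\equiv 1$. The paper's approach has the mild advantage of connecting to the standard theory of the unshifted anisotropic distance; yours has the advantage of being a closed argument that makes the role of the shift parameter $\omega$ more transparent.
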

\begin{proof}
We will drop $A$ from the notation in this proof. 
We first show \eqref{I:AniShifDist:homogeneity}. Note that $y + t(x-y) + (1-t)\sdist(y)\omega\aniEn =: W$ satisfies
\begin{equation*}
	y + \sdist(y)\omega\aniEn + t\left(x - (y + \sdist(y)\omega\aniEn)\right) = W.
\end{equation*}

Then, by positive one-homogeneity of $\Phi^\ast$ and the last displayed equality, we readily see that $\rmd(x,W) = \Phi^\ast( x - W) = |1-t|\sdist(y)$ which, by definition of $\sdist$, leads to $ \sdist( y + t(x-y)) \leq \rmd(x,W) \leq |1-t|\sdist(y)$ for any $t \in \R$. On the other hand, by a direct computation using convexity of $\Phi^\ast$ and the definition of $\aniEn$, we have
\begin{equation*}
\begin{aligned}
	\overline{\cW_{(1-t)\sdist(y)}(y + t(x-y) + (1-t)\sdist(y)\omega\aniEn)} \subset \overline{\cW_{\sdist(y)}(y + \sdist(y)\omega\aniEn)} \mbox{ and }\\
	\cW_r\left( y + t(x-y) + r\omega\aniEn\right) \subset \cW_{(1-t)\sdist(y)}(y + t(x-y) + (1-t)\sdist(y)\omega\aniEn),
\end{aligned}
\end{equation*}
for any $t \leq 1$ and $r\in  [0, (1-t)\sdist(y))$. For any given $r\in [0, (1-t)\sdist(y))$, this immediately implies that
\begin{equation*}
	\cW_r\left( y + t(x-y) + r\omega\aniEn\right)\cap A = \emptyset.
\end{equation*}

This guarantees that $\sdist(y + t(x-y)) \geq (1-t)\sdist(y)$ which finishes the proof of \eqref{I:AniShifDist:homogeneity}. 

To prove \eqref{I:linear distance}, we proceed as follows. First, applying \eqref{I:AniShifDist:homogeneity} with $\tau=1-s$ and $y = x+tu$, we get $\sdist(x+ tsu) = st$ for any $s\in (0,1]$. Assume by contradiction that there exists $a\in A\setminus\{x\}$ such that $\Phi^\ast(x+su+\sdist(x+su)\omega\aniEn - a) = \sdist(x+su) = s$. Notice that $a\neq x+\tau u + t\omega\aniEn $ for any $\tau\in (0,t)$ otherwise we would get the following contradiction $$t = \sdist(x+tu) \leq \Phi^\ast(x+tu + t\omega\aniEn - a) = \Phi^\ast((t-\tau)u) = t-\tau < t.$$

Therefore, we have that $su$ and $x-a+tu+t\omega\aniEn$ are linearly independent for any $s\in (0,t)$. By strict convexity of $\Phi^\ast$ and recalling $u\in \partial \cW_1 - \omega\aniEn$, we obtain 
$$ t = \sdist(x+tu) \leq \Phi^\ast(x+tu+t\omega\aniEn -a) < \Phi^\ast(x-a +su + s\omega\aniEn) + \Phi^\ast((t-s)(u+\omega\aniEn)) = s + t-s = t.$$
This gives a contradiction and ensures that $x$ is the unique point with $\sdist(x+su) = s$. This concludes the proof of \eqref{I:linear distance}.

Let us prove \eqref{I:AniShifDist:gradient-parallel}, thanks to \cite[Lemma 2.38 (d)]{de2020uniqueness}, we have that 
\begin{equation}\label{E:derivativeF*}
	(\nabla \Phi^\ast)\left(  \frac{- x + y + \sdist(y)\omega\aniEn}{\sdist(y)}\right) = (\nabla \rmd)\left( y + \sdist(y)\omega\aniEn\right).
\end{equation}

By differentiating $\sdist(y) = \rmd( y + \sdist(y)\omega\aniEn, A)$, we obtain that
\begin{equation*}
\begin{aligned}
	\nabla\sdist(y) &= \nabla\rmd( y + \sdist(y)\omega\aniEn ) \\
	&= (\nabla \rmd)(y + \sdist(y)\omega\aniEn) + \omega \left( \aniEn \cdot (\nabla \rmd)(y + \sdist(y)\omega\aniEn) \right)\nabla \sdist(y) \\
	\overset{\eqref{E:derivativeF*}}{=}& (\nabla \Phi^\ast)\left(  \frac{- x + y + \sdist(y)\omega\aniEn}{\sdist(y)}\right) + \omega\left (\aniEn \cdot (\nabla \rmd)(y + \sdist(y)\omega\aniEn)\right) \nabla \sdist(y).
\end{aligned}
\end{equation*}
This and the zero-homogeneity of $\nabla\Phi^*$  clearly imply the first affirmation in \eqref{I:AniShifDist:gradient-parallel}. The second affirmation in \eqref{I:AniShifDist:gradient-parallel} follows from the latter and $\nabla \Phi\circ\nabla \Phi^* (z) = z$ for any $z\in\partial\cW_1$, see \eqref{eqn: CH map}.

Next, we prove \eqref{E:gradient-shifted_dist}. By \eqref{I:AniShifDist:gradient-parallel}, we have
\begin{equation}\label{E:y-xTaylor}
	y - x = \sdist(y) \left( \nabla\Phi(\nabla\sdist(y)) - \omega\aniEn\right).
\end{equation}
Taylor expanding $\sdist(y+t(x-y))$ at $y$ gives
\begin{equation*}
(1-t)\sdist(y) \overset{\eqref{I:AniShifDist:homogeneity}}{=} \sdist(y+t(x-y)) = \sdist(y) + t \nabla\sdist(y)\cdot( x-y) + o(t),
\end{equation*}
which rearranging and taking the limit $t\to 0$ provides
\begin{equation}\label{E:fromTaylor}
	-\sdist(y) = \nabla\sdist(y)\cdot (x-y).
\end{equation}
By inserting \eqref{E:y-xTaylor} into \eqref{E:fromTaylor} and using one-homogeneity of $\Phi$ (namely that $\Phi(z) = \nabla\Phi(z)\cdot z$), we derive
\begin{equation*}
	1 = \nabla\sdist(y) \cdot \nabla\Phi(\nabla\sdist(y)) - \omega\nabla\sdist(y)\cdot \aniEn = \Phi(\nabla\sdist(y)) - \omega\nabla\sdist(y)\cdot \aniEn.
\end{equation*} 
Applying the Fenchel inequality  to the dot product in the right-hand side and the definition of ${\aniEn}$, we obtain \eqref{E:gradient-shifted_dist}.
Item \eqref{I:diff implies uniq proj} is a direct consequence of item \eqref{I:AniShifDist:gradient-parallel}. 
\end{proof}

\subsection{Proof of the Heinze-Karcher inequality}
We now proof Theorem~\ref{thm: HK}.

\begin{proof}[Proof of Theorem~\ref{thm: HK}]

Following the proof of \cite[Proof of Theorem 1.2]{jia2023alexandrov}, we consider the mapping 
\begin{equation}\label{eqn: zeta def}
\zeta(x,t) := x + t\big( -\nu_E^\Phi(x) - {\omega} \aniEn \big),
\end{equation}
which is classically defined and differentiable on the set 
\begin{equation}\label{eqn: z def}
Z:= \big\{(x,t)\in \reg(\rel E)\times \R:  0 < t \leq {\max_i\{\kappa_i^\Phi
    (x)\}}^{-1}\big\}\,.
\end{equation}
Direct computation shows that
\begin{equation}\label{E:Jacobian zeta}
	\mathrm{J}^Z \zeta (x, t)=\big(\Phi(\nu_E(x)) +{\omega} \nu_{E}(x)\cdot \aniEn \big) \prod_{i=1}^n\big(1-t \kappa_i^\Phi(x)\big).
\end{equation}
(Note that $\Phi(z) + \omega  z\cdot \aniEn > 0$ by the Fenchel inequality since $\omega\in (-\Phi(\nu_H) ,\Phi(\nu_H))$.)
We claim that
\begin{equation}\label{E:covering-property-zeta}
	|E \setminus \zeta(Z)| = 0.
\end{equation}

Once \eqref{E:covering-property-zeta} is shown, the proof of the Heintze-Karcher inequality \cite[Proof of Theorem 1.2]{jia2023alexandrov} goes through verbatim; we recall the proof here  before proving the claim. By \eqref{E:covering-property-zeta} and the area formula, we have
\begin{equation*}
	|E| \leq \left|\zeta (Z)\right| \leq \int_Z \cH^0(\zeta^{-1}(y)) \rmd  \cH^{n+1}(y)=\int_Z \mathrm{J}^Z \zeta \, \rmd  \cH^{n+1}.
\end{equation*}
Substituting \eqref{E:Jacobian zeta} into the right-hand side then shows
\begin{equation*}
\begin{aligned}
	|E| &\leq  \int_{\reg(\rel E)} \biggl( \int_0^{{\max_i\{\kappa_i^\Phi\}}^{-1}}\big(\Phi(\nu_E) +{\omega}  \nu_{E}\cdot \aniEn \big) \, \prod_{i=1}^n(1-t \kappa_i^\Phi)\,  \rmd  t \biggr)\rmd  \mathcal{H}^n.
\end{aligned}
\end{equation*}
By the AM-GM inequality and since $\max_i\{\kappa_i^\Phi(x)\} \geq H_E^\Phi(x) / n$, we readily obtain
\begin{equation*}
\begin{aligned}
	|E| &\leq \int_{\reg(\rel E)} \biggl( \int_0^{{\max_i\left\{\kappa_i^\Phi\right\}}^{-1}}\big(\Phi(\nu_E) +{\omega} \nu_{E}\cdot \aniEn  \big) \bigg(\frac{1}{n} \sum_{i=1}^n(1-t \kappa_i^\Phi)\bigg)^n \rmd  t\biggr)\rmd \cH^n \\
	& \leq \int_{\reg(\rel E)} \big(\Phi(\nu_E) +{\omega}\nu_{E}\cdot\aniEn  \big) \biggl( \int_0^{\frac{n}{H_E^\Phi}}\bigg(1-t \frac{H_E^\Phi}{n}\bigg)^n \rmd  t\biggr)\rmd \cH^n \\
	& =\frac{n}{n+1} \int_{\reg(\rel E)} \frac{\Phi(\nu_E) +{\omega} \nu_{E}\cdot \aniEn }{H_E^\Phi} \mathrm{~d} \mathcal{H}^n .
\end{aligned}
\end{equation*}
This establishes \eqref{eqn: HK}.

It remains to show \eqref{E:covering-property-zeta}.  We set $A:= \overline{H\setminus E}$ and  let $
	\Sigma_s:=\{y \in H : \sdist^A(y)=s\}$ for each $s>0$.
By the coarea formula and the fact that $|\nabla\sdist^A|\geq c_0 = c_0(\Phi,\omega) >0$ (see Lemma~\ref{L:AniShifDistBasicProps}\eqref{E:gradient-shifted_dist}), we have
\begin{equation*} 
\begin{aligned}
	c_0 |E\setminus\zeta(Z)| \leq \int_{E\setminus\zeta(Z)} |\nabla\sdist^A(y)| &= \int_0^{+\infty} \cH^n (\Sigma_s\setminus\zeta(Z))\, \rmd s.
\end{aligned}
\end{equation*}
So, to prove \eqref{E:covering-property-zeta}, it suffices to show $\cH^n (\Sigma_s\setminus\zeta(Z))=0$ for every $s>0$. Fix $s>0$, and for $x \in \Sigma_s$, let $c = x+ s {\omega}\aniEn$ and $G = \cW_s(c)$. With this notation, $G \cap H \subset  E$, and for any $a \in \partial G \cap \rel E$ and  ${u} := s^{-1}(x-a)$, we have  $(a,{u})\in N_\omega(A)$, and $\nu_{G}^\Phi(a)=s^{-1}(a-c).$

First, let $\Sigma_s^1$ be the set of those $x \in\Sigma_s$ for which there is $a \in \partial G \cap \Gamma$, and fix any such $x$ and $a$. We claim that $a \in T_E.$ If not, then since $E$ is a viscosity subsolution for Young's law and $G$ is a $C^1$ set touching $E$ from the interior at $a$, we have
\begin{equation}\label{eqn: angle cond HK}
	\nu_G^\Phi(x)\cdot \nu_H \leq -\omega.
\end{equation}
On the other hand,  $\nu^\Phi_{G}(a) = {s}^{-1}(a-c)$ as observed above.
Thus $\nu^\Phi_{G}(a) \cdot \nu_H = -s^{-1}c\cdot \nu_H =-{s}^{-1} x\cdot \nu_H -  \frac{\omega}{\Phi(\nu_H) }\nu_H^\Phi\cdot \nu_H > -\omega$, where in the strict inequality we use $x \in H$. This contradicts \eqref{eqn: angle cond HK}. 
Thus $a \in T_E$, and in turn $x = a- (1-{\omega} \, s) \aniEn.$ In other words, $\Sigma_s^1$ is a translation of a subset of $T_E$, so
the assumption \eqref{A:RegAnis:tang touching pts} that  $\cH^n(T_E)=0$ guarantees that $\cH^n (\Sigma_s^1) = 0$.

Next, suppose $x \in \Sigma_s^2 :=\Sigma_s \setminus \Sigma_s^1$ and fix $ a\in\rel E\cap H\cap \partial G$. If $a\in\reg(\rel E)$ then since $G$ touches $E$ from the interior at $a$, we have  $s^{-1} = \kappa^{\Phi,i}_{\partial G}(a) \geq \kappa_{\partial E}^{\Phi, i}(a)$ for any $i$. Hence $(a,s)\in Z$ and therefore, $p = \zeta(a,s)\in \zeta(Z)$. Consequently, if $x \in \Sigma_s^2 \setminus \zeta(Z)$, then $a \in \sing(\rel E) \cap H$. In particular,  since $x=a+su$ with $u$ as above, this means $\Sigma_s^2 \setminus \zeta(Z)$ is contained in $(N_\omega(A)|(\sing(\rel(E)) \cap H)$. 
From assumption \eqref{A: HK normal bundles meas zero},  it follows that  $\cH^n (\Sigma_s^2\setminus\zeta(Z))=0$. Together with $\cH^n (\Sigma_s^1) = 0$, this completes the proof of \eqref{E:covering-property-zeta}. 
\end{proof}

\section{Proof of the Alexandrov Theorems}\label{S:proof of Alexandrov}
The main goal of this section is to prove Theorems~\ref{T:Alexandrov:Isotropic}, \ref{T:new theorem}, and \ref{T:Alexandrov:Anisotropic}. First, we prove some preparatory lemmas.

\begin{proposition}\label{P: Steiner}
Let $A \subseteq \overline{H}$ be closed and $r>0$. Suppose that, for every $\cH^n$-measurable bounded function $f: \overline{H} \times (\partial \cW_1 {-\omega\aniEn} )\to \R$ with compact support, there are numbers $c_1(f), \ldots, c_{n+1}(f) \in \R$ such that
\begin{equation}\label{eqn: hp steiner}
\int_{\left( H\setminus A\right)\cap \left\{x: \sdist^A(x) \leq \rho\right\}} f\left(\bp_\omega^A(x), \nabla\Phi (\nabla \sdist^A(x)){-\omega\aniEn}\right) \rmd x=\sum_{j=1}^{n+1} c_j(f) \rho^j \quad \text { for any } \rho\in (0,r) .
\end{equation}
Then 
\begin{equation}
    \label{eqn: reach LB}
    \reach(A) := \sup\left\{ s \geq 0 : \{y\in \overline{H}: \sdist^A(y) < s\}\subset \bU_\omega^A = \mathrm{dmn}(\bp_\omega^A)\right\} \geq r .
    \end{equation}
\end{proposition}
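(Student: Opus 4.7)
The plan is to argue by contradiction, exploiting the freedom to choose $f$ in \eqref{eqn: hp steiner}. Suppose that $\reach(A) < r$ and set $s_0 := \reach(A)$. By the definition of $\reach(A)$ together with a compactness/continuity argument using Lemma~\ref{L:AniShifDistBasicProps}\eqref{I:diff implies uniq proj} (which forces the complement of $\bU_\omega^A$ to be closed in the relevant bounded tube), I can find $y_0 \in \overline{H}$ with $\sdist^A(y_0) = s_0$ admitting two distinct projections $a_1, a_2 \in A$. Setting $u_i := (y_0 - a_i)/s_0$, we have $(a_i, u_i) \in N_\omega(A)$ with $u_i \in \partial \cW_1 - \omega \aniEn$. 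The test function of choice is $f_K := \chi_K$, where $K \subset N_\omega(A)$ is a small Borel neighborhood of $(a_1, u_1)$, chosen disjoint from a neighborhood of $(a_2, u_2)$.

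Consider the map $\zeta(a, u, t) := a + tu$. By Lemma~\ref{L:AniShifDistBasicProps}\eqref{I:linear distance}, applied to $(a, u)$ close to $(a_1, u_1)$ (for which $\sdist^A(a + s_0 u)$ is close to $s_0$ by continuity), the restriction $\zeta|_{K \times (0, s_0)}$ maps injectively into $\bU_\omega^A$ and satisfies $\bp_\omega^A \circ \zeta(a, u, t) = a$ and $\nabla\Phi(\nabla \sdist^A(\zeta(a,u,t))) - \omega\aniEn = u$. Using the $n$-rectifiability of $N_\omega(A)$ (Lemma~\ref{L: Lusin cond - shifted vs classical}) and the area formula, the LHS $g_{f_K}(\rho)$ of \eqref{eqn: hp steiner} satisfies
\[
g_{f_K}(\rho) = \int_K \int_0^\rho J\zeta(a, u, t) \, dt \, d\cH^n(a, u) \qquad \text{for all } \rho \in (0, s_0).
\]
Both sides are polynomial in $\rho$, and by hypothesis the LHS equals $\sum_j c_j(f_K) \rho^j$ on all of $(0, r)$; by polynomial uniqueness, the displayed identity extends to all $\rho \in (0, r)$.

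To obtain a contradiction, I would show strict inequality for some $\rho \in (s_0, r)$. For $(a, u)$ close to $(a_1, u_1)$ and $t > s_0$, the point $\zeta(a, u, t)$ either leaves $\bU_\omega^A$ or has $\bp_\omega^A(\zeta(a, u, t))$ close to $a_2$ (hence not a first coordinate of any element of $K$), because strict convexity of $\Phi^*$ combined with Lemma~\ref{L:AniShifDistBasicProps}\eqref{I:linear distance} forces $\sdist^A(\zeta(a, u, t)) < t$ as soon as $t$ crosses $s_0$. Such triples $(a, u, t)$ contribute to the right-hand integral but not to $g_{f_K}(\rho)$, and if this ``lost mass'' has positive $\cH^{n+1}$-measure the contradiction follows.

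The main obstacle is making this lost mass quantitative: one must show that
\[
\bigl\{(a, u, t) \in K \times (s_0, \rho) : J\zeta(a, u, t) > 0 \text{ and } \bp_\omega^A(\zeta(a, u, t)) \notin \{a' : (a', u') \in K\}\bigr\}
\]
has positive $\cH^{n+1}$-measure. This requires choosing $(a_1, u_1)$ at a Lebesgue-type point of $N_\omega(A)$ where an approximate tangent plane exists and $J\zeta$ is non-degenerate for small $t$; a quantitative bisector estimate using strict convexity of $\Phi^*$ to control how fast $\sdist^A(\zeta(a, u, t)) - t$ becomes negative past $s_0$, uniformly on a neighborhood; and care regarding the possible vanishing of $J\zeta$ near $t = 1/\max_i \kappa_i^\Phi$, which may require working with $|J\zeta|$ and an accounting of the multiplicity of $\zeta$. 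If $(a_1, u_1)$ itself is not a good Lebesgue point, one perturbs to a nearby good point in $N_\omega(A)$, still producing a point with distinct projections by continuity.
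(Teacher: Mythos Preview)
Your strategy---localize to a neighborhood $K$ of a single bad pair $(a_1,u_1)$ and compare the polynomial extension of $g_{f_K}$ with the area-formula expression past $\rho=s_0$---has two genuine gaps, one of which you flag yourself.

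First, the identity
\[
g_{f_K}(\rho)=\int_K\int_0^\rho J\zeta(a,u,t)\,dt\,d\cH^n(a,u)\qquad(\rho<s_0)
\]
is not justified. What the area formula actually gives (using injectivity of $\zeta$ on $\{t<r(a,u)\}$ from Lemma~\ref{L:AniShifDistBasicProps}\eqref{I:linear distance}) is
\[
g_{f_K}(\rho)=\int_K\int_0^{\min(\rho,\,r(a,u))} J\zeta(a,u,t)\,dt\,d\cH^n(a,u),
\]
and collapsing the upper limit to $\rho$ requires $r(a,u)\ge s_0$ for $\cH^n$-a.e.\ $(a,u)\in K$. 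You only know $r(a_1,u_1)\ge s_0$; the function $r(\cdot,\cdot)$ is merely upper-semicontinuous, so nearby pairs in $N_\omega(A)$ may well have $r(a,u)<s_0$, and continuity of $\sdist^A$ does not prevent this. Restricting $K$ to $\{r\ge s_0\}$ is not obviously a set of positive $\cH^n$-measure. Second, as you acknowledge, the ``lost mass'' argument for $\rho>s_0$ is not completed: you would need a uniform bisector estimate, control of where the Jacobian degenerates, and a Lebesgue-point selection, and none of this is carried out.

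The paper circumvents both issues by a different choice of test function. Rather than localizing near one bad pair, it tests with $f=\mathbf{1}_{N_{\tau,B}}$ where $N_{\tau,B}=\{(a,u)\in N_\omega(A):r(a,u)\le\tau,\ a\in B\}$ for $\tau<r$ and $B$ compact. On this set the upper limit $\min(\rho,r(a,u))$ is simply $r(a,u)\le\tau$ whenever $\rho>\tau$, so the area-formula expression---hence $g_f(\rho)$---is \emph{constant} on $(\tau,r)$. A polynomial that is constant on an interval is identically that constant, forcing $c_1=\cdots=c_{n+1}=0$ and hence $g_f(\rho)=0$ for all $\rho$. This immediately yields
\[
\big|\{x:0<\sdist^A(x)\le r,\ r(\bp_\omega^A(x),\nabla\Phi(\nabla\sdist^A(x))-\omega\aniEn)<r\}\big|=0,
\]
from which $\reach(A)\ge r$ follows by a short compactness argument. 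The point is that testing against the \emph{entire} sublevel set $\{r\le\tau\}$ turns the problem from a delicate quantitative estimate into the soft observation ``polynomial and eventually constant implies zero.''
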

\begin{proof}
The proof goes along the same lines of \cite[Theorem 5.9]{de2020uniqueness}, we present it here for completeness.
Since $A$ is fixed, in the proof we write $\sdist, p$ and $U$ in place of $\sdist^A, \bp_\omega^A(x)$, and $\bU_\omega^A$ respectively.
Let $r(a,u) := \sup\{s: \sdist(a+su)=s\}$ for any $(a,u)\in N_\omega(A)$.

First, observe that it suffices to show
\begin{equation}
    \label{eqn: claim 1}
    \big|\big\{x\in \overline{H} : 0<\sdist(x) \leq r \mbox{ and } r\big(p(x), \nabla\Phi(\nabla \sdist(x)) - \omega\aniEn \big)<r\big\}\big|=0
\end{equation}
Indeed, suppose \eqref{eqn: claim 1} holds and 
fix $x\in \overline{H}\setminus A$ with $0 < \sdist(x) < r$. By \eqref{eqn: claim 1} and the fact that $\sdist$ is differentiable a.e., we can find a sequence $x_i \in U$ converging to $x$ with 
\begin{equation}\label{eqn: smart choice of x_i}
    0 < \sdist(x_i) \leq r\qquad\mbox{ and }\qquad r\big(p_i, v_i - \omega\aniEn \big) \geq r, 
\end{equation}
where $p_i:=p(x_i)$ and $v_i:=\nabla\Phi(\nabla\sdist(x_i)) = {\sdist(x_i)}^{-1}(x_i + \sdist(x_i) \omega \aniEn - p_i)\in\partial \cW_1$ (c.f. Lemma~\ref{L:AniShifDistBasicProps}\eqref{I:AniShifDist:gradient-parallel}).
As $\{p_i\}_i\subset A$ and $\{v_i\}_i$ are bounded, we pass to a subsequence to obtain limits $p_i \to p\in A$ and $v_i \to v\in \partial \cW_1$. By continuity of $\sdist,$  we have $x = p + \sdist^A(x)(v-\omega\aniEn)$ and 
$r(p, v-\omega\aniEn ) \geq r$.
Then, since $\sdist^A(x)<r$,  Lemma~\ref{L:AniShifDistBasicProps}\eqref{I:linear distance} guarantees that $x \in U$, completing the proof of \eqref{eqn: reach LB}.

Toward proving \eqref{eqn: claim 1}, we first let 
$$S=\left\{(a, u, t) \, :\, (a, u) \in N_\omega(A),\  t \in (0,   r(a, u))\right\}. $$
and  show that
\begin{equation}\label{eqn: old claim 1 Steiner}
   |\R^{n+1} \setminus(A \cup \phi(S))|=0\,.
\end{equation}
where $\phi: N_\omega(A) \times(0, +\infty) \rightarrow \R^{n+1}$ be  defined by $\phi(a, u, t)=a+t u$. Once again, since $\sdist$ is differentiable a.e., it suffices to show the claim with $U$ in place of $\R^{n+1}.$
Note that if  $x\in U \setminus A$, then $r\big(p(x), \frac{x - p(x)}{\sdist(x)}\big) \geq \sdist^A(x)$, while if $x \in U \setminus\phi(S)$, then   $r\big(p(x), \frac{x - p(x)}{\sdist(x)}\big) \leq \sdist(x)$, and thus
\begin{equation*}
   U \setminus (A \cup \phi(S)) \subseteq \phi\left(\left\{(a, u, t)\ : \ (a, u) \in N_\omega(A),\  t=r(a, u)>0\right\}\right) .
\end{equation*}
Since $\phi$ is locally Lipschitz, to prove \eqref{eqn: old claim 1 Steiner}, it suffices to prove that, for any $K \subset N_\omega(A)$ bounded and $M\in \mathbb{N}$, 
\begin{equation}\label{eqn: reduction Steiner2}
    \mathcal{H}^{n+1}(\left\{(a, u, t) \ : \ (a, u) \in K, \ t=r(a, u) \in (0, M) \right\})=0 .
\end{equation}
To this aim, fix $K\subseteq N_\omega(A)$ bounded and $n$-rectifiable (so that $\mathcal{H}^n(K)<\infty$), $M\in\mathbb{N}$, and for every $q \in \R$,  we define the Borel set 
$$ V_q=\left\{(a, u, t+q)\ : \ (a, u) \in K,  \  t=r(a, u)\in (0, M ) \right\}.$$ 
Since $N_\omega(A)$ is $n$-rectifiable (see Lemma~\ref{L: Lusin cond - shifted vs classical}), to prove \eqref{eqn: reduction Steiner2}, it is in turn enough to prove $\mathcal{H}^{n+1}(V_0)=0.$
Observe that $V_q \cap V_p=\varnothing$ whenever $p\neq q$, therefore 
\begin{equation*}
\begin{aligned}
 +\infty>   (M+1)\cH^n(K) = \cH^{n+1}\left(K\times (0,M+1)\right) &\geq \cH^{n+1}\Big(\cup_{q\in (0,1)\cap\mathbb{Q}} V_q\Big) \\
    &= \sum_{q\in (0,1)\cap\mathbb{Q}} \cH^{n+1}(V_q) = \sum_{q\in (0,1)\cap\mathbb{Q}} \cH^{n+1}(V_0).
\end{aligned}
\end{equation*} 
It follows that $\mathcal{H}^{n+1}(V_0) = 0$, and thus \eqref{eqn: old claim 1 Steiner} holds.

Now, since  $N_\omega(A)$ is $n$-rectifiable (Lemma~\ref{L: Lusin cond - shifted vs classical}), we can choose a partition $N_\omega(A)=\bigcup_{i=1}^{\infty} N_i$ such that each $N_i$ is a $n$-rectifiable set and $\cH^n(N_i)<\infty$ (see \cite[2.1.6]{federer2014geometric}). For $i \in \mathbb{N}$, we define
$$ S_i := S \cap\left(N_i \times(0, +\infty)\right) \mbox{ and } J(a,u,t) := \sum_i J^{N_i} \phi(a,u,t),  $$
whenever the tangential Jacobian exists. For a given $g: \R^{n+1} \rightarrow \R$ non-negative Borel function with compact support contained in $\overline{H}$, noticing that $\phi |_S$ is injective by Lemma~\ref{L:AniShifDistBasicProps}\eqref{I:linear distance}, we can apply the coarea formula (\cite[3.2.22]{federer2014geometric}) to $g\circ\phi$ to find
\begin{equation}\label{E: coarea in Steiner}
\begin{aligned}
    \int_{H \setminus A} g (x)\rmd x \overset{\eqref{eqn: old claim 1 Steiner}}{=} & \int_{\phi(S)} g(x) \, \rmd x=\sum_{i=1}^{\infty} \int_{\phi\left(S_i\right)} g(x)\,  \rmd x \\
    &= \sum_{i=1}^{\infty}\int_{S_i} g(a+t u) J^{N_i}\phi(a, u, t) \, \rmd \cH^{n+1}(a, u, t)\\
    &= \sum_{i=1}^{\infty} \int_0^{\infty} \int_{N_i\cap \left\{(w, v): r(w, v)>t\right\}} g(a+t u) J^{N_i}\phi(a, u, t)\,  \rmd \cH^n(a, u)\,  \rmd t \\
    &= \int_0^{\infty}\int_{\left\{(w, v): r(w, v)>t\right\}}  g(a+t u)J(a, u, t) \, \rmd \cH^n(a, u) \, \rmd t.
\end{aligned}    
\end{equation}

Let $B \subseteq A$ be compact, $0<\tau < \rho < r$, and define $N_{\tau, B}=N_\omega(A) \cap\left\{(a, u): r(a, u) \leq \tau, a \in B\right\}$. Consider the function $g=(\mathbf{1}_{N_{\tau, B}} \circ (p, \nabla\Phi\circ\nabla\sdist-\omega\aniEn))\mathbf{1}_{\left\{w\in \overline{H}: \sdist(w) \leq \rho\right\}}$. The assumption \eqref{eqn: hp steiner} applied to $f =\mathbf{1}_{N_{\tau, B}} $ guarantee there are constants $c_1,\dots, c_{n+1}$ such that
\begin{equation}\label{eqn: assump Steiner application}
\begin{aligned}
    \sum_{j=1}^{n+1}c_j\rho^j = \int_{\left(H\setminus A\right)\cap \left\{x: \sdist(x) \leq \rho \right\}} \mathbf{1}_{N_{\tau, B}} (p(x), \nabla\Phi(\nabla\sdist(x))-\omega\aniEn)\,  \rmd x = \int_{H\setminus A}g(x) \, \rmd x.
\end{aligned}    
\end{equation}
Applying \eqref{E: coarea in Steiner} to the right-hand side and recalling $\sdist(a+tu) = t < r(a, u) \leq \tau<\rho$, this implies
\begin{equation*}
\begin{aligned}
    \sum_{j=1}^{n+1}c_j\rho^j &=  \int_0^{\infty}\int_{\left\{(w, v): \tau \geq r(w, v)>t, w\in B\right\}} J(a, u, t) \mathbf{1}_{\left\{w: \sdist(w) \leq \rho \right\}}(a+tu) \,  \rmd \cH^n(a, u)\, \rmd t\\
    &= \int_0^{\infty}\int_{\left\{(w, v): \tau \geq r(w, v)>t, w\in B\right\}} J(a, u, t) \, \rmd \cH^n(a, u) \, \rmd t,
\end{aligned}
\end{equation*}
As the right-hand side is independent of $\rho \in (\tau, r)$, and thus $c_1 = \dots, c_{n+1} = 0$.
Therefore, by \eqref{eqn: assump Steiner application}, we have
$$\big|\big\{x \in \overline{H} \, :\,  0<\sdist^A(x) \leq r\mbox{ and } (p(x), \nabla \Phi(\nabla\sdist(x))-\omega\aniEn) \in N_{\tau, B}\big\}\big|=0 . $$
Since this holds for every $0<\tau<r$ and for every compact set $B \subseteq A$, \eqref{eqn: claim 1} follows.
\end{proof}

\begin{lemma}\label{L: Hausdorff BU}
Given $A\subseteq \overline{H}$ closed and $x\in\mathbf{q}(N_\omega(A))$ where $\mathbf{q}:(a,u)\in N_\omega(A)\to a\in A$, there is no blow-up of $A$ at $x$ (in the Hausdorff topology) that coincides with $\R^{n+1}$.
\end{lemma}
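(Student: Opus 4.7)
The plan is to exploit the touching Wulff shape encoded by the hypothesis $x\in \mathbf{q}(N_\omega(A))$, and show that any Hausdorff blowup of $A$ at $x$ must avoid a fixed open ball, hence cannot be all of $\R^{n+1}$.

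First I would unpack the hypothesis: there exist $v\in \partial\mathcal{W}_1$ and $s>0$ such that, setting $u:=v-\omega\aniEn$, one has $\sdist^A(x+su)=s$. From the definition $\sdist^A(y)=\rmd(y+\sdist^A(y)\omega\aniEn,A)$ applied to $y=x+su$, the Wulff shape $\mathcal{W}_s(x+sv)$ is disjoint from $A$ and has $x$ on its boundary since $\Phi^\ast((x+sv)-x)=s\Phi^\ast(v)=s$. Now fix an arbitrary blowup sequence $r_k\searrow 0$ and set $A_k:=(A-x)/r_k$. The scaling $y\mapsto (y-x)/r_k$ sends the disjoint Wulff shape to $\mathcal{W}_{s/r_k}(sv/r_k)$, so $A_k\cap \mathcal{W}_{s/r_k}(sv/r_k)=\varnothing$ for every $k$.

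The second step is to show that, locally in Hausdorff, $\mathcal{W}_{s/r_k}(sv/r_k)$ approximates the open half-space
\[
\Pi := \{p\in \R^{n+1}: \nabla\Phi^\ast(v)\cdot p > 0\},
\]
which is nonempty since $\Phi^\ast$ is a norm and $\nabla\Phi^\ast(v)\neq 0$. Concretely, writing the Wulff shape in scale-invariant form $\{p:\Phi^\ast(v-r_k p/s)<1\}$ and using the $C^{2,\alpha}$ regularity of $\Phi^\ast$ near $v$, for any $p\in \Pi$ with $c:=\nabla\Phi^\ast(v)\cdot p>0$ a first-order Taylor expansion gives
\[
\Phi^\ast\!\left(v-\tfrac{r_k}{s}p\right) = 1-\tfrac{r_k}{s}\,c + O(r_k^2) < 1
\]
for $k$ large. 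By continuity of $\Phi^\ast$, there exists $\delta>0$ (independent of $k$, for $k$ large) such that $B_\delta(p)\subset \mathcal{W}_{s/r_k}(sv/r_k)$, hence $A_k\cap B_\delta(p)=\varnothing$.

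Finally, if $A_k\to A_\infty$ in the Hausdorff sense on compact sets, then the uniform exclusion $A_k\cap B_\delta(p)=\varnothing$ for all large $k$ passes to the limit, giving $A_\infty\cap B_{\delta/2}(p)=\varnothing$ (indeed any candidate limit point of $\{A_k\}$ in $B_{\delta/2}(p)$ would admit approximating points in $A_k$ eventually inside $B_\delta(p)$, a contradiction). Since $p\in \Pi$ witnesses a nonempty open set disjoint from $A_\infty$, we conclude $A_\infty\neq \R^{n+1}$, which is the desired statement. The only technical point worth double-checking is the locally uniform approximation of the very large Wulff shape by the half-space $\Pi$, but this reduces entirely to the elementary Taylor computation above, so I do not anticipate any substantive obstacle.
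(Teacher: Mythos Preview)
Your proof is correct, but it takes a slightly more roundabout route than the paper. You work with the single Wulff shape $\mathcal{W}_s(x+sv)$ disjoint from $A$, rescale it to $\mathcal{W}_{s/r_k}(sv/r_k)$, and then use a Taylor expansion of $\Phi^\ast$ to show these expanding Wulff shapes eventually swallow any fixed ball in the open half-space $\Pi=\{\nabla\Phi^\ast(v)\cdot p>0\}$.

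The paper instead exploits the scale-invariance built into the shifted distance: by Lemma~\ref{L:AniShifDistBasicProps}\eqref{I:linear distance}, $\sdist^A(x+\tau u)=\tau$ for every $\tau\in(0,t]$, so $\mathcal{W}_\tau(x+\tau v)\cap A=\varnothing$ for \emph{all} $\tau\in(0,t)$. Rescaling by $r_k<t$ then gives $A_{x,r_k}\cap\mathcal{W}_1(v)=\varnothing$ for every $k$, i.e.\ a \emph{fixed} open set excluded at every scale, with no asymptotic analysis needed. What your approach buys is that it never appeals to Lemma~\ref{L:AniShifDistBasicProps}; what the paper's approach buys is that it bypasses the Taylor computation and the uniformity-of-$\delta$ check entirely, since the exclusion region $\mathcal{W}_1(v)$ is literally the same for all $k$.
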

\begin{proof}
We observe that, by definition, there exists $t>0$ and $u\in\partial \cW_1 - \omega\aniEn$ such that $\sdist^A(x+tu) = t$ and, thus, by Lemma~\ref{L:AniShifDistBasicProps}\eqref{I:linear distance}, we have $\sdist^A(x+su) = s$ for any $s\in (0,t]$. This implies that
$$ A\cap \cW_{\tau}\left( x+ \tau v\right) = \varnothing, \mbox{ for any }\tau \in (0, t)\mbox{ and where } v = u + \omega\aniEn \in \partial\cW_1. $$

Otherwise, if $a \in A\cap \cW_{\tau_0}\left( x+ \tau_0 v\right)$ for some $\tau_0\in (0, t)$, it would imply $\tau_0 = \sdist^A(x+\tau_0 u) \leq \Phi^*(x+\tau_0 v - a) < \tau_0$ and we would have achieved a contradiction. 

Next, take any blowup sequence $\{r_k\}_k$ and assume without loss of generality $r_k\in (0,t)$ for any $k$. By the last displayed equation with $\tau = r_k$, we obtain
$$ A_{x,r_k} \cap \cW_1(v) = \varnothing\mbox{ for every }k\in\mathbb{N} .$$

Since $\cW_1(v)$ is open, this implies that the limit in the Hausdorff topology of $A_{x,r_k}$ cannot be the whole $\R^{n+1}$, as desired.
\end{proof}

We show how Theorem~\ref{T:Alexandrov:Anisotropic} follows by combining Theorem~\ref{thm: stationary implies viscosity} and Theorem~\ref{thm: HK}.
We will need the following definition.
\begin{definition}
    We say that $Z \subset H$ is a $(n, h)$-set in $H$ with respect to $\Phi$ if $Z$ is relatively closed in $H$ and for any open set $G \subset H$ such that $\partial G \cap H$ is smooth and $Z \subset \overline{G}$,
\[
     H^\Phi_G(p) \leq  h, \quad \mbox{ for any } p \in Z \cap \partial G \cap H.
\]
\end{definition}

\begin{proof}[Proof of Theorem~\ref{T:Alexandrov:Anisotropic}]
Let $E$ be as in Theorem~\ref{T:Alexandrov:Anisotropic}, so $E$ satisfies \eqref{eqn: aniso isotropic stationary} for some $\lambda>0$.

{\it Step 1:} 
First, we show that $E$ satisfies the assumptions of Theorem~\ref{thm: HK}. 
The density lower bound \eqref{eqn: lower density assumption intro 1} ensures that we can apply Theorem~\ref{thm: stationary implies viscosity}, and thus $E$ is a viscosity subsolution to Young's law. 
Since $E$ is satisfies \eqref{eqn: aniso isotropic stationary}, the anisotropic mean curvature $H_E^\Phi$ is equal to $\lambda$, hence positive, on $\reg(\rel E)$. This together with the assumption \eqref{A:RegAnis:tang touching pts} guarantee that \eqref{A:HK} holds.

We now show that $E$ satisfies assumption \eqref{A: HK normal bundles meas zero}. 
Let $A:= \overline{H \setminus E}\subseteq \overline{H}.$
Notice that $\mathbf{q}(N_\omega(A))\subset \partial A$ by the definition of $N_\omega(A)$. 
Next, thanks to \eqref{A:RegAnis:Intro} and Lemma~\ref{L: Hausdorff BU}, we can invoke \cite[Theorem 4.1]{santilli2025quadratic}, which relies on  Allard's $\epsilon$-regularity theorem  \cite{allard1986integrality,santilli2025quadratic}, to deduce that $\cH^n$-a.e. point of $\mathbf{q} (N_\omega(A)) {\cap H}$ is a $C^{1,\alpha}$-regular point of $\partial A {\cap H} = \partial E \cap H$ (cf. \cite[Remark 4.2]{santilli2025quadratic}). Hence, by \cite[Remark 6.3]{de2020uniqueness}, $\cH^n$-a.e. point of $\mathbf{q} (N_\omega(A)) \cap H$ is a $C^{2,\alpha^2}$-regular point of $\partial E \cap H$.

By \cite[Lemma 4.11]{de2020uniqueness},  $\partial A\cap H = \partial E\cap H$ is an $(n,\lambda)$-set in $H$ with respect to $\bPhi$.  
Thanks to Lemma~\ref{L: Lusin cond - shifted vs classical}, we deduce from \cite[Theorem 4.10 and Lemma 5.4]{de2020uniqueness} that $N_\omega (A)$ satisfies the Lusin condition in $H$.
So, by observing that 
$$N_\omega(A)\mid (\partial A \cap H\setminus \reg(\rel E))=N_\omega(A)\mid ((\mathbf{q} (N_\omega(A)) \cap H) \setminus \reg(\rel E)),$$
we conclude from the Lusin condition in $H$ that
\begin{equation}\label{eqn: Lusin in rigidity}
    \cH^n(N_\omega(A)\mid (\partial A \cap H\setminus \reg(\rel E)))=0.
\end{equation}
This shows that \eqref{A: HK normal bundles meas zero} is satisfied, and therefore Theorem~\ref{thm: HK} applies to $E$.

{\it Step 2:}
Next, choose a sequence $R_j \to \infty$ so that 
\begin{equation}\label{eqn: perimeter decay}
  R_j \, (P(E;B_{R_j +1}\setminus B_{R_j}) + |E \cap (B_{R_j +1}\setminus B_{R_j})|) \to 0;
\end{equation}
it is not difficult to show such a sequence exists by the finiteness of perimeter and volume.
Consider the vector fields 
 $X_j(x):= \varphi_j(x)x$ where $\varphi_j\in C_c^\infty (\R^{n+1}, [0,1])$ is a cutoff function with $\varphi_j \equiv 1$ in $B_{R_j}$, $\varphi_j \equiv 0$ in $\R^{n+1}\setminus B_{R_j+1}$, and $\|\nabla \varphi_j\|_{L^{\infty}} \leq 2$.
 Testing  the first variation \eqref{eqn: aniso isotropic stationary} with $X_j$, applying the divergence theorem, and passing to the limit using \eqref{eqn: perimeter decay} shows that  for any $x\in\reg(\rel E)$,     
\begin{equation*}
    H_E^\Phi(x) \equiv \lambda = \frac{n}{(n+1)|E|}\int_{\rel E} \Phi(\nu_E) + \omega \nu_E\cdot \bar{\nu}_H^\Phi\rmd \cH^n.
\end{equation*}

Substituting this into the right hand side of the Heintze-Karcher inequality \eqref{eqn: HK}, which we may apply by Step 1, we see that equality holds in each step of the proof of  Theorem~\ref{thm: HK}. In particular, {$|\zeta(Z) \setminus E| =0 $ where $\zeta$ and $Z$ are as in \eqref{eqn: zeta def} and \eqref{eqn: z def}, }  
  $\cH^0(\zeta^{-1}(y)) = 1$ for almost every $y\in E$, where $\zeta$ is the function defined in \eqref{eqn: zeta def}, and   $\kappa_i^\Phi(x)=\lambda / n$ for any $i$ and $x\in\reg(\rel E).$ 
  Consequently, letting $p=\bp_\omega^A$ and $\sdist = \sdist^A$ where $A = \overline{H\setminus E}$, the function $p(y)$ is defined, with $(p(y), d_\w(y)) =\zeta^{-1}(y)$, and lies in $\reg(\rel E)$ for a.e. $y \in E$. At any such point, $\nabla\Phi(\nabla \sdist(y)) = \frac{c(y) -p(y)}{\sdist(y)} = - \nu_E^\Phi(p(x))$  by Lemma~\ref{L:AniShifDistBasicProps}\eqref{I:AniShifDist:gradient-parallel}, where $c(y) = y + \sdist(y) \omega \aniEn$.

For $\tau\in (0,n/\lambda)$, let $Z_\tau := \reg(\rel E)\times (0, \tau]$.
 Using \eqref{eqn: Lusin in rigidity} and arguing as in the proof of Heintze-Karcher inequality (Theorem~\ref{thm: HK}), we obtain 
 \begin{equation}
     \label{eqn: Ztau cover}
|E\cap \{x: \sdist(x) \leq \tau\}\setminus \zeta(Z_\tau) | = 0.
 \end{equation}
{Since $|\zeta(Z) \setminus E| = 0$ and $Z_\tau \subset Z$, we have $|\zeta(Z_\tau) \setminus E| = 0$ as well}.
Therefore, for an $\cH^n$-measurable and bounded function $f : \overline{H} \times \partial \cW_1 - \omega \aniEn \to \R$ with compact support,  \eqref{eqn: Ztau cover} and the area formula guarantee that
\begin{equation*}
\begin{aligned}
\int_{E \cap \{y: \sdist(y) \leq \tau\}} &f(p(x), \nabla\Phi(\nabla \sdist(x)-\omega \aniEn))\, \rmd x
    = \int_{E \cap \zeta(Z_\tau)}f(p(x), \nabla\Phi(\nabla \sdist(x)) -\omega \aniEn)\, \rmd x\\
 &{=} \int_{Z_\tau}f(z,-\nu_E^\Phi(z) -\omega \aniEn)J^{Z_\tau}\zeta(z,t)\, \rmd\cH^{n+1}(z,t) \\
    &= \int_0^\tau \left( 1 - t\lambda\right)^n\rmd t\int_{\reg(\rel E)\cap H}f(z,-\nu_E^\Phi(z)-\omega \aniEn)\left(\Phi(\nu_E(z)) + \omega \nu_E^\Phi(z)\cdot \aniEn\right)\rmd\cH^n(z),
\end{aligned}
\end{equation*}
where in the last equality we used the formula for the Jacobian of $\zeta$ (see \eqref{E:Jacobian zeta}), $\kappa^\Phi_i(z)  \equiv n/\lambda$, and $\tau < n/\lambda$. 
Evaluating the integral in $t$ shows that the assumptions of Proposition~\ref{P: Steiner} hold with $r =n/\lambda$, and consequently $\reach(A) \geq n/\lambda > 0$.

{\it Step 3:}
Since $\reach(A) \geq n/\lambda$, the function $\sdist$ is differentiable on $ S:= \{y\in H:\sdist(y) < n/\lambda \}$, and   $\nabla\sdist \neq 0$ on $S$  by Lemma~\ref{L:AniShifDistBasicProps}\eqref{E:gradient-shifted_dist}.
So, for any  $r\in (0,n/\lambda)$, the set $T_r = (\sdist)^{-1}(\{r\})\cap H$ is a $C^1$-hypersurface in $H$ with $\reach(T_r)>0$. Arguing as in \cite[Lemma 5.7]{de2020uniqueness}, using that $\Phi\in C^{1,1}$ is uniformly elliptic and $\reach(T_r) >0$, we find that  $T_r$ is of class $C^{1,1}$ and without boundary in $H$.

We claim that $T_r$ has constant anisotropic mean curvature. For any $z\in T_r$, by Lemma~\ref{L:AniShifDistBasicProps}, we have $\nu_{T_r}^\Phi(z) =  -\nabla \Phi (\nabla \sdist(z)) = r^{-1}( p(z)-(z+r\omega\aniEn))$ and thus $\nu_{T_r}^\Phi(z) = \nu^\Phi_E(p(z))$. Therefore, $D(\nu_{T_r}^\Phi)_{z} = D(\nu^\Phi_E)_{p(z)}Dp_z$.

Next, let $\zeta^\prime(z) := \zeta (z,r)$, again with $\zeta$ as in \eqref{eqn: zeta def}. As observed above, $p(\zeta^\prime(z)) = z$ for any $z\in \reg(\rel E)$. Therefore, $p = (\zeta^\prime)^{-1}$ and $Dp_z = (D\zeta^\prime_z)^{-1}$. From the definition of $\zeta$, this implies
\begin{equation}\label{eqn: 2nd ff}
D(\nu_{T_r}^\Phi)_{z} = D(\nu^\Phi_E)_{p(z)}Dp_z = D(\nu^\Phi_E)_{p(z)} \left(\mathrm{Id}_{(n+1)\times (n+1)} - rD(\nu^\Phi_E)_{p(z)}\right)^{-1} = \frac{\lambda}{1 - r\frac{\lambda}{n}} \left(\begin{array}{cc}
  \text{Id}_{n \times n} & 0   \\
   0  & 0
\end{array} \right) ,
\end{equation}
where the final equality holds in  a suitable basis (see the discussion after \eqref{E:MC 0-level set}).
Thus anisotropic principal curvatures of $T_r$ are given by $\lambda_r = \sfrac{\lambda}{1 - r\frac{\lambda}{n}}$.

{\it Step 4:}
Finally, we conclude as follows. 
Let $\{T_r^k\}_k$ be the connected components of $T_r$. By \eqref{eqn: 2nd ff},  we have $D_{T_r} (\nu^\Phi_{T_r}(z) - \lambda_rz)=0$ where $D_{T_r}$ denotes the derivative restricted to the tangent space of $T_r$, so the vector field $c_k =\frac{1}{\lambda_r}( \nu^\Phi_{T_r^k}(z) - \lambda_r z)$ is constant on every connected component $T_r^k$. Since $\Phi^*\circ\nabla\Phi = 1$, 
\[ \Phi^*(z - c_k) =  \frac{1}{\lambda_r}\Phi^*(\nu^\Phi_{T_r^k}(z)) = \frac{1}{\lambda_r},\mbox{ for any }z\in T_r^k.\]

Since each $T_r^k$ has no boundary in $H$, we can use the computation above to ensure that $T_r^k$ is either an entire Wulff shape or an $\omega'$-Winterbottom shape (possibly with $\omega' \neq \omega$) in $H$, hence $T_r$ is a union thereof. 

Since $\reach(A)\geq n/\lambda >0$ and $r<n/\lambda$, we have $\bp_\omega^{A} ( T_r ) = \rel E$. Sending $r\to 0$, and using that  $E$ has  finite perimeter, we see that $E$ is a \emph{finite} union of Wulff shapes entirely contained in $H$ and $\omega'$-Winterbottom shapes (possibly with $\omega' \neq \omega$) all with the same radius $1/\lambda$. Finally, integrating by parts in \eqref{eqn: aniso isotropic stationary} shows that $\omega'=\omega.$  This concludes the proof of the theorem.
\end{proof}

Next, to prove Theorem~\ref{T:Alexandrov:Isotropic}, we need the following lemma.

\begin{lemma}\label{L:zerotouch}
    Let $\omega \in (-1/2,0]$ and let $E$ be a set of finite perimeter satisfying \eqref{eqn: isotropic stationary}. Then $\mathcal{H}^n(\Gamma)=0$. 
\end{lemma}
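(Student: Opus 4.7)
The plan is to argue by contradiction. Suppose $\cH^n(\Gamma) > 0$. The isotropic capillary varifold $V = \var(\partial^*E \cap H) - \omega\,\var(\partial^*E \cap \partial H)$ is a nonnegative Radon varifold since $\omega \leq 0$, and by \eqref{eqn:fv} it is stationary with respect to $\TVF$ with constant mean curvature $\lambda > 0$. Lemma~\ref{lemma: cone} then gives that $\Theta^n(V,p)$ exists and is finite at every $p \in \overline H$, every blowup $V_0 \in \cB_p(V)$ is a cone in $\overline H$ stationary with respect to $\TVF$, and that $V_0 \res H$ is integer rectifiable. Applying the Lebesgue differentiation theorem to the positive finite measure $\cH^n \res \Gamma$ on the $n$-plane $\partial H$, select $p \in \Gamma$ with $\lim_{r\to 0} \cH^n(\Gamma\cap B_r(p))/(\omega_n r^n) = 1$. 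Since $T_E \subset \partial^*E$ implies $T_E \cap (\Gamma\setminus T_E) = \emptyset$, the set $\Gamma$ partitions as $T_E \sqcup (\Gamma\setminus T_E)$ and at least one summand has positive $\cH^n$-measure.

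In the case $\cH^n(T_E) > 0$, strengthen the choice of $p$ to be additionally a density-one point for the $n$-rectifiable measures $\cH^n\res T_E$ on $\partial H$ and $\cH^n \res \partial^*E$ on $\R^{n+1}$ (both are $\cH^n$-a.e.\ properties in $T_E$). By rectifiability of $\partial^*E$ and the identity $\nu_E(p) = \nu_H$, the scaled boundaries $\iota_{p,r}(\partial^*E)$ converge to $\partial H$ in $\cH^n$-measure, and the density-one condition on $T_E$ forces
\[
\cH^n(\partial^*E \cap H \cap B_r(p)) = o(r^n), \qquad \cH^n(\partial^*E \cap \partial H \cap B_r(p)) = \omega_n r^n + o(r^n),
\]
so that $V_0 = -\omega\,\var(\partial H)$ and $\Theta^n(V,p) = -\omega$. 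To close the argument I would establish the lower density bound $\Theta^n(V,p) \geq 1/2$ at every $p \in \Gamma$, by reflecting the interior piece $\var(\partial^*E \cap H)$ across $\partial H$ and applying Allard's interior density lower bound to the resulting stationary varifold on $\R^{n+1}$ (after absorbing the bounded mean curvature correction). The resulting inequality $-\omega \geq 1/2$, i.e.\ $\omega \leq -1/2$, is incompatible with $\omega > -1/2$ and hence forces $\cH^n(T_E) = 0$.

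In the case $\cH^n(\Gamma\setminus T_E) > 0$, select $p$ a density-one point of $\Gamma\setminus T_E$. Since $T_E$ and $\partial H\setminus \Gamma$ have density zero at $p$, one has $\cH^n(\partial^*E\cap\partial H\cap B_r(p)) = o(r^n)$, so $V_0 \res \partial H = 0$. On the other hand, the density-one condition together with the monotonicity-based density lower bound on the stationary interior varifold $\var(\partial^*E \cap H)$ at its limit points in $\Gamma$ gives $\|V_{p,r}\|(B_\epsilon(x_0)) \geq c\,\epsilon^n$ for each $x_0 \in B_1\cap\partial H$ and $r$ small, whence $\spt V_0 \supset B_1\cap\partial H$, and by the cone structure $\spt V_0 \supset \partial H$. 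Now I would apply Solomon--White's maximum principle \cite{solomon1989strong} to $V_0 \res H$, extended by zero to $-H$: this is a stationary integer rectifiable varifold in $\R^{n+1}$ whose support touches the minimal hyperplane $\partial H$ from one side on a set of positive $\cH^n$-measure, so the rectifiable part of $V_0\res H$ must locally contain $\partial H$ with multiplicity $\geq 1$, contradicting $\|V_0\|(\partial H) = 0$.

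The main obstacles are, in order of difficulty, (a) making rigorous the lower density bound $\Theta^n(V,p) \geq 1/2$ at $p \in \Gamma$ via reflection---this is the step at which the quantitative restriction $\omega > -1/2$ enters the argument, and it must carefully account for both the bounded mean curvature of the interior varifold and the contribution of the $-\omega$-weighted wetting piece---and (b) the precise invocation of Solomon--White in the second case, ensuring that Kuratowski convergence of the scaled sets $\iota_{p,r}(\Gamma)$ to $B_1\cap\partial H$ indeed produces touching on a set of positive $\cH^n$-measure sufficient to force $\partial H$ into the rectifiable part of $V_0 \res H$.
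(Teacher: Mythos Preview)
Your Case~1 captures the heart of the paper's proof. The lower bound $\Theta^n(V,p)\geq 1/2$ that you flag as obstacle~(a) is exactly what the paper supplies by invoking the monotonicity formula of Kagaya--Tonegawa \cite{kagaya2017fixed}. That result provides a reflected density
\[
\tilde\Theta(x)=\lim_{r\to0}\frac{P(E;H\cap B_r(x))+P(E;H\cap B_r(\tilde x))-2\omega\,\cH^n(\partial^*E\cap\partial H\cap B_r(x))}{\omega_nr^n}
\]
which is well-defined and \emph{upper semicontinuous} on $\rel E$; at boundary points $\tilde\Theta=2\Theta^n(V,\cdot)$. Approximating a density-one point $d\in\Gamma$ by interior points $d_k\in\partial^*E\cap H$ (where $\tilde\Theta(d_k)=1$) and using upper semicontinuity gives $\tilde\Theta(d)\geq1$, while the direct computation $\tilde\Theta(d)=-2\omega$ (from $\Theta^n(\partial^*E\cap H,d)=0$ and $\Theta^n(\partial^*E\cap\partial H,d)=1$) yields the contradiction $\omega\leq-1/2$. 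So once you cite the correct monotonicity result, your Case~1 \emph{is} the paper's argument.

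The paper does not split cases; it works at a single density-one point $d$ of $\Gamma$ with $d\in\partial^*E$, so your Case~2 is superfluous. Your Case~2 argument also has a genuine gap: extending $V_0\llcorner H$ by zero to $\R^{n+1}$ does \emph{not} produce a stationary varifold, because capillary stationarity only controls $X\in\TVF$, and the normal component of a generic test field contributes an uncontrolled boundary first variation along $\partial H$. Solomon--White therefore does not apply as you describe it. (Reflecting $V_0\llcorner H$ across $\partial H$ does give a stationary varifold, but then the support is no longer one-sided, so the maximum principle is again inapplicable.) Drop Case~2 and instead justify that a density-one point of $\Gamma$ may be taken in $\partial^*E$; the rest of your outline then coincides with the paper's proof.
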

\begin{proof}
    Assume for the sake of contradiction that $\cH^n(\Gamma)>0$, thus we can pick $d\in \Gamma^{(1)}\cap \Gamma$, i.e., $d\in \Gamma$ is a density $1$ point of $\Gamma$ with respect to $\cH^n$. Thanks to the fact that $d\in\rel E$, we can find a sequence $\{d_k\}_{k\in\mathbb{N}}\subset \partial^* E\cap H$ converging to $d$. We claim that
\begin{enumerate}[(A)]
	\item $\Theta^n(\partial^* E\cap H, d) = 0$,
	\item $\Theta^n(\partial^* E\cap H, d_k) = 1$ for any $k$,
    \item $\Theta^n(\partial^* E\cap\partial H, d) =1$.
\end{enumerate}

Since $d_k\in\partial^* E$ for all $k$, (B) follows from finite perimeter set theory (see \cite[Corollary 15.8]{maggi2012sets}). 
Analogously, $\Theta^n(\partial^* E, d)=1$ since $d \in \partial^*E$ by assumption, and using this, it is easy to show that (C) implies (A). Moreover, since $d\in \Gamma^{(1)}$, we have  $1=\Theta^n(\Gamma,d) \leq \Theta^n(\partial E\cap\partial H, d) \leq 1$, hence (C) follows. 

In \cite[Theorem 3.2 and Corollary 5.1]{kagaya2017fixed} it is shown, as a consequence of a suitable monotonicity formula, that for $\omega\leq 0$,
at every point $x \in \rel E,$ the quantity
\[
\tilde{\Theta}(x) := \lim_{r\to 0} \frac{P(E; H \cap B_r(x) + P(E; H\cap B_r(\tilde{x})) - 2 \omega \mathcal{H}^n(B_r(x) \cap \partial^*E \cap \partial H) }{\omega_n r^n}
\]
is defined, and is upper-semicontinuous as a function of $x$. Here, $\tilde{x}$ is the reflection of $x$ across $\partial H$. Observe that for $d_k$ above, $\tilde{\Theta}(d_k) = 1$, and thus by uppersemicontinuity, $\tilde{\Theta}(d) \geq 1.$ On the other hand, by (A) and (C), we have 

\begin{equation}\label{e:Te}
	\tilde{\Theta}(d) \overset{\textup{(C)}}{=}  \lim_{r\to 0}\frac{2\per{E}{H\cap B_r(d)}}{\omega_n r^n} - 2\omega \overset{\textup{(A)}}{=} -2\omega.
\end{equation}
Since $-\omega<1/2$, this is a contradiction.
\end{proof}

\begin{proof}[Proof of Theorem~\ref{T:Alexandrov:Isotropic}]
Since we proved above Theorem~\ref{T:Alexandrov:Anisotropic}, in order to prove Theorem~\ref{T:Alexandrov:Isotropic}, we only need to prove that all assumptions in Theorem~\ref{T:Alexandrov:Anisotropic} are satisfied for the area functional and for $\omega \in (-1/2, 0]$.

First,  the fact that the topological and reduced boundary are $\cH^n$-equivalent in $H$, i.e., \eqref{A:RegAnis:Intro}, is a well-known fact that follows from density estimates, which in turn are a consequence of the monotonicity formula; see \cite[Corollary 17.18]{maggi2012sets}.
Similarly,  the uniform lower bound on the mass ratio in a neighborhood of $\Gamma$, i.e., \eqref{eqn: lower density assumption intro 1}, holds with $\e=1$ again thanks to the monotonicity formula.
Finally, the fact that $\cH^n(T_E)=0$, i.e., \eqref{A:RegAnis:tang touching pts}, is an obvious consequence of Lemma Lemma~\ref{L:zerotouch} and the fact that $T_E\subset \Gamma$. 
\end{proof}

\begin{proof}[Proof of Theorem~\ref{T:new theorem}]
    The proof follows verbatim the proof of Theorem~\ref{T:Alexandrov:Isotropic}, with the only difference being that we cannot prove that $\mathcal{H}^n(\Gamma)=0$ with the same contradiction argument in \eqref{e:Te}.
    However this is now an assumption of the theorem for $\omega \in (-1,0]$.
\end{proof}

            \bibliographystyle{siam}
            \bibliography{biblio}

\footnotesize{
Antonio De Rosa,\\
\emph{Department of Decision Sciences and BIDSA, Bocconi University, Milan, Italy,}\\
\emph{Email address:} antonio.derosa@unibocconi.it,}

\footnotesize{
Robin Neumayer,\\
\emph{Department of Mathematical Sciences, Carnegie Mellon University, Pittsburgh, PA 15213, USA,}\\
\emph{Email address:} neumayer@cmu.edu,}

\footnotesize{
Reinaldo Resende,\\
\emph{Department of Mathematical Sciences, Carnegie Mellon University, Pittsburgh, PA 15213, USA,}\\
\emph{Email address:} rresende@andrew.cmu.edu.
}
\end{document}